\newcommand*{\Z}{{\mathbb Z}}
\newcommand*{\Q}{{\mathbb Q}}
\newcommand{\timesover}[1]{\underset{#1}{\times}}
\DeclareMathOperator{\Map}{Map}
\DeclareMathOperator{\Stab}{Stab}
\DeclareMathOperator{\Sub}{Sub}
\DeclareMathOperator{\Cl}{Cl}
\newcommand{\res}{res}
\newcommand{\tr}{tr}
\newcommand{\cotr}{\underline{cotr}}
\newcommand{\indtr}{\underline{\tr}}
\newcommand*{\m}[1]{{\protect\underline{#1}}}
\newcommand*{\mM}{\m{M}}
\newcommand*{\mA}{\m{A}}
\newcommand{\cc}[1]{\mathcal #1}
\newcommand{\cO}{\cc{O}}
\newcommand{\cU}{\cc{U}}
\newcommand{\id}{\textrm{id}}
\newcommand{\Ab}{\mathcal Ab}
\newcommand{\Mackey}{\mathcal Mackey}
\newcommand{\Coeff}{\mathcal Coe \mkern-2mu f \mkern-4mu f}
\newcommand{\Mod}{\mathcal Mod}
\newcommand{\dashMod}{\mhyphen\mathcal Mod}
\newcommand{\Fin}{\mathcal Fin}
\newcommand*{\cA}{\mathcal A}
\newcommand{\SubOG}{\Sub^{\cO}\!(G)}
\newcommand{\Orb}{\mathcal Orb}
\newcommand{\AboveHMackey}[1][{[H]}]{\mackabove_{#1}\mhyphen\Mackey}
\newcommand{\OMackey}{\cO\mhyphen\Mackey}
\mathchardef\mhyphen=45
\newcommand{\HOMackey}{\langle H \rangle^{\cO} \mhyphen \Mackey}
\numberwithin{equation}{section}
\newtheorem{theorem}{Theorem}[section]
\newtheorem{lemma}[theorem]{Lemma}
\newtheorem{corollary}[theorem]{Corollary}
\newtheorem{ex}[theorem]{Example}
\newtheorem{proposition}[theorem]{Proposition}
\newtheorem*{theorem*}{Theorem}
\newtheorem*{proposition*}{Proposition}
\newtheorem*{lemma*}{Lemma}
\newtheorem{MainTheorems}{Theorem}
\theoremstyle{remark}
\newtheorem{remark}[theorem]{Remark}
\newtheorem{notation}[theorem]{Notation}
\theoremstyle{definition}
\newtheorem{definition}[theorem]{Definition}
\DeclareMathOperator{\CoInd}{\textnormal{\textsf{CoInd}}}
\DeclareMathOperator{\Ind}{\textnormal{\textsf{Ind}}}
\DeclareMathOperator{\resfun}{\textnormal{\textsf{Res}}}
\newcommand*{\IndH}[1][{[H]}]{\Ind_{#1}}
\newcommand*{\CoIndH}[1][{[H]}]{\CoInd_{#1}}
\newcommand*{\IndHO}[1][{[H]^\cO}]{\Ind_{#1}}
\newcommand*{\IndNO}[1][{[N^{\cO}]}]{\Ind_{#1}}
\newcommand*{\CoIndHO}[1][{[H]^\cO}]{\CoInd_{#1}}
\newcommand*{\restrict}[2]{\resfun^{#1}_{#2}}
\DeclareMathOperator*{\bigdoublecurlyvee}{\bigcurlyvee\mkern-15mu\bigcurlyvee}
\newcommand{\mackabove}{{\bigdoublecurlyvee}}
\newcommand{\trivialgroup}{{\{1\}}}
\newcommand{\trivgp}{\trivialgroup}
\newcommand{\Mdef}[2]{\newcommand{#1}{\relax \ifmmode #2 \else $#2$\fi}}
\Mdef{\mcU}{\mathcal{U}}
\Mdef{\bQ}{\mathbb{Q}}
\Mdef{\mcL}{\mathcal{L}}
\newcommand{\blue}[1]{{\color{blue}#1}}
\definecolor{darkgreen}{HTML}{009B55}
\newcommand{\green}[1]{{\color{darkgreen}#1}}
\title{Splitting rational incomplete Mackey functors}
\author{David Barnes}
\address{Queen's University Belfast, UK}
\email{d.barnes@qub.ac.uk}
\author{Michael A. Hill}
\address{University of Minnesota, USA}
\email{mahill@umn.edu}
\author{Magdalena K\k{e}dziorek}
\address{Radboud University Nijmegen, Netherlands}
\email{m.kedziorek@math.ru.nl}
\begin{document}

\begin{abstract}
Inspired by equivariant homotopy theory, equivariant algebra studies     generalisations of \(G\)-Mackey functors that do not have all transfer maps     (also known as induction maps), for \(G\) a finite group. These incomplete Mackey functors have interesting and subtle properties that are more complicated than classical algebra. The levels of incompleteness that occur are indexed by simple combinatorial data known as transfer systems for \(G\), which are refinements of the subgroup relation satisfying certain axioms. The aim of this paper is to generalise the Greenlees--May and Th\'{e}venaz--Webb splitting result of rational \(G\)-Mackey functors to the incomplete case. 

By calculating idempotents of the rational incomplete Burnside ring of \(G\), we find the maximal splitting of the category of rational incomplete \(G\)-Mackey functors. These splittings are determined by maps of the form \(H \to G\) in the transfer system. We give an intrinsic definition of the split pieces beyond the idempotent description in order to understand what is the minimal information needed to determine an arbitrary rational incomplete \(G\)-Mackey functor. We end the paper with a series of examples of possible splittings and illustrate how simpler transfer systems have fewer terms in the splitting but the split pieces are more complicated. 
    
\end{abstract}

\maketitle

\tableofcontents

\section{Introduction}

\subsection{Motivation} Suppose \(G\) is a finite group. 
A \(G\)-Mackey functor is a collection of abelian groups \(M(G/H)\), 
one for each subgroup \(H\) of \(G\), together with transfer, restriction, and conjugation maps. This generalises the structure of the real (or complex) representation rings \(R(H)\).  Many standard constructions in representation theory 
can naturally be extended into a \(G\)-Mackey functor. 
For example, the Burnside ring \(A(G)\) of a group \(G\) and its subgroups \(A(H)\), for \(H\subseteq G\) form
a \(G\)-Mackey functor \(\mA\). Moreover, given any \(G\)-representation \(V\), the various 
\(H\)-fixed points (\(H\)-invariants) \(V^H\) for \(H\subseteq G\) assemble to 
give a \(G\)-Mackey functor. 

The study of \(G\)-Mackey functors began with Dress \cite{Dress73}
and Green \cite{Green71}. These ideas built on the notion of 
Frobenius functors from representation theory, as described in Curtis and Reiner \cite{CR94} and Bredon's coefficient systems, \cite{Bred67}, arising from equivariant stable homotopy theory. 
The homological algebra of \(G\)-Mackey functors  is a rich and complicated subject, see
Greenlees \cite{greenprojective92},  Mart\'inez-P\'erez and Nucinkis \cite{MPN06}
or Bouc, Stancu and Webb \cite{BSW17}. The situation is simpler in the rational case where we focus on \(G\)-Mackey functors that take values in rational vector spaces instead of abelian groups. 
It was shown independently by \cite{tw90} Th\'{e}venaz and Webb, and 
\cite{gremay95} Greenlees and May that the category of rational \(G\)-Mackey
functors for a finite group \(G\) has injective and projective dimension zero. Indeed, they prove that 
rationally the category of \(G\)-Mackey functors splits via an equivalence of categories
\[
\Mackey^G_{\mathbb{Q}} \cong \prod_{(H) \leq G} \mathbb{Q}[W_G H]\textrm{-modules}
\]
where the category on the right hand side is the product over conjugacy classes of subgroups of \(G\) of modules
over the rational group ring of the Weyl group of \(H\) in \(G\), \(W_G H = N_G(H)/H\). 
This is the categorification of the 
splitting of the rational Burnside ring of \(G\)
\[
A(G) \otimes \mathbb{Q} = \mA(G/G) \otimes \mathbb{Q} \cong \prod_{(H) \leq G} \mathbb{Q}.
\]

Inspired by equivariant homotopy theory, this paper studies incomplete \(G\)-Mackey functors: \(G\)-Mackey functors with fewer transfer maps. These are parameterized by transfer systems \(\cO\), and were defined by Blumberg--Hill. 
This is the additive version of bi-incomplete \(G\)-Tambara functors
of Blumberg and Hill \cite{BH-Bi-incompleteT} and they naturally arise as homotopy groups of rational incomplete \(G\)-spectra, as we discuss below.
The subject of this paper is a part of the new and growing area of equivariant algebra, 
where the study of modules and rings is translated to the study of 
(incomplete) \(G\)-Mackey functors, thought of as modules and
(incomplete) \(G\)-Tambara functors, thought of as commutative rings.

The aim of this paper is to extend the splitting result of rational 
\(G\)-Mackey functors to the incomplete setting, a vital step in
understanding the homological algebra of categories of incomplete rational \(G\)-Mackey functors.

\subsection{Main results}
We fix a finite group \(G\) and a transfer system \(\cO\) for \(G\). 
Let \(\OMackey^G\) denote the category of \(\cO\)-incomplete \(G\)-Mackey functors, see Definition \ref{def:OMackeyFunctors}. Informally, we may think of these as 
\(G\)-Mackey functors except one only has transfers for \(K \to H\) in \(\cO\). 

The \(\cO\)-Burnside ring of \(G\), \(\mA^{\cO}(G/G)\), is the Grothendieck ring of finite \(G\)-sets
\(G/H\) such that  \(H \to G\) is in \(\cO\). This is the endomorphism ring of the symmetric monoidal unit of \(\cO\)-Mackey functors, so any idempotent here gives a natural summand of the category.
In Section \ref{sec:idempotents} we study the idempotents of the rational 
\(\cO\)-Burnside ring of \(G\), \(\mA^{\cO}(G/G) \otimes \Q\), and their relation to the transfer system \(\cO\).

\begin{MainTheorems}
Let \(\SubOG\) be the set of subgroups \(H\) of \(G\) such that \(H \to G\) is in \(\cO\)
and let \(\SubOG/G\) be the set of conjugacy classes of \(\SubOG\).
For each \((H) \in \SubOG/G\) 
there is an idempotent \(e_{[H]^{\cO}} \in \mA^{\cO}(G/G) \otimes \Q\).  Together these 
give a maximal decomposition of the unit into orthogonal idempotents
\[
1 = \sum_{(H) \in \SubOG/G} e_{[H]^{\cO}} \in \mA^{\cO}(G/G) \otimes \Q.
\] 
\end{MainTheorems}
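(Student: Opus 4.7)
My plan is to embed the rational $\cO$-Burnside ring into the classical rational Burnside ring $\mA(G/G) \otimes \Q$ via the evident inclusion of basis elements, and then identify its idempotents using the classical mark homomorphism
\[
\phi : \mA(G/G) \otimes \Q \;\xrightarrow{\sim}\; \prod_{(K) \leq G} \Q, \qquad \phi_K([G/H]) = |(G/H)^K|,
\]
which is a ring isomorphism over $\Q$. It thus suffices to pin down the image of $\mA^{\cO}(G/G) \otimes \Q$ inside $\prod_{(K) \leq G} \Q$ and show this image is a product of copies of $\Q$ indexed by $\SubOG/G$.

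The first step is to construct a canonical retraction
\[
\sigma : \Sub(G)/G \twoheadrightarrow \SubOG/G, \qquad \sigma(K) := \bigcap \{ H \in \SubOG : K \leq H \}.
\]
For $\sigma$ to be well-defined I need $\SubOG$ to be closed under finite intersection: if $H_1, H_2 \in \SubOG$ then the restriction axiom applied to $H_1 \to G$ along $H_2 \leq G$ shows $H_1 \cap H_2 \to H_2$ lies in $\cO$, and composition with $H_2 \to G$ yields $H_1 \cap H_2 \in \SubOG$. Conjugation-closure of $\cO$ makes $\sigma$ descend to conjugacy classes, and clearly $\sigma$ restricts to the identity on $\SubOG/G$, hence is surjective.

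The central computation is then that $\phi_K([G/H]) = \phi_{\sigma(K)}([G/H])$ for every $H \in \SubOG$. The count $|(G/H)^K|$ records cosets $gH$ with $g^{-1}Kg \leq H$, and in this situation $H \in \SubOG$ is a candidate in the intersection defining $\sigma(g^{-1}Kg) = g^{-1}\sigma(K)g$, forcing $g^{-1}\sigma(K)g \leq H$. The reverse containment is automatic from $K \leq \sigma(K)$, yielding the equality. Consequently the image of $\mA^{\cO}(G/G) \otimes \Q$ under $\phi$ sits inside the ``$\sigma$-diagonal'' subring
\[
R_{\cO} := \bigl\{ v \in \textstyle\prod_{(K)} \Q : v_K = v_{K'} \text{ whenever } \sigma(K) = \sigma(K') \bigr\} \;\cong\; \prod_{(H) \in \SubOG/G} \Q.
\]

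To finish I compare dimensions: $\mA^{\cO}(G/G) \otimes \Q$ has $\Q$-basis $\{[G/H] : (H) \in \SubOG/G\}$, so its dimension matches that of $R_{\cO}$, and injectivity of $\phi$ forces the image to exhaust $R_{\cO}$. The characteristic idempotent of each fibre $\sigma^{-1}([H])$ then lifts uniquely to the desired $e_{[H]^{\cO}} \in \mA^{\cO}(G/G) \otimes \Q$; orthogonality and $\sum e_{[H]^{\cO}} = 1$ are automatic from the product decomposition of $R_{\cO}$, and maximality follows because $R_{\cO}$ is already a product of fields. The main technical hurdle is the intersection-closure of $\SubOG$, which is precisely where the restriction axiom for transfer systems enters the argument.
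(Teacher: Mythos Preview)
Your proof is correct and the ingredients are exactly those of the paper: intersection-closure of \(\SubOG\) (the Meet Lemma), the retraction \(\sigma\) (the paper's \(H_{\cO}\)), the fixed-point equality \(|(G/H)^K|=|(G/H)^{\sigma(K)}|\) for \(H\in\SubOG\), and a dimension count. The only organisational difference is that the paper's primary argument (Proposition~\ref{prop: markhom}) establishes the \(\cO\)-ghost isomorphism \(\mA^{\cO}(G/G)\otimes\Q\cong\Cl^{\cO}(G;\Q)\) directly via the upper-triangular argument, and only afterwards develops your retraction and the commutative square relating \(\Cl^{\cO}\) to \(\Cl\) in order to describe the idempotents in terms of the classical ones; you instead use that square as the existence proof itself. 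Your route has the small advantage of yielding Corollary~\ref{cor:idempotentformula} (the formula \(\iota^{\cO}(e_{[H]^{\cO}})=\sum_{(K)\in[H]^{\cO}/G} e_{(K)}\)) for free, since your idempotents are \emph{defined} as characteristic functions of the fibres \(\sigma^{-1}([H])\).
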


To unpack the terms of that sum somewhat, under the inclusion map 
\[
\mA^{\cO}(G/G) \otimes \Q \to \mA(G/G) \otimes \Q
\] 
\(e_{[H]^{\cO}}\) is sent to the sum of the usual idempotents \(e_{(K)}\)
for \(K\) in \([H]^{\cO}\): the set of those \(K\) which are \emph{inseparable} from \(H\) 
up to conjugacy (see Definition \ref{def:newinseparable}).
Formulas for the idempotents \(e_{[H]^{\cO}}\) appear as 
Lemma \ref{lem:idempotentformula} and Corollary \ref{cor:idempotentformula}.

\begin{MainTheorems}
The category of rational  \(\cO\)-Mackey functors for \(G\) splits into orthogonal full subcategories
\[ \OMackey^G_\bQ \cong \mA^\cO_\bQ \dashMod \simeq \prod_{(H)\in \SubOG/G}  e_{[H]^{\cO}}\mA^\cO_\bQ \dashMod. \]
\end{MainTheorems}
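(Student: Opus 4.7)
The plan is to derive both equivalences separately: the first essentially by definition, and the second as a formal consequence of Theorem A.

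For the equivalence $\OMackey^G_\bQ \cong \mA^\cO_\bQ \dashMod$, the key observation is that $\mA^\cO_\bQ$ is the unit of the symmetric monoidal structure on $\OMackey^G_\bQ$ coming from the $\cO$-incomplete box product. In any such symmetric monoidal abelian category the unit is a commutative ring object and every object is canonically a module over it, which yields the stated identification. This is essentially tautological once the monoidal structure and the identification of its unit are in hand.

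For the second equivalence, I would invoke Theorem A, which supplies orthogonal idempotents $e_{[H]^\cO}$ in the endomorphism ring
\[ \mA^\cO(G/G)\otimes\Q = \Hom_{\OMackey^G_\bQ}(\mA^\cO_\bQ,\mA^\cO_\bQ) \]
of the unit, summing to $1$. Acting on an arbitrary $M \in \mA^\cO_\bQ\dashMod$ via the module structure, these produce pairwise orthogonal idempotent endomorphisms of $M$ that sum to the identity, whence a functorial internal direct sum decomposition
\[ M \cong \bigoplus_{(H)\in \SubOG/G} e_{[H]^\cO} M. \]
Because the idempotents live in endomorphisms of the monoidal unit, they are central, so any morphism $f\colon M \to N$ respects the decomposition and $\Hom$-sets split as a corresponding product. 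This identifies $\mA^\cO_\bQ\dashMod$ with the product of the full subcategories $e_{[H]^\cO}\mA^\cO_\bQ\dashMod$, as asserted.

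The principal technical point requiring care is the centrality of the idempotents, namely that the action of $e_{[H]^\cO}$ on $M$ and on $N$ commutes with every morphism $f\colon M \to N$. This reduces to naturality of the unitor isomorphisms witnessing the module structure over the unit, together with the observation that endomorphisms of the unit act through these unitors. Once this is verified the splitting is a formal instance of the classical ring-theoretic fact that a decomposition of a ring into orthogonal central idempotents induces a product decomposition of its module category, and the identification of each factor as modules over $e_{[H]^\cO}\mA^\cO_\bQ$ is automatic. I anticipate no substantial obstacle beyond this formality, since the deep work is already absorbed into the construction and analysis of the idempotents $e_{[H]^\cO}$ performed in Section \ref{sec:idempotents}.
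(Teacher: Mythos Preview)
Your proposal is correct and matches the paper's approach: the paper records the first equivalence as a Proposition in Section~2 (Day convolution makes \(\mA^\cO_\bQ\) the monoidal unit) and states the second as Corollary~\ref{cor:splitting O Mackey}, treating it as an immediate formal consequence of the idempotent decomposition in Theorem~A without further argument. Your unpacking of the centrality via the unitor naturality is exactly the standard reasoning the paper leaves implicit.
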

Moreover, this splitting is maximal, as can be seen by examining the rational \(\cO\)-Burnside Mackey functor \(\mA^\cO_\bQ\).

This theorem generalises the splitting method of Greenlees and May \cite[Appendix A]{gremay95}. In that reference, one can identify the split pieces in terms of modules over group rings. In our case the splitting is much more complicated and depends deeply on the structure of \(\cO\).

In Sections \ref{sec:transfersinsep} and \ref{sec:description_of_idempotent_pieces} we 
study the behaviour of inseparability classes and transfers, and construct several 
different subcategories of rational \(\cO\)-Mackey functors. By establishing adjunctions between these subcategories 
and proving an ambidexterity result for one such adjunction, we obtain an intrinsic understanding 
of the split pieces in Theorem \ref{thm:splitting2}. In Theorem \ref{thm: simplified-noconjugation} we simplify this further 
by removing duplicate information given by conjugacy. We combine and summarise those theorems to the following. 

\begin{MainTheorems}
The category \(e_{[H]^{\cO}}\mA^\cO_\bQ \dashMod \) is equivalent to the full subcategory of rational \(i^*_{N_G(H)}\cO\)-Mackey functors for \(N_G(H)\) restricted to those \(N_G(H)/K\) where \(K \subseteq H\) and which are zero on all \(K \notin [H]^{\cO}\). We call that category \(\HOMackey_\bQ\).
\end{MainTheorems}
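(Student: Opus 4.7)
The plan is to establish the final theorem by combining the two results it summarises, namely Theorem \ref{thm:splitting2} and Theorem \ref{thm: simplified-noconjugation}. Theorem B identifies $e_{[H]^{\cO}}\mA^\cO_\bQ\dashMod$ as an idempotent summand of the module category, so the remaining work splits naturally into two stages: first convert the idempotent description into an intrinsic subcategory of rational $\cO$-Mackey functors for $G$, and then descend along restriction to $N_G(H)$ to strip away the redundant conjugation data.

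For the intrinsic description (Theorem \ref{thm:splitting2}), I would define the full subcategory $\cC_{[H]^{\cO}}$ of $\OMackey^G_\bQ$ consisting of those $M$ whose value $M(G/K)$ vanishes whenever the $G$-conjugacy class of $K$ is not contained in $[H]^{\cO}$. The explicit formula for $e_{[H]^{\cO}}$ from Lemma \ref{lem:idempotentformula} and Corollary \ref{cor:idempotentformula} already shows that multiplication by the idempotent sends $\OMackey^G_\bQ$ into $\cC_{[H]^{\cO}}$. The next step is to prove that this multiplication is simultaneously the left and right adjoint of the inclusion $\cC_{[H]^{\cO}} \hookrightarrow \OMackey^G_\bQ$; the ambidexterity is ultimately a consequence of the orthogonality asserted in Theorem A, which gives an internal direct sum decomposition $M \cong e_{[H]^{\cO}}M \oplus (1-e_{[H]^{\cO}})M$ in the rational module category, so that the summand projection is at once a reflection and a coreflection. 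Unwinding this adjoint pair yields $e_{[H]^{\cO}}\mA^\cO_\bQ\dashMod \simeq \cC_{[H]^{\cO}}$.

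For the simplification (Theorem \ref{thm: simplified-noconjugation}), I would then pass to the Weyl normaliser via $\restrict{G}{N_G(H)}$. The essential combinatorial input, which I expect has been established in Sections \ref{sec:transfersinsep} and \ref{sec:description_of_idempotent_pieces}, is that every $G$-conjugacy class of subgroups in $[H]^{\cO}$ has a representative $K \subseteq H$, and that two such representatives inside $H$ are $G$-conjugate exactly when they are $N_G(H)$-conjugate. With this in hand, $\restrict{G}{N_G(H)}$ carries objects of $\cC_{[H]^{\cO}}$ to $i^*_{N_G(H)}\cO$-Mackey functors on $N_G(H)$ whose support lies in subgroups of $H$ belonging to $[H]^{\cO}$, which is precisely the defining condition of $\HOMackey_\bQ$. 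The inverse equivalence is provided by $\Ind$, with the unit and counit isomorphisms falling out of the double-coset formula once the support condition is used to discard orbits that would otherwise contribute.

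The main obstacle I foresee is the ambidexterity in the first stage. Left adjointness of idempotent multiplication is automatic once the module structure is recorded, but the coincidence with the right adjoint requires the self-duality of the rational Burnside Mackey functor and is where the rigidity of the rational Greenlees--May setup genuinely enters. After that, the descent to $N_G(H)$ is bookkeeping: align conjugation data, verify that restriction preserves the support condition, and match its inverse with induction using the combinatorial reduction to subgroups of $H$. Concatenating the two equivalences $e_{[H]^{\cO}}\mA^\cO_\bQ\dashMod \simeq \cC_{[H]^{\cO}} \simeq \HOMackey_\bQ$ then yields the stated theorem.
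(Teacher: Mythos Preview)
Your first stage contains a genuine error. You claim that multiplication by \(e_{[H]^{\cO}}\) carries \(\OMackey^G_\bQ\) into the full subcategory \(\cC_{[H]^{\cO}}\) of Mackey functors that vanish on every \(G/K\) with \(K\notin[H]^{\cO}\). This is false. Lemma~\ref{lem:idempotentrestrictzero} and Corollary~\ref{cor: Idempotents are Above H} only guarantee that \(e_{[H]^{\cO}}\mM(G/L)=0\) when \(L\) is not \emph{above} \([H]^{\cO}\); for \(L\geq[H]^{\cO}\) with \(L\notin[H]^{\cO}\) the value is typically nonzero. The very first worked example in Section~\ref{sec:examples} already exhibits this: for \(G=C_6\) and \([C_2]^{\cO}=\{C_1,C_2\}\), the summand \(e_{[C_2]^{\cO}}\mA^{\cO}_\bQ\) is nonzero at both \(C_6/C_6\) and \(C_6/C_3\). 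So \(e_{[H]^{\cO}}\mA^{\cO}_\bQ\dashMod\) does not sit inside \(\cC_{[H]^{\cO}}\) as a full subcategory, and the adjunction you describe does not exist.

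Relatedly, the ambidexterity you identify is the wrong one. That an idempotent projection \(M\mapsto eM\) is both a reflection and a coreflection onto \(eR\dashMod\) is a formal triviality valid for any idempotent in any ring; it has no content and does not buy you the equivalence you want. The paper's route is different: it first passes to \(\AboveHMackey[{[H]^{\cO}}]^G\) (Mackey functors vanishing only outside the groups \emph{above} \([H]^{\cO}\)), then restricts along the fully faithful inclusion \(\cA^{\cO}_{(H)}\hookrightarrow\cA^{\cO}\) to obtain \([H]^{\cO}\mhyphen\Mackey^G\) (Definition~\ref{def:[H]-Mackey}). The nontrivial ambidexterity is Theorem~\ref{thm:left=rightAdjoint}, which shows that the left and right Kan extensions \(\IndHO\) and \(\CoIndHO\) along that inclusion coincide rationally; this is proved by explicit double-coset computations (Propositions~\ref{prop:Ind}, \ref{prop:Coind} and Lemma~\ref{lem: Mackey Relation in Above H}), not by anything formal. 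Proposition~\ref{prop:moduleisinduction} and Lemma~\ref{lem:idempotentactionidentity} then identify \(e_{[H]^{\cO}}\)-modules with the essential image of \(\IndHO\), yielding the equivalence with \([H]^{\cO}\mhyphen\Mackey^G_\bQ\).

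Your second stage, descending to \(N_G(H)\) via the combinatorics of Proposition~\ref{prop: tombstones} and Proposition~\ref{prop: tombstone N simplification}, is essentially correct and matches the paper's Proposition~\ref{prop: <H>MackeyN}.
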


The summand \(\HOMackey_\bQ \) depends on the transfers between elements of the inseparability class of \(H\). 
When there are no such transfers, the structure is much 
simpler as we see in Theorem \ref{thm:splitting2simplified_disklike}. 
Indeed, in Subsection \ref{subsec:reducedisklike} we study the case where 
\(\cO\) is generated by a set of transfers of the form \(H \to G\) for varying \(H\).
These are known as \emph{disk-like transfer systems} and for such a system there 
are no transfers in any inseparability class.
Corollary \ref{cor: simplified-[H]Mackey for disklike} gives the following result, where 
\(\Coeff_{\langle H\rangle^{\cO},\bQ}^N\) is the category of contravariant functors from 
the full subcategory of the orbit category for \(N=N_G(H)\) spanned by 
\(\{N/K \mid K\in [H]^{\cO}, K\subseteq H \}\) with values in \(\bQ\)-modules. 

\begin{MainTheorems} Let \(\cO\) be a disk-like transfer system. There is an equivalence of categories
     \[
        \OMackey_\bQ^G \cong \prod_{(H)\in \SubOG/G} \Coeff_{\langle H \rangle^{\cO},\bQ}^{N_G(H)}.
    \]
\end{MainTheorems}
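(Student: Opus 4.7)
The plan is to combine Theorems B and C with the structural property of disk-like transfer systems. Theorem B yields a product decomposition
\[ \OMackey^G_\bQ \simeq \prod_{(H)\in \SubOG/G} e_{[H]^{\cO}}\mA^\cO_\bQ \dashMod, \]
and Theorem C further identifies each summand with \(\HOMackey_\bQ\), the full subcategory of rational \(i^*_{N_G(H)}\cO\)-Mackey functors on \(N = N_G(H)\) supported on orbits \(N/K\) with \(K \subseteq H\) and \(K \in [H]^{\cO}\). Thus the whole theorem reduces to producing, for each conjugacy class \((H)\), an equivalence
\[ \HOMackey_\bQ \simeq \Coeff^{N_G(H)}_{\langle H\rangle^{\cO}, \bQ}. \]

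The key input is the assertion, proved in Subsection~\ref{subsec:reducedisklike}, that a disk-like transfer system contains no non-identity transfer between distinct members of any inseparability class. I would first make this explicit at the level of the restricted system: whenever \(K, K' \in [H]^{\cO}\) with \(K \subsetneq K' \subseteq H\), the relation \(K \to K'\) is not in \(i^*_{N}\cO\). This uses that the generators of \(\cO\) have the form \(L \to G\) together with a careful bookkeeping of the closure axioms (composition, conjugation, restriction), ruling out transfers between proper subgroups that could arise from restrictions of generators.

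Once that vanishing is in place, an object of \(\HOMackey_\bQ\) carries only restriction and conjugation maps between the permitted orbits \(N/K\), so it is exactly a contravariant functor on the full subcategory of \(\Orb(N_G(H))\) spanned by those orbits, that is, an object of \(\Coeff^{N_G(H)}_{\langle H\rangle^{\cO}, \bQ}\); morphisms reduce in the same way to natural transformations of such functors. Chaining this equivalence through Theorems B and C produces the stated product decomposition. The main obstacle is the first combinatorial step: rigorously verifying that disk-likeness forces the vanishing of all transfers inside an inseparability class, since the closure axioms can a priori generate unexpected arrows from the generating transfers \(L \to G\). Everything else is a bookkeeping consequence of the earlier structural theorems.
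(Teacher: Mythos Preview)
Your proposal is correct and follows essentially the same route as the paper. The paper proves this as Theorem~\ref{thm:splitting2simplified_disklike} by combining Theorem~\ref{thm:splitting2} with Corollary~\ref{cor: simplified-[H]Mackey for disklike}, the latter being available because Proposition~\ref{prop:no_transfers_within_class_for_disklike} shows disk-like transfer systems have no transfers inside any inseparability class; the only cosmetic difference is that the paper reduces \([H]^{\cO}\mhyphen\Mackey^G\) directly to \(\Coeff_{\langle H\rangle^{\cO}}^{N}\) via the orbit-category equivalence of Proposition~\ref{prop: tombstone N simplification}, while you first pass through \(\langle H\rangle^{\cO}\mhyphen\Mackey^{N}\) using Theorem~C before collapsing to coefficient systems. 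Your ``main obstacle'' is not an obstacle: the combinatorial vanishing you flag is exactly Proposition~\ref{prop:no_transfers_within_class_for_disklike}, already established via Lemma~\ref{lem: disklike generation}.
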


The classical case of rational complete Mackey functors uses the transfer system \(\cO\) consisting of every subgroup inclusion. This transfer system is disk-like and the above splitting is precisely that of Greenlees--May. Indeed, the inseparability classes are exactly conjugacy classes and we recognise \(\Coeff_{\langle H \rangle^{\cO},\bQ}^{N_G(H)}\) as the category of rational modules with an action of \(W_G H\), the Weyl group of \(H\) in \(G\).

We conclude the paper with a series of examples for \(C_6\) and \(C_8\) that illustrate our results. We are able to study these splittings in more detail, as in the abelian case the results simplify substantially. Our examples show that the simpler the transfer system, the fewer terms there are in the splitting and the more complicated the structure of the split pieces. 

While we work rationally for simplicity, one only needs to work over a ring where the order of \(G\), \(|G|\), is a unit.  For integral calculations, one could combine our results with the method of Liu \cite{Liu23}.  A further generalisation would be to consider modules over general Green functors, see Bouc, Dell’Ambrogio and Martos \cite{GeneralGM2024} for splittings in the complete case.

\subsection{Connection to equivariant stable homotopy theory}

The categories of \(G\)-Mackey functors and \(G\)-spectra (\(G\) a finite group) are well-known to be deeply related. 
For example, the equivariant homotopy groups of a genuine \(G\)-spectrum \(X\) determine a \(\mathbb{Z}\)-graded \(G\)-Mackey functor:
\(G/H \mapsto \pi_*^H(X)\). 
Conversely, given any \(G\)-Mackey functor \(\mM\), one can make an equivariant Eilenberg-Mac Lane spectrum \(H\mM\), 
which has homotopy groups concentrated in degree zero and satisfies \(\pi_0^H(H\mM) = \mM(G/H)\).
When one works with \emph{rational} \(G\)-Mackey functors and \emph{rational} \(G\)-spectra, this relation becomes 
even stronger, in that rational \(G\)-Mackey functors provide an ``algebraic model'' for rational \(G\)-spectra, 
see \cite[Appendix A]{gremay95},  work of the first author \cite{barnesfinite},
and work of the third author \cite{KedziorekExceptional}. 

Recent work of Blumberg and the second author \cite{BH_Equiv_homot_theory, BH-Bi-incompleteT}
has focused on \(G\)-spectra that are not genuine, but also are not na\"ive (here by \emph{na\"ive} we mean spectra with a \(G\)-action). 
Instead, one takes an \(N_\infty\)-operad 
and uses this to define the \emph{additive} structure on \(G\)-spectra. 
The resulting homotopy category of \emph{incomplete} \(G\)-spectra has transfer maps and Wirthm\"uller isomorphisms 
according to the transfer system \(\cO\) associated to the chosen \(N_\infty\)-operad, see \cite{BH_Norms, Rubin, BonventrePereira,  GutierrezWhite}. 
In particular, the homotopy groups of such an incomplete \(G\)-spectrum naturally form an \(\cO\)-Mackey functor.
Hence, calculations in the \(\cO\)-incomplete equivariant stable homotopy category,
such as spectral sequence calculations, will need an understanding of the category of \(\cO\)-Mackey functors. 
In particular, the homotopy groups of the sphere spectrum are given by the \(\cO\)-Burnside Mackey functor of \(G\), \(\mA^\cO\).

These categories of incomplete \(G\)-spectra are less exotic than they may seem at first. 
In the case of the equivariant little disks \(N_\infty\)-operad 
for \(\cU\) an incomplete \(G\)-universe, 
the corresponding homotopy category of \(G\)-spectra is modelled by the somewhat familiar 
category of orthogonal \(G\)-spectra indexed on \(\cU\).
These homotopy categories for incomplete universes appear in work of Lewis \cite{Lewis00splitting} and 
work of Yavuz \cite{Yavuz25} on \(C_2\)-equivariant orthogonal calculus.
Another example of work that uses \(G\)-spectra indexed on incomplete universe is Manolescu's Pin(2)-equivariant approach to the \(11/8\)-conjecture \cite{Manolescu}. When restricted to \(C_4\subset S^1\subset \text{Pin(2)}\) this gives \(C_4\)-spectra indexed on an incomplete (but not trivial) universe.

Finally we note that the splitting results and intrinsic description of the split pieces provide a model for
the ongoing work by Bohmann and the authors on splitting the category of rational \(\cO\)-incomplete spectra and studying the resulting components. 
The lifting of our algebra-first approach to the topological setting is made concrete by  
the description of incomplete \(G\)-spectra as incomplete spectral Mackey functors of Smith \cite{Smith_thesis}.

\subsection{Notation}
Let \(\Sub(G)\)\index{sub@\(\Sub(G)\)} denote the poset of subgroups of \(G\). Let \(\Fin^G\)\index{finite@\(\Fin^G\)} denote the category of finite \(G\)-sets and \(G\)-equivariant maps, and let \(\Orb^G\)\index{@orb\(\Orb^G\)} denote the orbit category of \(G\). 

Throughout the paper \(\cO\)\index{operad@\(\cO\)} denotes a transfer system for \(G\), see Definition \ref{def:transfersystem}.
At the end we include Index of notation used in the paper.

\subsection{Acknowledgements}
The authors are grateful to Hausdorff Research Institute for Mathematics in Bonn funded by the Deutsche Forschungsgemeinschaft (DFG, German Research Foundation) under Germany's Excellence Strategy - EXC-2047/1 - 390685813 and to the organisers of the trimester program \emph{Spectral Methods in Algebra, Geometry, and Topology} 2022. 
The second author was supported  by the National Science Foundation under Grant No. 2105019 \& 2528364.
The third author was supported by the Nederlandse Organisatie voor Wetenschappelijk Onderzoek (Dutch Research Council) Vidi grant no VI.Vidi.203.004. The authors thank the anonymous referees for their useful suggestions.

\section{Background}
\subsection{Transfer systems and admissible sets}
We begin this section by recalling the definition of a transfer system after \cite{RubinDetecting}, \cite{BarnesBalchinRoitzheim}.

\begin{definition}\label{def:transfersystem}
    Let \(G\) be a finite group. A \emph{\(G\)-transfer system} \(\cO\)\index{operad@\(\cO\)} is a partial order on \(\Sub(G)\) denoted by arrows \(\to\), which refines the subset relation, and which is closed under 
    \begin{itemize}
        \item conjugation: if \(K\to H\), then 
        \((gKg^{-1}) \to (gHg^{-1})\) for every
group element \(g \in G\), and
        \item restriction: if \(K\to H\) and \(L \subseteq H\), then \((K \cap L) \to L\).
    \end{itemize}
 If  \(K\to H\), then we say that \(H/K\) is \(\cO\)-\emph{admissible}.
\end{definition}

\begin{remark}
We thank the anonymous referees for pointing out that the definition of a \emph{Mackey system} by Bley and Boltje,  \cite[Definition 2.1]{BB04}, is equivalent to the definition of a transfer system when \(G\) is a finite group. This much earlier reference was unknown to the authors.

Mackey systems were defined for any group \(G\) and Bley and Boltje use them to construct generalisations of Mackey functors. While their formalism does allow for the incomplete Mackey functors we study in this paper, Bley and Boltje mainly focus on the case of infinite groups.    
\end{remark}

The restriction axiom of Definition \ref{def:transfersystem} shows that when we look at the slice categories of all things which transfer to a fixed element, then these are closed under meets.

\begin{lemma}[Meet Lemma, {\cite[Lemma A.6]{RubinDetecting}}]\label{lem: Meet Lemma} If \(H \to K\) and \(J \to K\) are both in the transfer system \(\mathcal{O}\), then \(H\cap J\to H\) and \(H\cap J \to J\) are in \(\mathcal{O}\).  
\end{lemma}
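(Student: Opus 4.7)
The plan is to obtain both conclusions as symmetric, one-step applications of the restriction axiom from Definition \ref{def:transfersystem}. Both hypotheses, together with the fact that $\cO$ refines the subset relation, give $H\subseteq K$ and $J\subseteq K$, which is exactly the kind of input that the restriction axiom consumes: an arrow in $\cO$ together with a subgroup of its target.

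First I would apply the restriction axiom to the arrow $H\to K$, taking the auxiliary subgroup to be $L=J$. The hypothesis ``$L\subseteq H$'' of the axiom (where its ``$H$'' corresponds to our target $K$) is satisfied because $J\subseteq K$, and the conclusion reads $H\cap J \to J$ in $\cO$. Exchanging the roles of $H$ and $J$, a second application of the axiom to $J\to K$ with $L=H\subseteq K$ yields $H\cap J\to H$.

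Since the argument is essentially an unpacking of the axiom in two symmetric ways, there is no real obstacle to speak of. The only care needed is bookkeeping: keeping straight which arrow plays the role of ``$K\to H$'' in the axiom and which subgroup plays the role of ``$L$''. Notably, neither transitivity of $\cO$, closure under conjugation, nor any property of intersections in $\Sub(G)$ beyond the set-theoretic meet is invoked; the Meet Lemma is already forced by the restriction axiom alone.
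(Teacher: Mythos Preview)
Your proof is correct and matches the paper's approach: the paper does not spell out a proof but explicitly attributes the Meet Lemma to the restriction axiom (``The restriction axiom shows that when we look at the slice categories of all things which transfer to a fixed element, then these are closed under meets''), which is exactly the mechanism you use. Your two symmetric applications of the restriction axiom are precisely what is intended.
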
 

Note that transfer systems for \(G\) form a poset with respect to inclusion.

Given a transfer system \(\cO\) for \(G\) and a subgroup \(H\) of \(G\), there is an associated transfer system on \(H\) formed by restriction.

\begin{definition}
    If \(\cO\) is a transfer system for \(G\), then let \(i_H^\ast\cO\) be the restriction of \(\cO\) to \(\Sub(H)\).
\end{definition}

\begin{remark}
    For any subgroup \(H\), the restriction \(i_H^\ast\cO\) not only has conjugation invariance by \(H\) but also a stronger condition: if \(K\subseteq H\) and \(g\in G\) is such that \(gKg^{-1}\subseteq H\), then whenever \(J\to K\), we also have \(gJg^{-1}\to gKg^{-1}\). In particular, we have conjugation invariance for conjugation by any element in the normalizer of \(H\) in \(G\).
\end{remark}

\subsection{\texorpdfstring{\(\cO\)-Mackey functors}{O-Mackey functors}}
Associated to any transfer system is a naturally defined version of Mackey functors where the transfers are essentially parameterized by the maps in the transfer system. For this, we build a version of the Lindner category, starting with a subcategory of \(\Fin^G\) that will structure transfers. In the language of \cite{BH-Bi-incompleteT}, 
\(\Fin^{G}_{\cO}\) is the indexing category
associated to \(\cO\).
\begin{definition}
    Let \(\Fin^{G}_{\cO}\)\index{finite@\(\Fin^{G}_{\cO}\)} be the wide subcategory of the category \(\Fin^G\) of finite \(G\)-sets where a map \(f\colon S\to T\) of finite \(G\)-sets is in \(\Fin^G_{\cO}\) if and only if for all \(s\in S\), we have
    \[
        \Stab(s)\to \Stab\!\big(f(s)\big)
    \]
    in \(\cO\).
\end{definition}

Put another way, this is the smallest wide, pullback stable, finite coproduct complete subcategory of the category of finite \(G\)-sets that contains all morphisms \(G/K \to G/H\) whenever \(K \to H\) is in \(\cO\).

\begin{definition}[{\cite[Definition 7.21]{BH-Bi-incompleteT}}]
Define \(\cA^{\cO}\)\index{A@\(\cA^{\cO}\)}, the Lindner category of \(\cO\),  to have objects the finite \(G\)-sets and morphisms the isomorphism classes of spans 
\[
\cA^{\cO}(S,T) = \big\{ [S \leftarrow U \xrightarrow{h} T] \mid h \in \Fin^{G}_{\cO}\big\}.
\]
\end{definition}

The category \(\cA^{\cO}\) is pre-additive, and the categorical biproduct is given by the disjoint union of finite \(G\)-sets.
When we take the complete transfer system consisting of all subgroup inclusions, 
\(\cA^{\cO}\) is the usual Lindner category of spans of \(G\)-sets.

\begin{definition}[{\cite[Definitions 7.21 and 7.24]{BH-Bi-incompleteT}}]\label{def:OMackeyFunctors}
    Let \(G\) be a finite group and \(\cO\) be a transfer system for \(G\). 
    An \(\cO\)-Mackey functor is a product preserving functor 
    \[M: \cA^{\cO} \to \Ab.\] 
    A map of \(\cO\)-Mackey functors is a natural transformation. We will use the notation \(\OMackey^G\)\index{omackey@\(\OMackey^G\)} for this category.
\end{definition}

Notice that this definition generalises two classical cases; when \(\cO\) is the maximal transfer system we recover the category of complete \(G\)-Mackey functors and when \(\cO\) is the minimal transfer system we recover the category of \(G\)-coefficient systems. 
As Mackey functors are  product preserving functors, we only need to consider the values of our Mackey functors on orbits \(G/H\).

A key example is the incomplete version of the Burnside Mackey functor.
\begin{ex}[{\cite[Definition 7.25]{BH-Bi-incompleteT}}]\label{ex:incompleteburnside}
   The \(\cO\)-Burnside Mackey functor \(\mA^{\cO}\)\index{A@\(\mA^{\cO}\)} takes value 
   \[
    \mA^{\cO}(G/H) = \Z \big\{ H/K \mid K \to H \textrm{ in } \cO   \big\},
   \]   
   the free abelian group on isomorphism classes of \(\cO\)-admissible orbits \(H/K\).
    The conjugation and restriction maps are as for the complete Burnside Mackey functor.  
    We have a transfer map 
    \[
    \mA^{\cO}(G/K) \to \mA^{\cO}(G/H)
    \]
    if and only if
    \(K \to H\) is in \(\cO\). This transfer map is given by the induction
    construction on \(K\)-sets, sending \(K/L\) to \(H/L\). 
\end{ex}

\begin{proposition}
    The category of \(\cO\)-Mackey functors is symmetric monoidal under the Day convolution with unit the \(\cO\)-Burnside Mackey functor, hence we have an equivalence of categories
    \[
    \OMackey \cong \mA^\cO \dashMod.
    \]
\end{proposition}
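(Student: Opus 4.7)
The plan is to produce a symmetric monoidal structure on the Lindner category \(\cA^\cO\) itself and transport it to \(\OMackey^G\) via Day convolution; the claimed equivalence will then be the tautological statement that modules over the tensor unit recover any ambient symmetric monoidal category.

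For the monoidal structure on \(\cA^\cO\), I would take as tensor product the cartesian product of \(G\)-sets (with diagonal action), extended to spans by pullback in the standard way. The unit is \(G/G\), and the coherence isomorphisms descend from those on \(\Fin^G\), so the one nontrivial verification is that \(\Fin^G_\cO\) is closed under cartesian products. This is where I expect the only real work to lie. Given \(f\colon S\to T\) and \(g\colon S'\to T'\) in \(\Fin^G_\cO\), a point \((s,s')\) has stabilizer \(\Stab(s)\cap\Stab(s')\) and its image has stabilizer \(\Stab(f(s))\cap\Stab(g(s'))\). Since \(\Stab(s)\subseteq\Stab(f(s))\), restricting the arrow \(\Stab(s)\to\Stab(f(s))\) of \(\cO\) along the subgroup \(\Stab(f(s))\cap\Stab(g(s'))\subseteq\Stab(f(s))\) yields
\[
\Stab(s)\cap\Stab(g(s'))\to\Stab(f(s))\cap\Stab(g(s')).
\]
A symmetric argument produces \(\Stab(s)\cap\Stab(s')\to\Stab(s)\cap\Stab(g(s'))\), and transitivity of the partial order \(\cO\) closes the chain. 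Pullback stability and closure under finite coproducts for \(\Fin^G_\cO\) (already recorded when defining the category) then ensure the product of two spans is again a span in \(\cA^\cO\).

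Because \(\cA^\cO\) is pre-additive with biproduct \(\sqcup\), and cartesian product distributes over disjoint union, the standard Day convolution equips \(\Fun(\cA^\cO,\Ab)\) with a closed symmetric monoidal structure that restricts to the full subcategory of product-preserving functors, that is, to \(\OMackey^G\); the unit is the representable \(\cA^\cO(G/G,-)\). To identify this unit I would unpack the Hom-set: a morphism \(G/G\to G/H\) in \(\cA^\cO\) is an isomorphism class of spans \([G/G\leftarrow U\to G/H]\) whose right leg lies in \(\Fin^G_\cO\); since \(G/G\) is terminal in \(\Fin^G\) the left leg contributes nothing, so the set consists of isomorphism classes of finite \(G\)-sets over \(G/H\) with structure map in \(\Fin^G_\cO\). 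Under the equivalence \(\Fin^G/(G/H)\simeq\Fin^H\), these correspond to finite \(H\)-sets whose orbits are of the form \(H/K\) with \(K\to H\) in \(\cO\). Comparing with Example \ref{ex:incompleteburnside} gives \(\cA^\cO(G/G,-)\cong\mA^\cO\) as \(\cO\)-Mackey functors, naturality in \(G/H\) being automatic from the universal property of the representable. Since in any symmetric monoidal category the category of modules over the tensor unit is canonically equivalent to the whole category, the equivalence \(\OMackey^G\cong\mA^\cO\dashMod\) follows at once.
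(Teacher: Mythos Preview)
The paper does not prove this proposition; it is stated without argument as a standard fact about Day convolution on span categories. Your outline is correct and is exactly how one would fill in the details. The verification that \(\Fin^G_\cO\) is closed under cartesian products via two applications of the restriction axiom is right, and the identification of the representable \(\cA^\cO(G/G,-)\) with \(\mA^\cO\) is the intended one.

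One small point worth tightening: the assertion that Day convolution on \(\Fun(\cA^\cO,\Ab)\) \emph{restricts} to the full subcategory of product-preserving functors is not automatic and is precisely where the distributivity of \(\times\) over \(\sqcup\) in \(\Fin^G\) is used. Concretely, the Day convolution of two representables \(\cA^\cO(S,-)\) and \(\cA^\cO(T,-)\) is the representable \(\cA^\cO(S\times T,-)\), which is additive; since every additive functor is a colimit of such representables and the box product preserves colimits in each variable, the convolution of two additive functors remains additive. You gesture at this with ``distributes over disjoint union,'' but in a fully written proof that step should be made explicit.
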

    This equivalence passes to the rational versions
    \[
    \OMackey_\bQ \cong \mA^\cO_\bQ \dashMod,
    \]
    where \(\OMackey_\bQ\) is the category of \(\cO\)-Mackey functors with values in rational vector spaces     
    and \(\mA^\cO_\bQ\) is the rational \(\cO\)-Burnside ring Mackey functor, that is, 
    \(\mA^\cO_\bQ(G/H) = \mA^\cO(G/H) \otimes \bQ\).

\begin{ex}\label{ex:c_6_Burnside}
    Let \(G=C_6\) and take the transfer system consisting of the identity maps, \(C_2 \to C_6\) and \(C_1 \to C_3\). 
    The rational \(C_6\)-Burnside ring for \(\cO\) is the following incomplete \(\cO\)-Mackey functor, where in the square brackets we indicate the additive basis of each \(\Q\)-module. We draw this as a Lewis diagram below.

\[
\xymatrix@R+0.5cm@C+0.5cm{
& \bQ[C_6/C_6, C_6/C_2]
\ar@(ul,ur)^{\trivgp}
\ar@/^1.pc/[dr]|{R_{C_2}^{C_6}}
\ar@/_1.pc/[dl]|{R_{C_3}^{C_6}}
\\
\bQ[C_3/C_3,C_3/C_1]
\ar@(u,l)_{C_6/C_3}
\ar@/_1.pc/[dr]|{R_{C_1}^{C_3}}
&&
\bQ[C_2/C_2]
\ar@(u,r)^{C_6/C_2}
\ar@/^1.pc/[dl]|{R_{C_1}^{C_2}}
\ar@/^1.pc/[ul]|{T_{C_2}^{C_6}}
\\
&
\bQ[C_1/C_1]
\ar@(dr,dl)^{C_6}
\ar@/_1.pc/[ul]|{T_{C_1}^{C_3}}
}
\]
\end{ex}

\section{Splitting the category of rational \texorpdfstring{\(\cO\)}{O}-Mackey functors}\label{sec:idempotents}

\subsection{The Rational \texorpdfstring{\(\cO\)}{O}-Burnside ring}

\begin{definition}\label{def:Sub_OG}
	If \(\cO\) is a transfer system, let \(\SubOG\)\index{sub@\(\SubOG\)} be the subposet of \(\Sub(G)\) consisting of subgroups \(H\) such that \(H\to G\) in \(\cO\). 
\end{definition}

\begin{proposition}
	The inclusion of \(\SubOG\) into \(\Sub(G)\) preserves meets. 
\end{proposition}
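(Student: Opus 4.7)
The plan is straightforward, as the Meet Lemma does most of the work. Since $\SubOG$ inherits its ordering from $\Sub(G)$, whose meet operation is intersection of subgroups, it suffices to show two things: (i) if $H, K \in \SubOG$, then $H \cap K \in \SubOG$, and (ii) the intersection $H \cap K$ thus serves as the meet of $H$ and $K$ within $\SubOG$.

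For (i), suppose $H \to G$ and $K \to G$ both lie in $\cO$. Applying the Meet Lemma (Lemma \ref{lem: Meet Lemma}) with $H \to G$ and $K \to G$ yields $H \cap K \to H$ in $\cO$. Since $\cO$ is a partial order on $\Sub(G)$, in particular transitive, composing $H \cap K \to H$ with $H \to G$ gives $H \cap K \to G$ in $\cO$, so $H \cap K \in \SubOG$.

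For (ii), any common lower bound $L$ in $\SubOG$ of $H$ and $K$ satisfies $L \subseteq H$ and $L \subseteq K$ as subgroups (since the order on $\SubOG$ is inherited from $\Sub(G)$), hence $L \subseteq H \cap K$. Combined with (i), this shows $H \cap K$ is the meet of $H$ and $K$ in $\SubOG$, and it coincides with the meet taken in $\Sub(G)$, so the inclusion preserves meets.

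The only place requiring any care is the invocation of transitivity of $\cO$ at the end of step (i); this is immediate from the definition of a transfer system as a partial order refining subgroup inclusion, so there is no substantive obstacle to the proof.
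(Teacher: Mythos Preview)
Your proof is correct and takes essentially the same approach as the paper, which simply says the statement is a restatement of the Meet Lemma. You have spelled out the details (closure under intersection via the Meet Lemma plus transitivity, and that the inherited order forces the meets to coincide) that the paper leaves implicit.
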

\begin{proof}
    This is a restatement of Lemma~\ref{lem: Meet Lemma}.
\end{proof}

\begin{corollary}\label{cor:minimal_transferingtoG}
    There is a minimal element \(N^{\cO}\) in \(\SubOG\).
\end{corollary}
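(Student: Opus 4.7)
The plan is to exhibit \(N^{\cO}\) as the intersection of all subgroups in \(\SubOG\). First, I would observe that \(\SubOG\) is non-empty and finite: it contains \(G\) itself, since every transfer system, being a partial order on \(\Sub(G)\), is reflexive, so \(G\to G\) lies in \(\cO\); finiteness follows from finiteness of \(\Sub(G)\), which in turn follows from finiteness of \(G\).

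Next I would set
\[
    N^{\cO} := \bigcap_{H \in \SubOG} H,
\]
which is a finite meet in \(\Sub(G)\). The preceding proposition says that the inclusion \(\SubOG \hookrightarrow \Sub(G)\) preserves meets, so applied inductively to this finite intersection, \(N^{\cO}\) lies in \(\SubOG\); concretely, the induction step uses that if \(H_1, H_2 \in \SubOG\), then Lemma~\ref{lem: Meet Lemma} applied to \(H_1 \to G\) and \(H_2 \to G\) gives \(H_1 \cap H_2 \to H_1\) in \(\cO\), and then transitivity of the partial order \(\cO\) yields \(H_1 \cap H_2 \to G\), so \(H_1 \cap H_2 \in \SubOG\).

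Finally, by construction \(N^{\cO} \subseteq H\) for every \(H \in \SubOG\). Since the partial order on \(\SubOG\) is the restriction of subset inclusion from \(\Sub(G)\), this exhibits \(N^{\cO}\) as the (unique) minimum element of \(\SubOG\).

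There isn't really a main obstacle: once the meet-preservation proposition is in hand, the corollary is a formal consequence of the finiteness of \(\SubOG\). The only subtlety worth flagging explicitly is verifying non-emptiness via reflexivity, and spelling out how the induction uses Lemma~\ref{lem: Meet Lemma} together with transitivity of \(\cO\) to keep the intermediate intersections in \(\SubOG\).
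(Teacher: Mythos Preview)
Your argument is correct and is exactly the intended one: the paper states this as an immediate corollary of the preceding meet-preservation proposition, and you have simply spelled out the obvious finite-intersection argument that makes this precise. There is nothing to add.
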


The conjugation condition on a transfer system also implies that \(\Sub^G(\cO)\) is closed under conjugation.

\begin{proposition}
    The sub-poset \(\SubOG\) is a \(G\)-equivariant sub-poset of \(\Sub(G)\) under conjugation.
\end{proposition}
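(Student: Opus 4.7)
The plan is to derive this directly from the conjugation axiom in Definition~\ref{def:transfersystem}. Recall that \(\Sub(G)\) is a \(G\)-poset under the conjugation action \(g\cdot H := gHg^{-1}\), and for each \(g\in G\) the map \(H\mapsto gHg^{-1}\) is an order-isomorphism of \(\Sub(G)\) (it is a bijection and clearly preserves subgroup inclusion in both directions). So the order-theoretic content is automatic once we verify that \(\SubOG\) is stable under conjugation; the partial order on \(\SubOG\) is just the restriction of the inclusion order.

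To verify closure under conjugation, I would take \(H\in\SubOG\), which by Definition~\ref{def:Sub_OG} means \(H\to G\) lies in \(\cO\). For any \(g\in G\), the conjugation axiom of a transfer system gives \(gHg^{-1}\to gGg^{-1}\) in \(\cO\). Since \(gGg^{-1}=G\), this says precisely that \(gHg^{-1}\to G\) is in \(\cO\), i.e.\ \(gHg^{-1}\in\SubOG\). Hence \(\SubOG\) is a \(G\)-stable subset of \(\Sub(G)\), and combined with the first paragraph this exhibits it as a \(G\)-equivariant sub-poset.

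There is no real obstacle here: the statement is essentially a repackaging of the conjugation axiom. The only thing worth flagging is the (trivial) observation that the ambient target of the transfer \(H\to G\) is fixed by conjugation, which is what makes the axiom immediately applicable.
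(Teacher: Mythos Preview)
Your proof is correct and follows the same approach as the paper, which does not give a formal proof but simply records the proposition after noting that ``the conjugation condition on a transfer system also implies that \(\SubOG\) is closed under conjugation.'' Your write-up spells out exactly that observation, including the trivial point \(gGg^{-1}=G\).
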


\begin{definition}
    An \(\cO\)-class function is a \(G\)-equivariant function
    \[
        \SubOG\to \mathbb Z.
    \]
    This is a ring under point-wise sums and products, and we denote this ring by \(\Cl^{\cO}(G;\Z)\)\index{class@\(\Cl^{\cO}(G;\Z)\)}.
\end{definition}

\begin{proposition}\label{prop: markhom}
	 If \(\cO\) is a transfer system, the ghost coordinate map 
	\[
		\mA^{\cO}(G/G)\to \Cl^{\cO}(G;\Z)
	\]\index{A@\(\mA^{\cO}\)}
    given by 
    \[
        T\mapsto \big(H\mapsto |T^H|\big)
    \]
	is an injective map of rings and an isomorphism after rationalization.
\end{proposition}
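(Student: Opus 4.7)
The plan is to mimic the classical mark homomorphism argument for the complete Burnside ring, restricted to the $\cO$-admissible orbits. The first observation is that both source and target are free abelian groups of the same finite rank: by the description in Example \ref{ex:incompleteburnside}, $\mA^{\cO}(G/G)$ has the isomorphism classes of orbits $G/K$ with $K\in \SubOG$ as a basis, while $\Cl^{\cO}(G;\Z) = \Map^G(\SubOG,\Z)$ is free abelian on the indicator functions of $G$-conjugacy classes in $\SubOG$. Both bases are therefore indexed by $\SubOG/G$. The map is well-defined because left multiplication by $g$ induces a bijection $T^H \cong T^{gHg^{-1}}$, and it is a ring homomorphism because fixed points are additive on disjoint unions and multiplicative on products of $G$-sets, which matches the ring structure on $\mA^{\cO}(G/G)$ coming from the Day convolution.

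For injectivity and the rational isomorphism, I would fix a total order on $\SubOG/G$ that refines the subconjugacy relation, so that $[H]<[K]$ whenever $H$ is strictly subconjugate to $K$ in $G$. In the chosen bases the matrix of the ghost coordinate map has entries $M_{[H],[K]} = |(G/K)^H|$. A coset $gK$ is $H$-fixed if and only if $g^{-1}Hg\subseteq K$, so $M_{[H],[K]}=0$ unless $H$ is subconjugate to $K$; with the chosen ordering this forces the matrix to be upper triangular. Each diagonal entry is $M_{[K],[K]} = |(G/K)^K| = |N_G(K)/K|$, a positive integer, so the determinant is nonzero. This yields injectivity of the integral map, and finiteness of the cokernel gives an isomorphism after tensoring with $\Q$.

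There is no deep obstacle here; the argument is simply the classical fixed-point mark computation transported to the sub-lattice $\SubOG \subseteq \Sub(G)$. The one point worth flagging is the bookkeeping observation that lets the matrix be square: both the basis of $\mA^{\cO}(G/G)$ indexing columns and the basis of $\Cl^{\cO}(G;\Z)$ indexing rows must match, which is exactly the symmetric appearance of $\SubOG$ on both sides of the map. Once this symmetry is in place, the triangular structure and the explicit formula for the diagonal entries carry over from the complete case verbatim, and the conclusion is immediate.
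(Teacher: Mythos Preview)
Your argument is correct and is exactly the approach the paper takes: both invoke the classical upper-triangular mark-matrix argument, restricted to the conjugacy classes in \(\SubOG\), with nonzero diagonal entries \(|N_G(K)/K|\). The paper's own proof is just a two-sentence sketch of precisely this, so your write-up is in fact more detailed than theirs.
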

\begin{proof}
The proof is the same as in the classical, complete case. The usual argument that the map can be represented as an upper triangular matrix after choosing an ordering on \(\SubOG\) compatible with the cardinalities goes through without change, which shows that this map is injective. After rationalization, we get vector spaces of the same finite dimension on both sides, thus the injective map is an isomorphism.
\end{proof}

\begin{corollary}\label{cor: Separates Points}
    Given two subgroups \(H_0,H_1\in \SubOG\), the subgroups are conjugate if and only if the functions on \(\SubOG\) defined by
    \[ 
    K \mapsto |(G/K)^{H_0}|\text{ and }K\mapsto |(G/K)^{H_1}|
    \]
    agree.
\end{corollary}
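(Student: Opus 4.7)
The plan is to dispatch the two implications separately. The forward direction—conjugate subgroups give equal mark functions—is immediate: if \(H_1 = gH_0g^{-1}\), then left multiplication by \(g\) is a \(G\)-equivariant self-bijection of \(G/K\) restricting to a bijection \((G/K)^{H_0} \xrightarrow{\,\sim\,} (G/K)^{H_1}\), so the cardinalities agree for every \(K \in \SubOG\). For the reverse implication, I would lean on Proposition~\ref{prop: markhom}: rationally, the ghost map
\[
\phi \colon \mA^{\cO}(G/G)\otimes\bQ \xrightarrow{\;\sim\;} \Cl^{\cO}(G;\bQ)
\]
is an isomorphism. Choosing the basis \(\{[G/K] \mid [K]\in\SubOG/G\}\) on the source and the basis of indicator functions \(\{\chi_{[H]} \mid [H]\in\SubOG/G\}\) of conjugacy classes on the target, \(\phi\) is represented by the \(\cO\)-table of marks \(M_{[H],[K]} = |(G/K)^H|\). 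The hypothesis of the corollary asserts equality of two rows of \(M\); what I must conclude is that the corresponding row indices already lie in the same conjugacy class.

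To extract this, I would use surjectivity of \(\phi\): for the conjugacy class \([H_0]\), pick a preimage \(\xi_{[H_0]} = \sum_{[K]} a_{[K]}\,[G/K] \in \mA^{\cO}(G/G)\otimes\bQ\) of the indicator \(\chi_{[H_0]}\). Evaluating at \(H_0\) yields
\[
\sum_{[K]} a_{[K]}\,|(G/K)^{H_0}| \;=\; \chi_{[H_0]}(H_0) \;=\; 1.
\]
Under the hypothesis \(|(G/K)^{H_0}| = |(G/K)^{H_1}|\) for every \(K\in\SubOG\), the identical sum also computes \(\chi_{[H_0]}(H_1)\), which is therefore equal to \(1\). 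By the definition of \(\chi_{[H_0]}\), this forces \(H_1\) to lie in the conjugacy class of \(H_0\).

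I do not anticipate a genuine obstacle: the real content is already absorbed by Proposition~\ref{prop: markhom}. The one conceptual subtlety is that the ghost map naturally packages the \emph{columns} of \(M\), whereas the corollary compares \emph{rows}, so the argument must exploit the invertibility of \(M\) (via surjectivity of \(\phi\), i.e.\ the existence of rational combinations realizing each indicator function) rather than the bare injectivity of \(\phi\).
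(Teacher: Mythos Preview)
Your argument is correct. The paper states this as an unproven corollary of Proposition~\ref{prop: markhom}, so the intended justification is simply that the ghost map is a rational isomorphism between spaces of the same dimension \(|\SubOG/G|\); the table of marks is therefore an invertible square matrix, and an invertible matrix cannot have two equal rows. Your use of surjectivity to produce a preimage of the indicator function \(\chi_{[H_0]}\) is exactly a concrete unpacking of this invertibility, so the approach is essentially the same as the paper's implicit one.
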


As a consequence, we immediately obtain a description of minimal idempotents in \(\mA^{\cO}_\Q(G/G)\) and hence a maximal decomposition of the unit.

\begin{proposition}
    The minimal idempotents in \(\mA^{\cO}_\Q(G/G)\) correspond to the characteristic functions in \(\Cl^{\cO}(G;\Z)\)
    \[
    \delta_{H}(K)=\begin{cases} 1 & K=gHg^{-1}, \\
    0 & \text{otherwise.}
    \end{cases}
    \] 
\end{proposition}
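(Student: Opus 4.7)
The plan is to reduce the claim to an elementary statement about idempotents in a finite product of copies of \(\bQ\), using the ghost map from Proposition \ref{prop: markhom}.

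First, I would invoke Proposition \ref{prop: markhom} to replace \(\mA^{\cO}_\Q(G/G)\) with the rationalized class ring \(\Cl^{\cO}(G;\Q)\) via the ghost coordinate map; since this is an isomorphism of rings after rationalization, it induces a bijection on idempotents and preserves the property of being minimal. It therefore suffices to identify the minimal idempotents of \(\Cl^{\cO}(G;\Q)\).

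Next, I would observe that a \(G\)-equivariant function \(\SubOG \to \Q\) is exactly a function that is constant on \(G\)-conjugacy classes of subgroups in \(\SubOG\). Choosing a set of representatives for the conjugacy classes \(\SubOG/G\) yields a ring isomorphism
\[
	\Cl^{\cO}(G;\Q) \cong \prod_{(H) \in \SubOG/G} \Q,
\]
under pointwise sum and product, where the factor corresponding to \((H)\) is simply evaluation at (any conjugate of) \(H\). The minimal idempotents of a finite product of copies of \(\Q\) are the coordinate projections, i.e.\ the tuples with a single \(1\) and all other entries \(0\); pulled back to \(\Cl^{\cO}(G;\Q)\) these are precisely the indicator functions \(\delta_H\) as in the statement. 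Each such \(\delta_H\) lies in \(\Cl^{\cO}(G;\Z) \subseteq \Cl^{\cO}(G;\Q)\) because \(\delta_H\) is by construction constant on the \(G\)-orbit of \(H\) and zero elsewhere, giving a \(G\)-equivariant function.

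The only step that requires a moment of care—and thus the main (mild) obstacle—is verifying that the decomposition above really does give a \emph{product} of fields, so that the minimal idempotents are as described and no further refinement is possible. This is immediate once one notes that \(\SubOG/G\) is a finite set and that \(\Cl^{\cO}(G;\Q)\) is just the algebra of \(\Q\)-valued functions on this finite set; by Corollary \ref{cor: Separates Points} the points of \(\SubOG/G\) are already separated by the ghost coordinates, so no two conjugacy classes collapse and the product decomposition is maximal. Running this analysis backwards through the ghost map yields the orthogonal decomposition \(1 = \sum_{(H) \in \SubOG/G} \delta_H\) in \(\mA^{\cO}_\Q(G/G)\) with each \(\delta_H\) minimal, completing the proof.
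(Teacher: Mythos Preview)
Your proposal is correct and follows exactly the approach the paper intends: the paper states this proposition as an immediate consequence of the ghost isomorphism (Proposition~\ref{prop: markhom}) without giving an explicit proof, and your argument simply spells out that consequence by identifying \(\Cl^{\cO}(G;\Q)\) with a finite product of copies of \(\Q\) indexed by \(\SubOG/G\). The only minor redundancy is your appeal to Corollary~\ref{cor: Separates Points} to justify that the product decomposition is maximal; that decomposition is immediate from the definition of \(\Cl^{\cO}(G;\Q)\) as \(G\)-equivariant functions on \(\SubOG\), so no further separation argument is needed there.
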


\begin{definition}
    Let \(e_{[H]^\cO}\)\index{e@\(e_{[H]^{\cO}}\)} denote the idempotent in \(\mA^{\cO}_{\bQ}(G/G)\) corresponding to \(\delta_H\). If the transfer system \(\cO\) is clear from the context, we may remove it from the notation of the idempotent.
\end{definition}

We adapt Gluck's approach to the incomplete case and obtain formulas relating the idempotents to the admissible orbits, see \cite[Section 3]{gluck}.

\begin{lemma}\label{lem:orbitsumidempotent}
    For \(H \to G\) in \(\cO\), we have a decomposition in \(\mA^{\cO}_{\Q}(G/G)\):
    \[
    G/H  = 
    \sum_{K \to H} 
    \frac{|N_G(K)|}{|H|} e_{[K]^{\cO}}. 
    \]
\end{lemma}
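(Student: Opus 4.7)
The plan is to push the claimed identity across the rationalized ghost map of Proposition~\ref{prop: markhom}. Since that map is a ring isomorphism after tensoring with \(\Q\), it suffices to compare both sides as \(\cO\)-class functions by evaluating at an arbitrary \(J \in \SubOG\).

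On the left, \(G/H\) is sent to the function \(J \mapsto |(G/H)^J|\). I would compute this by the standard orbit-counting argument: the coset \(gH\) is fixed by \(J\) if and only if \(g^{-1}Jg \subseteq H\), and grouping these elements by the conjugate \(K = g^{-1}Jg\) they produce gives
\[
|(G/H)^J| = \sum_{\substack{K \subseteq H \\ K \sim_G J}} \frac{|N_G(K)|}{|H|},
\]
since for each such \(K\) there are exactly \(|N_G(K)|\) choices of \(g \in G\) yielding it, and one divides by \(|H|\) to pass to cosets. On the right, each \(e_{[K]^{\cO}}\) is sent to the characteristic function \(\delta_K\) of its \(G\)-conjugacy class, so the right-hand side evaluates at \(J\) to
\[
\sum_{\substack{K \to H \text{ in } \cO \\ K \sim_G J}} \frac{|N_G(K)|}{|H|}.
\]

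To finish, I only need to see that the two indexing sets coincide, and this is the one place where the transfer system axioms enter. Given \(K \subseteq H\) with \(K \sim_G J\), I would first apply the conjugation axiom to \(J \to G\) (which holds because \(J \in \SubOG\)) to obtain \(K \to G\) in \(\cO\); then I would apply the restriction axiom to \(K \to G\) with \(H \subseteq G\), producing \(K \cap H \to H\), which equals \(K \to H\) since \(K \subseteq H\). The reverse direction is trivial. The two sums therefore agree term-by-term, and the lemma follows.

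The only real obstacle is this last bookkeeping step, which uses that \(H \to G\) belongs to \(\cO\) so that restriction along \(H \hookrightarrow G\) is permitted; this is exactly the hypothesis under which \(G/H\) represents an element of \(\mA^{\cO}(G/G)\), so nothing is lost.
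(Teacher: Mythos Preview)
Your proof is correct and follows the same route as the paper: push the identity across the ghost isomorphism and compare both sides at an arbitrary \(J\in\SubOG\) by counting \(G\)-conjugates of \(J\) lying in \(H\). You are in fact more explicit than the paper at the one nontrivial point---verifying via the conjugation and restriction axioms that every \(K\subseteq H\) with \(K\sim_G J\) satisfies \(K\to H\) in \(\cO\)---though your final remark slightly misattributes what is used: the restriction axiom only needs \(H\subseteq G\), not \(H\to G\); the hypothesis \(H\to G\) is what puts \(G/H\) in \(\mA^{\cO}(G/G)\) in the first place, not what licenses the restriction step.
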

\begin{proof}
    This follows by noting that the ghost coordinate map applied to \(G/H\) 
    evaluated at \(K\) (a subgroup of \(H\) with \(K \to G \in \cO\)) is \(| (G/H)^K |\). 
    We may calculate the size of that set as the number of those \(gH \in G/H\) where 
    \(g^{-1}Kg \subseteq H\). Let
    \[
    S = \{ g \in G \mid g^{-1}Kg \subseteq H \}
    \]
    then \(| (G/H)^K | = |S|/|H| \in \Z\).
    The set \(S\) may be partitioned into cosets of \(N_G(K)\), one coset for each
    distinct \(G\)-conjugate of \(K\) that is inside \(H\).
    Since \(e_{[K]^{\cO}}\) is invariant under conjugation of \(G\), the formula follows. 
\end{proof}

Since \(\{K\mid K\to H \in \cO\}\) is a poset under \(\cO\), M\"obius inversion gives us a way to use this to write the idempotents \(e_{[H]^{\cO}}\) in terms of the natural permutation basis of \(\mA^{\cO}_{\Q}(G/G)\). For this, recall the M\"obius function in the poset.
\begin{definition}
    If \(K\to H\) is in \(\cO\), then let
    \[
        \mu^{\cO}(K,H)=\sum_{i} (-1)^i c_i,
    \]\index{mobius@\(\mu^{\cO}(K,H)\)}
    where \(c_i\) is the number of strictly increasing chains of subgroups in \(\cO\)
    \[
    K=H_0\to H_1\to\dots\to H_{i-1} \to H_i = H, 
    \]
    and where \(\mu^{\cO}(K,K)=1\).
\end{definition}

\begin{lemma}\label{lem:idempotentformula}
    For  \(H \to G\) in \(\cO\), we have a decomposition
    \[
        e_{[H]^{\cO}} = \sum_{K \to H} \frac{|K|}{|N_G H|} \mu^\cO(K,H) G/K.
    \]
\end{lemma}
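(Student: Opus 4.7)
The plan is to obtain this formula from Lemma~\ref{lem:orbitsumidempotent} by M\"obius inversion in the finite poset
\[
P_H = \{K \mid K\to H \text{ in } \cO\}
\]
ordered by the transfer system \(\cO\).

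First I would clear denominators in Lemma~\ref{lem:orbitsumidempotent} by multiplying both sides by \(|H|\), obtaining
\[
|H|\cdot G/H \;=\; \sum_{K\to H} |N_G(K)|\, e_{[K]^{\cO}}.
\]
Now define two functions on \(P_H\) with values in \(\mA^{\cO}_{\bQ}(G/G)\):
\[
F(H) = |H|\cdot G/H, \qquad E(K) = |N_G(K)|\, e_{[K]^{\cO}}.
\]
Then the relation above reads \(F(H) = \sum_{K\to H} E(K)\), which is the standard input for M\"obius inversion in a finite poset.

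Next I would invoke the M\"obius inversion formula for the poset \(P_H\). Since the poset is finite (as \(G\) is finite) and \(\mu^{\cO}\) has been defined by the chain-counting formula that is standard for the poset M\"obius function, the inversion gives
\[
E(H) \;=\; \sum_{K\to H} \mu^{\cO}(K,H)\, F(K),
\]
that is,
\[
|N_G(H)|\, e_{[H]^{\cO}} \;=\; \sum_{K\to H} \mu^{\cO}(K,H)\, |K|\, G/K.
\]
Dividing by \(|N_G(H)|\) yields the stated formula.

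The only subtle point is the well-definedness of the ambient M\"obius inversion, since the poset \(P_H\) is defined by the transfer system rather than by subgroup inclusion. The two conditions one needs are that \(P_H\) is finite (immediate, as \(\Sub(G)\) is finite) and that \(\mu^{\cO}\) as defined equals the M\"obius function of \(P_H\) (immediate from the definition via alternating sums of chain counts, with \(\mu^{\cO}(K,K)=1\)); with these, the usual inversion identity \(\sum_{K\to L\to H}\mu^{\cO}(L,H) = \delta_{K,H}\) applies verbatim. Thus the main work has already been done in Lemma~\ref{lem:orbitsumidempotent}, and this lemma is a formal consequence.
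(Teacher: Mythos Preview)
Your proof is correct and is essentially the same as the paper's: both clear denominators in Lemma~\ref{lem:orbitsumidempotent} to obtain \(|H|\cdot G/H = \sum_{K\to H} |N_G(K)|\, e_{[K]^{\cO}}\), then apply M\"obius inversion in the poset \(\{K\mid K\to H\}\) and divide by \(|N_G H|\). The paper is equally brief, simply setting \(v_{[H]}=|H|\,G/H\), \(f_{[H]}=|N_G H|\,e_{[H]^{\cO}}\), and citing the M\"obius inversion formula; your added remark that the relation from Lemma~\ref{lem:orbitsumidempotent} holds at every \(H'\in P_H\) (since \(H'\to H\to G\) forces \(H'\to G\)) is implicit in both versions and worth making explicit.
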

\begin{proof}
    Define \(f_{[H]} = |N_G(H)|e_{[H]^{\cO}}\) and 
    \(v_{[H]} = |H|G/H\). Then Lemma \ref{lem:orbitsumidempotent} may be written as 
    \[
    v_{[H]}  = \sum_{K\to H} 
    f_{[K]}.
    \]
    The M\"obius inversion formula gives 
    \[
    f_{[H]} = \sum_{K\to H} 
    \mu^\cO(K,H) v_{[K]}. \qedhere
    \]    
\end{proof}

The transfer maps of the incomplete Burnside ring are given by induction 
\[
    H/K \mapsto G \times_H H/K = G/K.
\]
Since all summands in the formula of Lemma \ref{lem:idempotentformula} are of the form \(G/K\) with \(K\) a subgroup of \(H\), we obtain the following.
\begin{corollary}\label{cor:idempotenttransfer}
    For \(H \to G\) in \(\cO\), the idempotent \(e_{[H]^{\cO}}\) is in the image of the 
    transfer map \(\mA^{\cO}_{\Q}(G/H) \to \mA^{\cO}_{\Q}(G/G)\):
    \[
        e_{[H]^{\cO}}=\tr_{H}^{G}\left(\sum_{K\to H}\frac{|K|}{|N_GH|}\mu^{\cO}(K,H) H/K\right).
    \]
\end{corollary}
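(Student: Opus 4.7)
The plan is to deduce this directly from Lemma \ref{lem:idempotentformula} by reorganizing the sum through the transfer map. First, I would recall that by the description of the incomplete Burnside Mackey functor in Example \ref{ex:incompleteburnside}, the transfer
\[
\tr_H^G \colon \mA^{\cO}(G/H) \to \mA^{\cO}(G/G)
\]
is induced by the induction construction on finite \(H\)-sets, sending a basis element \(H/K\) (admissible when \(K \to H\) is in \(\cO\)) to \(G \times_H H/K = G/K\). This map is \(\Z\)-linear, hence \(\Q\)-linear after rationalization.

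Next I would note that every term \(G/K\) appearing in the formula of Lemma \ref{lem:idempotentformula} indeed arises from a subgroup \(K\) with \(K \to H\) in \(\cO\). By transitivity of the transfer system \(\cO\) (it is a partial order refining inclusion) together with the hypothesis \(H \to G\) in \(\cO\), we also have \(K \to G\) in \(\cO\); in particular, both \(H/K \in \mA^{\cO}_{\Q}(G/H)\) and \(G/K \in \mA^{\cO}_{\Q}(G/G)\) are well-defined basis elements and \(G/K = \tr_H^G(H/K)\).

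Combining these two observations with the \(\Q\)-linearity of \(\tr_H^G\), I would rewrite
\[
e_{[H]^{\cO}} = \sum_{K \to H} \frac{|K|}{|N_G H|} \mu^\cO(K,H)\, G/K = \sum_{K \to H} \frac{|K|}{|N_G H|} \mu^\cO(K,H)\, \tr_H^G(H/K),
\]
and then factor out \(\tr_H^G\) to obtain the stated formula. There is no substantive obstacle here: the argument is essentially a bookkeeping consequence of Lemma \ref{lem:idempotentformula} combined with the definition of the transfer in \(\mA^{\cO}\). The only point worth checking is the admissibility of the orbits \(H/K\), which is immediate from \(K \to H \in \cO\).
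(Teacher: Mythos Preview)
Your proposal is correct and follows essentially the same approach as the paper: both arguments observe that every summand \(G/K\) in the formula of Lemma~\ref{lem:idempotentformula} satisfies \(K\to H\), so that \(G/K=\tr_H^G(H/K)\), and then factor the transfer out of the sum by linearity. Your additional remark verifying that \(H/K\) and \(G/K\) are admissible (via transitivity of \(\cO\)) is a nice point of care that the paper leaves implicit.
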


An idempotent \(e\) of \(\mA^{\cO}_{\Q}\) splits the rational \(\cO\)-Burnside ring Mackey functor
\[
\mA^\cO_\bQ \cong e\mA^\cO_\bQ \oplus (1-e)\mA^\cO_\bQ
\]
where \((e\mA^\cO_\bQ)(G/K) = \res_K^G (e) \mA^\cO_\bQ(G/K) \).

\begin{corollary}\label{cor:splitting O Mackey}
    For \(\cO\) a transfer system we have a splitting of the category of rational \(\cO\)-Mackey functors
\[  \mA^\cO_\bQ \dashMod \simeq \prod_{(H)\in \SubOG/G}  e_{[H]^{\cO}}\mA^\cO_\bQ \dashMod,
\]
 where the product is taken over conjugacy classes of \(H\), with \(H \to G\) in \(\cO\).

    Thus every rational \(\cO\)-Mackey functor splits 
    \[
        \mM\cong \bigoplus_{(H)\in \SubOG/G} e_{[H]^{\cO}}\mM.
    \]
\end{corollary}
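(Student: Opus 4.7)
The plan is to treat this as a formal consequence of Theorem~A combined with the symmetric monoidal equivalence $\OMackey_\bQ \cong \mA^\cO_\bQ\dashMod$, invoking the general principle that an orthogonal decomposition of the unit of a commutative monoid in a symmetric monoidal category induces a product decomposition of the category of modules.

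First, I would observe that $\mA^\cO_\bQ(G/G)$ is precisely the endomorphism ring of the unit in $\mA^\cO_\bQ\dashMod$, so any idempotent there acts naturally (and centrally) on every object via the action map $\mA^\cO_\bQ(G/G) \to \End(\mM)$. By Theorem~A, the unit decomposes as $1 = \sum_{(H) \in \SubOG/G} e_{[H]^{\cO}}$ with pairwise orthogonal idempotents. Applying the restriction ring homomorphism $\mA^\cO_\bQ(G/G) \to \mA^\cO_\bQ(G/K)$ gives, at each level $K$, an orthogonal decomposition of the identity $1 = \sum_{(H)} \res^G_K(e_{[H]^{\cO}})$ in $\mA^\cO_\bQ(G/K)$.

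Second, for any $\mA^\cO_\bQ$-module $\mM$, I would define
\[
(e_{[H]^{\cO}}\mM)(G/K) := \res^G_K(e_{[H]^{\cO}}) \cdot \mM(G/K).
\]
This is a sub-Mackey functor because restrictions, conjugations and (admissible) transfers all commute with multiplication by elements of $\mA^\cO_\bQ(G/G)$; here one uses Frobenius reciprocity for the transfers, which holds since $\mA^\cO_\bQ$ is a Green functor over itself. Orthogonality of the $\res^G_K(e_{[H]^{\cO}})$ and the fact that they sum to $1$ immediately yield the direct sum decomposition
\[
\mM \cong \bigoplus_{(H)\in \SubOG/G} e_{[H]^{\cO}}\mM.
\]
This proves the second statement of the corollary.

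Third, for the equivalence of categories, I would define the functors in each direction: the projection $\mM \mapsto e_{[H]^{\cO}}\mM$, which lands in modules over the Green functor $e_{[H]^{\cO}}\mA^\cO_\bQ$ (with unit $e_{[H]^{\cO}}$), and the inclusion that views an $e_{[H]^{\cO}}\mA^\cO_\bQ$-module as an $\mA^\cO_\bQ$-module through the split surjection $\mA^\cO_\bQ \twoheadrightarrow e_{[H]^{\cO}}\mA^\cO_\bQ$. The coordinate-wise assembly of these into $\prod_{(H)} e_{[H]^{\cO}}\mA^\cO_\bQ\dashMod \to \mA^\cO_\bQ\dashMod$ is quasi-inverse to $\mM \mapsto (e_{[H]^{\cO}}\mM)_{(H)}$, with unit and counit being isomorphisms by the direct sum decomposition above. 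The main (minor) obstacle is checking that morphisms of $\mA^\cO_\bQ$-modules preserve the summands $e_{[H]^{\cO}}\mM$; this is automatic because the idempotents are central, so any $\mA^\cO_\bQ$-linear map intertwines multiplication by $e_{[H]^{\cO}}$ at each level.
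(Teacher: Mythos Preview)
Your proposal is correct and follows essentially the same approach as the paper, which treats the corollary as an immediate consequence of the orthogonal idempotent decomposition of the unit in $\mA^\cO_\bQ(G/G)$ together with the standard fact that such a decomposition splits the module category. In fact you give considerably more detail than the paper does---the paper states the corollary without proof, merely recording beforehand that an idempotent $e$ splits $\mA^\cO_\bQ$ levelwise via $(e\mA^\cO_\bQ)(G/K) = \res_K^G(e)\,\mA^\cO_\bQ(G/K)$.
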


To understand the splitting, we need to understand how these interact with groups outside of \(\SubOG\). For this, we compare with the complete case, the case of the terminal transfer system.

\begin{proposition}
    The inclusion \(\SubOG\to \Sub(G)\) induces a surjective map of rings
    \[
        \Cl(G;\Z)\to \Cl^{\cO}(G;\Z).
    \]
\end{proposition}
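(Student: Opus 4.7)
The plan is to identify the asserted map with restriction of class functions along the inclusion $\SubOG \hookrightarrow \Sub(G)$, and then produce a section by extension by zero.

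First I would observe that since ring operations in both $\Cl(G;\Z)$ and $\Cl^{\cO}(G;\Z)$ are defined pointwise, the map on equivariant functions obtained by restricting along $\SubOG \hookrightarrow \Sub(G)$ is automatically a homomorphism of rings; it is the candidate for the induced map in the statement. Since $G$-equivariance only depends on values on each $G$-orbit of subgroups, the restriction of a $G$-equivariant function on $\Sub(G)$ remains $G$-equivariant on $\SubOG$, so the map is well-defined.

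For surjectivity, I would given any $g \in \Cl^{\cO}(G;\Z)$ define its extension by zero
\[
    \tilde{g}(K) = \begin{cases} g(K) & K \in \SubOG, \\ 0 & K \notin \SubOG. \end{cases}
\]
Then $\tilde g$ restricts to $g$ by construction, so the only thing to check is that $\tilde{g}$ is itself $G$-equivariant as an element of $\Cl(G;\Z)$. This is exactly where I would invoke the fact, already established earlier in the excerpt, that $\SubOG$ is a $G$-invariant subposet of $\Sub(G)$ under conjugation: if $K \in \SubOG$ then every conjugate of $K$ lies in $\SubOG$, and likewise if $K \notin \SubOG$ then no conjugate of $K$ lies in $\SubOG$. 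Thus the two cases in the definition of $\tilde{g}$ are stable under conjugation, and equivariance of $\tilde{g}$ reduces to equivariance of $g$ on $\SubOG$ together with the trivial fact that the zero function is equivariant.

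There is no serious obstacle: the entire content of the statement is the $G$-invariance of $\SubOG$ plus the observation that extension by zero provides an equivariant set-theoretic section of the restriction map. Note that this section is not in general a ring homomorphism, which is why I emphasize that we only need surjectivity as a map of abelian groups on the level of the underlying sets; the ring structure is already taken care of by the restriction map being a homomorphism.
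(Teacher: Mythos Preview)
Your proof is correct and is essentially the same as the paper's. The paper compresses the surjectivity argument into the single remark that ``every sub-$G$-set of a $G$-set is a summand,'' which is exactly your extension-by-zero section unpacked: since $\SubOG$ is a $G$-invariant subset of $\Sub(G)$, the complement is also $G$-invariant, and extending a class function by zero on that complement gives an equivariant preimage.
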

\begin{proof}
    Since \(\SubOG\) is a \(G\)-equivariant subset of \(\Sub(G)\), this is clear. Surjectivity follows from the observation that every sub-\(G\)-set of a \(G\)-set is a summand.
\end{proof}

In fact, we have a preferred section for this projection map, see Corollary \ref{cor: Class Functions Split}. This will be crucial in understanding how subgroups of \(G\) are grouped together.

\begin{proposition}\label{prop: Minimal Admissible for J}
    Given a subgroup \(J\), there is a unique minimal \(H\supseteq J\) such that
    \begin{enumerate}
        \item \(H\in\SubOG\) and
        \item if \(K\in\SubOG\) and \(J\subseteq K\), then \(H\subseteq K\).
    \end{enumerate}
\end{proposition}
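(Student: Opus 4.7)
The plan is to exhibit the desired \(H\) as the intersection of all subgroups in \(\SubOG\) that contain \(J\). Consider the set
\[
S_J = \{K \in \SubOG \mid J \subseteq K\}.
\]
This set is non-empty: the partial order \(\cO\) is reflexive, so \(G \to G\) lies in \(\cO\), hence \(G \in \SubOG\), and \(J \subseteq G\). By the proposition immediately following Definition~\ref{def:Sub_OG} (which restates the Meet Lemma), the inclusion \(\SubOG \hookrightarrow \Sub(G)\) preserves meets, so if \(K_1, K_2 \in S_J\), then \(K_1 \cap K_2 \in \SubOG\) and, trivially, \(J \subseteq K_1 \cap K_2\), placing it in \(S_J\).

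Since \(\Sub(G)\) is finite, so is \(S_J\), and I would define
\[
H = \bigcap_{K \in S_J} K,
\]
which by the above closure under pairwise intersection lies in \(S_J\). By construction \(H\) satisfies (1), and (2) holds because any \(K \in \SubOG\) containing \(J\) is an element of \(S_J\) and thus contains \(H\). For uniqueness, suppose \(H'\) also satisfies (1) and (2); then \(H' \in S_J\), so \(H \subseteq H'\) by (2) for \(H\), while condition (2) applied to \(H'\) and the element \(H \in S_J\) gives \(H' \subseteq H\). Hence \(H = H'\). I expect no serious obstacle here: the whole argument is essentially the same as Corollary~\ref{cor:minimal_transferingtoG}, applied to the upward-closed subposet of subgroups of \(G\) containing \(J\), with the Meet Lemma doing all the real work.
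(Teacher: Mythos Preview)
Your proof is correct and follows essentially the same approach as the paper: both define the set of subgroups in \(\SubOG\) containing \(J\), observe it is non-empty since \(G\) is in it, and take the intersection using the Meet Lemma. You are slightly more explicit about uniqueness, which the paper leaves implicit since condition (2) already characterizes \(H\) as a lower bound for \(S_J\).
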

\begin{proof}
    Consider 
    \[
    \mathcal J=\{H\mid J\subseteq H, H\to G\}.
    \]
    This is a finite, non-empty set (since \(G\) is always in \(\mathcal J\)), all elements of which transfer up to \(G\). This means we have
    \[
        \bigcap_{H\in\mathcal J}H\to G,
    \]
    by the Meet Lemma and \(J\) is visibly in the intersection.
\end{proof}

\begin{notation}
    Let \(H_{\cO}(J)\)\index{Hoperad@\(H_{\cO}(-)\)} be this unique minimal \(H\) for \(J\).
\end{notation}

Minimality actually implies a much stronger condition connecting the normalizers of various subgroups of \(G\).

\begin{proposition}\label{prop: J not in any distinct conjugate of H}
    Let \(J\) be a subgroup of \(G\).
    If \(g\not\in N_G(H_{\cO}(J))\), then \(J\not\subseteq gH_{\cO}(J)g^{-1}\). 
\end{proposition}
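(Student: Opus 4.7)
The plan is to argue by contradiction, using the minimality property of $H_{\cO}(J)$ together with the conjugation invariance of the transfer system $\cO$. Suppose for contradiction that $g \notin N_G(H_{\cO}(J))$ but $J \subseteq gH_{\cO}(J)g^{-1}$. We aim to show that $gH_{\cO}(J)g^{-1}$ satisfies the same two defining properties as $H_{\cO}(J)$, which by minimality will force $H_{\cO}(J) \subseteq gH_{\cO}(J)g^{-1}$, and then a cardinality count will yield the contradiction.

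First I would verify that $gH_{\cO}(J)g^{-1} \in \SubOG$. This is immediate from the conjugation axiom in Definition \ref{def:transfersystem}: since $H_{\cO}(J) \to G$ is in $\cO$ and $gGg^{-1} = G$, we have $gH_{\cO}(J)g^{-1} \to G$ in $\cO$. Next, combining this with the assumed inclusion $J \subseteq gH_{\cO}(J)g^{-1}$, we see that $gH_{\cO}(J)g^{-1}$ belongs to the set $\mathcal J$ appearing in the proof of Proposition~\ref{prop: Minimal Admissible for J}. The minimality characterization of $H_{\cO}(J)$ (condition (2)) then gives
\[
    H_{\cO}(J) \subseteq gH_{\cO}(J)g^{-1}.
\]

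Finally, since $H_{\cO}(J)$ and its conjugate $gH_{\cO}(J)g^{-1}$ have the same finite cardinality, the inclusion above must be an equality. But $H_{\cO}(J) = gH_{\cO}(J)g^{-1}$ is precisely the statement that $g \in N_G(H_{\cO}(J))$, contradicting our hypothesis on $g$. I do not expect any serious obstacle in this argument: the only subtle point is recognizing that one is allowed to invoke conjugation invariance with respect to the ambient group $G$ (and not merely $N_G(H_{\cO}(J))$), which is why the transfer relation $H_{\cO}(J) \to G$ — rather than a relation into some intermediate subgroup — is essential.
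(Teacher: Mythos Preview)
Your proof is correct. It differs slightly from the paper's argument: the paper assumes \(J\subseteq gH_{\cO}(J)g^{-1}\), observes that both \(H_{\cO}(J)\) and \(gH_{\cO}(J)g^{-1}\) transfer to \(G\), and then invokes the Meet Lemma to conclude that their intersection also transfers to \(G\); since \(g\notin N_G(H_{\cO}(J))\) this intersection is strictly smaller than \(H_{\cO}(J)\) yet still contains \(J\), contradicting minimality. You instead bypass the Meet Lemma entirely by applying the minimality property of \(H_{\cO}(J)\) directly to the conjugate \(gH_{\cO}(J)g^{-1}\) to obtain the inclusion \(H_{\cO}(J)\subseteq gH_{\cO}(J)g^{-1}\), and then finish with a cardinality count. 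Both arguments are short; yours is marginally more elementary in that it needs only the defining minimality of \(H_{\cO}(J)\) and conjugation invariance, not the closure of \(\SubOG\) under meets.
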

\begin{proof}
    Both \(H_{\cO}(J)\) and \(gH_{\cO}(J)g^{-1}\) transfer up to \(G\), and so \(J\) would be in the intersection, which is strictly smaller than \(H_{\cO}(J)\) and transfers up to \(G\) by the Meet Lemma.
\end{proof}

\begin{proposition}\label{prop: Retraction is monotone}
    If \(J\subseteq K\), then \(H_{\cO}(J)\subseteq H_{\cO}(K)\).
\end{proposition}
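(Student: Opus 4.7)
My plan is to apply the defining minimality property of $H_{\cO}(J)$ directly, using $H_{\cO}(K)$ as a test subgroup. The key observation is that the second clause of Proposition \ref{prop: Minimal Admissible for J} says $H_{\cO}(J)$ is contained in \emph{every} subgroup $L \in \SubOG$ that contains $J$.

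So the steps are as follows. First, note that by construction $K \subseteq H_{\cO}(K)$ and $H_{\cO}(K) \to G$ is in $\cO$, so $H_{\cO}(K) \in \SubOG$. Second, since $J \subseteq K$ by hypothesis and $K \subseteq H_{\cO}(K)$, transitivity gives $J \subseteq H_{\cO}(K)$. Thus $H_{\cO}(K)$ is a member of the set
\[
\mathcal{J} = \{H \mid J \subseteq H,\ H \to G \text{ in } \cO\}
\]
used in the proof of Proposition \ref{prop: Minimal Admissible for J}. Third, applying the minimality clause of Proposition \ref{prop: Minimal Admissible for J} (with the test subgroup taken to be $H_{\cO}(K)$) yields $H_{\cO}(J) \subseteq H_{\cO}(K)$, as required.

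There is no real obstacle here; the statement is essentially a formal consequence of the universal property of $H_{\cO}(-)$ being a minimal element of a filter of admissible over-groups, and does not require a separate appeal to the Meet Lemma beyond what is already built into Proposition \ref{prop: Minimal Admissible for J}. One could alternatively phrase this as saying that $H_{\cO}$ is a closure operator on $\Sub(G)$ taking values in $\SubOG$, and the proposition is simply its monotonicity, which is automatic for any such closure.
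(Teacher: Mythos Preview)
Your proof is correct and takes essentially the same approach as the paper: the paper observes that \(\{H\in\SubOG \mid K\subseteq H\}\subseteq \{H\in\SubOG \mid J\subseteq H\}\) and deduces the inclusion of minima, while you pick out the single element \(H_{\cO}(K)\) of the smaller set and apply minimality of \(H_{\cO}(J)\) to it directly. These are the same argument, and your closure-operator remark is an accurate summary.
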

\begin{proof}
    Since \(J\subseteq K\), we have an inclusion
    \[
        \big\{H\mid H\in\SubOG, K\subseteq H\big\}\subseteq
        \big\{H\mid H\in\SubOG, J\subseteq H\big\}.
    \]
    This gives the desired inclusion.
\end{proof}

\begin{corollary}
    The assignment
    \[
        J\mapsto H_{\cO}(J)
    \]
    gives a retraction of \(\Sub(G)\) onto \(\SubOG\).
\end{corollary}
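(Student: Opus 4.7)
The plan is essentially to assemble the pieces that have already been set up: a retraction of posets $r\colon \Sub(G)\to \SubOG$ requires a monotone map $r$ whose restriction to $\SubOG$ is the identity. Monotonicity of $J\mapsto H_{\cO}(J)$ is exactly the content of Proposition \ref{prop: Retraction is monotone}, so the work reduces to verifying that the assignment lands in $\SubOG$ and fixes its points.

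First I would note that by construction $H_{\cO}(J)\in\SubOG$: this is clause (1) of Proposition \ref{prop: Minimal Admissible for J}. So the assignment really does define a function $\Sub(G)\to \SubOG$. Then I would check the retraction property: if $J\in \SubOG$ already, then $J$ itself lies in the set $\mathcal J=\{H\mid J\subseteq H, H\to G\}$ used in the proof of Proposition \ref{prop: Minimal Admissible for J}. Since $H_{\cO}(J)$ is characterised as the (unique) minimal element of $\mathcal J$ with respect to inclusion, we get $H_{\cO}(J)\subseteq J$. Combined with the defining containment $J\subseteq H_{\cO}(J)$, this forces $H_{\cO}(J)=J$.

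Putting this together with Proposition \ref{prop: Retraction is monotone} yields a monotone map $\Sub(G)\to\SubOG$ whose restriction to $\SubOG$ is the identity, which is precisely a retraction of posets. There is no real obstacle here; the only thing to double-check is that one is consistent about the direction of monotonicity, since both $\Sub(G)$ and $\SubOG$ are ordered by inclusion (not by the transfer-system order), and the inclusion $\SubOG\hookrightarrow\Sub(G)$ is an order-preserving inclusion of posets — both statements are clear from the definitions.
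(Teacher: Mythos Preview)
Your proof is correct and matches the paper's approach: the paper states this as an immediate corollary without proof, and you have simply spelled out the details implicit in the preceding Propositions~\ref{prop: Minimal Admissible for J} and~\ref{prop: Retraction is monotone}.
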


\begin{corollary}\label{cor: Class Functions Split}
    The projection
    \[
        \Cl(G;\Z)\to \Cl^{\cO}(G;\Z)
    \]\index{class@\(\Cl^{\cO}(G;\Z)\)}
    induced by the inclusion of \(\SubOG\) into \(\Sub(G)\) is split by precomposition with \(H_{\cO}\):
    \[
        H_{\cO}^{\ast}\colon \Cl^{\cO}(G;\Z)\hookrightarrow \Cl(G;\Z).
    \]
\end{corollary}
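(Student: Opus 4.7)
The plan is to verify two things: first, that $H_{\cO}^{\ast}$ actually lands in $\Cl(G;\Z)$ (i.e., preserves $G$-equivariance), and second, that the composition with the restriction to $\SubOG$ is the identity on $\Cl^{\cO}(G;\Z)$.

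For the first point, I would show that the retraction $H_{\cO}\colon \Sub(G)\to \SubOG$ constructed in the previous corollary is itself $G$-equivariant, namely that $H_{\cO}(gJg^{-1}) = gH_{\cO}(J)g^{-1}$ for all $g \in G$ and $J\in \Sub(G)$. This follows from the conjugation axiom for transfer systems: conjugation permutes $\SubOG$, so it sends the set $\mathcal J=\{H\mid J\subseteq H,\; H\to G\}$ bijectively onto the analogous set for $gJg^{-1}$. By uniqueness of the minimum in Proposition \ref{prop: Minimal Admissible for J}, the two minima correspond under conjugation. Given this, if $f\colon \SubOG\to\Z$ is $G$-equivariant, then $f\circ H_{\cO}$ is $G$-equivariant on $\Sub(G)$, so $H_{\cO}^{\ast}$ is well-defined as a map of abelian groups $\Cl^{\cO}(G;\Z)\to \Cl(G;\Z)$.

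For the second point, note that the projection $\Cl(G;\Z)\to \Cl^{\cO}(G;\Z)$ is literally restriction along the inclusion $\SubOG\hookrightarrow \Sub(G)$. So the composition $(\text{restrict})\circ H_{\cO}^{\ast}$ sends $f\in\Cl^{\cO}(G;\Z)$ to $f\circ H_{\cO}|_{\SubOG}$. But for any $K\in\SubOG$, the group $K$ itself belongs to $\mathcal J=\{H\mid K\subseteq H,\; H\to G\}$ and is evidently the minimum. Hence $H_{\cO}(K)=K$, so $H_{\cO}|_{\SubOG}=\id$ and the composition is the identity.

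The main (and only real) obstacle is the $G$-equivariance of $H_{\cO}$, and even this is essentially formal given the conjugation axiom for $\cO$ combined with the uniqueness statement built into Proposition \ref{prop: Minimal Admissible for J}. Everything else is unwinding definitions. Once $H_{\cO}^{\ast}$ is shown to be a well-defined ring map (or at least a well-defined additive splitting; ring structure follows because it is precomposition with a set map, and pointwise multiplication is preserved), the corollary is immediate.
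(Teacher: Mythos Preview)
Your proposal is correct and follows essentially the same approach as the paper, which states the result as an immediate corollary of the preceding retraction statement without explicit proof. You make explicit the $G$-equivariance of $H_{\cO}$, which the paper leaves implicit but is indeed required for $H_{\cO}^{\ast}$ to land in $\Cl(G;\Z)$; your argument for this via the conjugation axiom and uniqueness of the minimum is exactly right.
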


\begin{definition}\label{def:newinseparable}
    We say that two subgroups \(J\) and \(K\) of \(G\) are {\emph{inseparable}} if we have an equality of conjugacy classes
    \[
        (H_{\cO}(J))=(H_{\cO}(K))\in\SubOG/G.
    \]\index{Hoperad@\(H_{\cO}(-)\)}
\end{definition}

Since these are defined as the things identified by some function, inseparability is an equivalence relation. 

\begin{proposition}
    The relation \(H\sim_{\cO} K\) defined by ``\(H\) and \(K\) are inseparable for \(\cO\)'' is an equivalence relation. We denote by \([H]^{\cO}\)\index{H@\([H]^\cO\)} the equivalence class of \(H\) with respect to this relation.
\end{proposition}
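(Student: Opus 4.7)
The plan is to observe that the relation $\sim_{\cO}$ is by construction the pullback of the equality relation along a well-defined function, which automatically makes it an equivalence relation. Specifically, Proposition~\ref{prop: Minimal Admissible for J} gives a well-defined assignment $J \mapsto H_{\cO}(J) \in \SubOG$, and passing to conjugacy classes yields a function
\[
    \Phi \colon \Sub(G) \to \SubOG/G, \qquad J \mapsto (H_{\cO}(J)).
\]
By Definition~\ref{def:newinseparable}, $H \sim_{\cO} K$ holds precisely when $\Phi(H) = \Phi(K)$.

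The three axioms then follow immediately from the corresponding properties of equality on $\SubOG/G$: reflexivity from $\Phi(H) = \Phi(H)$; symmetry from the symmetry of equality; and transitivity from the fact that $\Phi(H) = \Phi(K)$ and $\Phi(K) = \Phi(L)$ imply $\Phi(H) = \Phi(L)$. There is no real obstacle here, as the content is entirely absorbed into the well-definedness of $H_{\cO}$ established earlier; one simply records that any relation of the form ``agrees after applying a fixed function'' is an equivalence relation, and introduces the notation $[H]^{\cO}$ for the equivalence class, which by construction is
\[
    [H]^{\cO} = \Phi^{-1}\bigl((H_{\cO}(H))\bigr) = \{K \in \Sub(G) \mid (H_{\cO}(K)) = (H_{\cO}(H))\}.
\]
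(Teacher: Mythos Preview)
Your proposal is correct and matches the paper's approach exactly: the paper simply remarks, just before stating the proposition, that ``since these are defined as the things identified by some function, inseparability is an equivalence relation,'' which is precisely your observation that $\sim_{\cO}$ is the pullback of equality along $J\mapsto (H_{\cO}(J))$.
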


\begin{corollary}\label{cor:inseperability_representatives}
The inseparability classes for \(\cO\) are in bijective correspondence with conjugacy classes of subgroups \(H\) such that \(H\to G\) is in \(\cO\). 
Moreover, if \(H\to G\) is in \(\cO\), then \(H\) is a maximal element in \([H]^{\cO}\), viewed as a subposet of \(\Sub(G)\), and all maximal elements are conjugate to \(H\). 
\end{corollary}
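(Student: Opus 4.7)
The plan is to build the bijection via the retraction \(H_{\cO}\colon \Sub(G) \to \SubOG\) of Proposition \ref{prop: Retraction is monotone}, and then use the monotonicity of this retraction to establish maximality.

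First, I would observe that \(H_{\cO}\) is not merely a function landing in \(\SubOG\) but is in fact the identity on \(\SubOG\). Indeed, if \(H \to G\) is in \(\cO\), then \(H\) itself lies in the set \(\mathcal J = \{H' \mid H \subseteq H', H' \to G\}\) considered in Proposition \ref{prop: Minimal Admissible for J}, and it is clearly the minimal such element; hence \(H_{\cO}(H) = H\). Consequently, the assignment \(H \mapsto [H_{\cO}(H)] = [H] \in \SubOG/G\) is surjective onto \(\SubOG/G\). Since inseparability is defined precisely as the equivalence relation that identifies \(J\) and \(K\) when \([H_{\cO}(J)] = [H_{\cO}(K)]\), the inseparability classes are in bijection with the image of this assignment, which is all of \(\SubOG/G\). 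This gives the first claim.

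For the maximality statement, I would argue as follows. Fix \(H \to G\) in \(\cO\), and suppose \(K \in [H]^{\cO}\) with \(H \subseteq K\). By definition of inseparability, there is some \(g \in G\) with \(H_{\cO}(K) = gHg^{-1}\). By the monotonicity of \(H_{\cO}\) (Proposition \ref{prop: Retraction is monotone}) and the fact that \(H_{\cO}(H) = H\), the inclusion \(H \subseteq K\) gives \(H = H_{\cO}(H) \subseteq H_{\cO}(K) = gHg^{-1}\). Since \(H\) and \(gHg^{-1}\) have the same finite order, this forces \(H = gHg^{-1} = H_{\cO}(K)\). But by definition \(K \subseteq H_{\cO}(K) = H\), so combined with \(H \subseteq K\) we conclude \(K = H\). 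Thus \(H\) is maximal in \([H]^{\cO}\).

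Neither step is genuinely hard; the bulk of the argument has already been done in the preceding propositions. The only subtlety worth highlighting is keeping straight the distinction between equality and conjugacy of subgroups when invoking the definition of inseparability — this is where one must pass to a chosen representative \(gHg^{-1}\) of \(H_{\cO}(K)\) before applying monotonicity, rather than working with the conjugacy class directly.
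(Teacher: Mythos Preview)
Your argument is correct and is essentially the intended one: the paper states this result as a corollary without proof, treating it as immediate from the retraction \(H_{\cO}\) and its monotonicity (Propositions~\ref{prop: Minimal Admissible for J} and~\ref{prop: Retraction is monotone}), and your write-up simply makes that deduction explicit.
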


Inseparability has also a more intrinsic description when we use
the ghost coordinate map.

\begin{theorem}
    Two subgroups \(J\) and \(K\) are inseparable if and only if for every \(L\in \SubOG\), we have
    \[
        |(G/L)^J|=|(G/L)^{K}|.
    \]
\end{theorem}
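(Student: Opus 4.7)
The plan is to reduce both directions to Corollary~\ref{cor: Separates Points} via a single key identity: for any subgroup $J\subseteq G$ and any $L\in\SubOG$,
\[
(G/L)^{J} \;=\; (G/L)^{H_{\cO}(J)}
\]
as subsets of $G/L$. Once this identification is in hand, both implications are nearly immediate, so essentially all of the work goes into establishing this one observation.

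To prove the identity, I would argue directly with cosets. A coset $gL$ lies in $(G/L)^J$ exactly when $J\subseteq gLg^{-1}$. Since $\SubOG$ is closed under $G$-conjugation, the subgroup $gLg^{-1}$ is again in $\SubOG$, so it belongs to the collection $\mathcal{J}=\{H\in\SubOG : J\subseteq H\}$ used in Proposition~\ref{prop: Minimal Admissible for J} to define $H_{\cO}(J)$. The minimality property of $H_{\cO}(J)$ then forces $H_{\cO}(J)\subseteq gLg^{-1}$. Conversely, if $H_{\cO}(J)\subseteq gLg^{-1}$ then $J\subseteq H_{\cO}(J)\subseteq gLg^{-1}$, giving the reverse inclusion of fixed-point sets.

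For the forward direction, suppose $J$ and $K$ are inseparable, so $H_{\cO}(J)$ and $H_{\cO}(K)$ are $G$-conjugate. Conjugate subgroups have equal fixed-point cardinalities on every $G$-set, hence applying the key identity on both sides gives
\[
|(G/L)^{J}| \;=\; |(G/L)^{H_{\cO}(J)}| \;=\; |(G/L)^{H_{\cO}(K)}| \;=\; |(G/L)^{K}|
\]
for every $L\in\SubOG$. For the reverse direction, the same identity converts the hypothesis into $|(G/L)^{H_{\cO}(J)}| = |(G/L)^{H_{\cO}(K)}|$ for every $L\in\SubOG$. Since $H_{\cO}(J)$ and $H_{\cO}(K)$ both lie in $\SubOG$, Corollary~\ref{cor: Separates Points} applies and yields that they are conjugate in $G$, which is precisely the statement that $J$ and $K$ are inseparable.

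The only real content here is the coset calculation establishing the key identity; there is no genuine obstacle, since conjugation-invariance of $\SubOG$ and the definition of $H_{\cO}(J)$ as the meet of $\mathcal{J}$ combine to make the argument essentially formal. Everything else is a direct appeal to Corollary~\ref{cor: Separates Points}, which is exactly the statement that class-function data on $\SubOG$ detects conjugacy inside $\SubOG$.
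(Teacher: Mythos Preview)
Your proof is correct and follows essentially the same approach as the paper: both establish the identity \((G/L)^{J}=(G/L)^{H_{\cO}(J)}\) via the minimality of \(H_{\cO}(J)\) and the conjugation-closure of \(\SubOG\), and then reduce the biconditional to Corollary~\ref{cor: Separates Points}. The only cosmetic difference is that you phrase the key step as an equality of fixed-point sets rather than of their cardinalities, and you invoke the elementary fact about conjugate subgroups for the forward direction rather than the ``only if'' half of Corollary~\ref{cor: Separates Points}; neither changes the substance.
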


\begin{proof}
   Let \(H=H_{\cO}(J)\). We will first show
     \[
        \big|(G/L)^J\big|=\big|(G/L)^H\big|
    \]
    for any \(L\in\SubOG\).
    Since \(J\subseteq H\), we always have
    \[
        (G/L)^J\supseteq (G/L)^H.
    \]

    For the other direction, consider an arbitrary element \(gL\in (G/L)^J\). By definition, we have
    \[
        J\subseteq gLg^{-1}.
    \]
    Since \(gLg^{-1}\to G\) is in \(\cO\) by the conjugation condition, the minimality of \(H\) implies that \(H\subseteq gLg^{-1}\), and hence \(gL\in (G/L)^H\).

    By Corollary~\ref{cor: Separates Points}, two subgroups \(H_0\) and \(H_1\) in \(\SubOG\)\index{sub@\(\SubOG\)} are conjugate if and only if the functions 
    \[
    L \mapsto \big|(G/L)^{H_0} \big| \quad \textrm{and} \quad L \mapsto \big|(G/L)^{H_1}\big|
    \]   
    from \(\SubOG\) to \(\Z\) agree. 
    Hence, \(J\) and \(K\) are inseparable if and only if the functions 
    \[
    L \mapsto \big|(G/L)^{J} \big|=\big|(G/L)^{H_{\cO}(J)} \big| \quad \textrm{and} \quad L \mapsto \big|(G/L)^{K} \big|=\big|(G/L)^{H_{\cO}(K)}\big|
    \]   
    agree, as desired.
\end{proof}

\begin{ex}
    Recall Example \ref{ex:c_6_Burnside}, where we consider \(G=C_6\). 
    The \(\cO\)-admissible orbits are the trivial ones, \(C_6/C_2\) and \(C_3/C_1\).
    Thus, by calculation of fixed points for the orbits \(C_6/C_2\) and \(C_6/C_6\) we get that \(C_6\) is inseparable from \(C_3\) and \(C_2\) is inseparable from \(C_1\) for \(\cO\).
\end{ex}

Since an \(\cO\)-admissible \(G\)-set is an \(\cO'\)-admissible \(G\)-set
whenever \(\cO \leq \cO'\), the assignment 
\[
    \cO\mapsto \mA^{\cO}(G/G)
\]
gives a functor on the poset of transfer systems. This gives us for any \(\cO\) a map of rings
\[
    \iota^{\cO}\colon \mA^{\cO}(G/G)\to \mA(G/G),
\]\index{iota@\(\iota^{\cO}\)}
and this map is induced by the inclusion of \(\cO\)-admissible \(G\)-sets into all \(G\)-sets. Combining this with the respective ghost coordinate maps gives us a square
\begin{equation}\label{eqn: Commutative Square for Ghosts}
    \xymatrix{ 
        {\mA^{\cO}(G/G)}
            \ar[r]^{\iota^{\cO}} 
            \ar[d] 
            & 
        {\mA(G/G)} 
            \ar[d] 
            \\ 
        {{\Cl^{\cO}(G;\Z)}} 
            \ar[r]_{H_{\cO}^{\ast}} 
            & 
        {{\Cl(G;\Z)}}.
        }
\end{equation}\index{Hoperad@\(H_{\cO}(-)\)}\index{class@\(\Cl^{\cO}(G;\Z)\)}

\begin{corollary}
    The square given in Equation~\ref{eqn: Commutative Square for Ghosts} commutes.
\end{corollary}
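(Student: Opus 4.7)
The plan is to reduce the statement to the key computation that already underlies the inseparability theorem proved just above. Chasing an element \(T\in \mA^{\cO}(G/G)\) around the square, the top-right composite sends \(T\) to itself as a finite \(G\)-set and then to the class function
\[
J \mapsto |T^J|,\qquad J\in \Sub(G),
\]
while the bottom-left composite first records \(L\mapsto |T^L|\) for \(L\in\SubOG\) and then precomposes with the retraction \(H_{\cO}\) to give
\[
J\mapsto |T^{H_{\cO}(J)}|,\qquad J\in\Sub(G).
\]
So commutativity of the square is equivalent to the identity
\[
|T^J|=|T^{H_{\cO}(J)}|
\]
for every subgroup \(J\) of \(G\) and every \(\cO\)-admissible \(G\)-set \(T\).

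Both sides of this identity are additive in \(T\), and \(\mA^{\cO}(G/G)\) is generated as an abelian group by orbits \(G/L\) with \(L\in\SubOG\). Therefore it suffices to verify the identity on each such generator, i.e.\ to show
\[
|(G/L)^J|=|(G/L)^{H_{\cO}(J)}|\quad\text{for all }L\in\SubOG.
\]
This is precisely the statement that was established in the course of proving the preceding theorem on inseparability: the inclusion \((G/L)^{H_{\cO}(J)}\subseteq (G/L)^J\) is automatic since \(J\subseteq H_{\cO}(J)\), and the reverse inclusion uses that any \(gL\in(G/L)^J\) satisfies \(J\subseteq gLg^{-1}\in\SubOG\), so minimality of \(H_{\cO}(J)\) forces \(H_{\cO}(J)\subseteq gLg^{-1}\), i.e.\ \(gL\in(G/L)^{H_{\cO}(J)}\).

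In summary, no new content is required beyond organising the argument and applying additivity: the only nontrivial input is the fixed-point comparison \(|(G/L)^J|=|(G/L)^{H_{\cO}(J)}|\) for \(L\in\SubOG\), which has already been checked. I do not anticipate a genuine obstacle here; the main thing to be careful about is distinguishing the domains of the ghost coordinate maps (\(\SubOG\) versus \(\Sub(G)\)), which is precisely the role played by the retraction \(H_{\cO}\) in the bottom arrow of the square.
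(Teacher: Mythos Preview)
Your proposal is correct and is exactly the argument the paper has in mind: the corollary is stated without proof because the only nontrivial content is the identity \(|(G/L)^J|=|(G/L)^{H_{\cO}(J)}|\) for \(L\in\SubOG\), which was established in the proof of the preceding theorem. Your reduction via additivity on generators is the natural way to make this explicit.
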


When \(\cO=\cO^{max}\), the maximal transfer system for \(G\), then the equivalence relation \(\sim_{\cO}\) is the relation of conjugacy and for every subgroup \(H\), \(e_{[H]^{\cO}}=e_{(H)}\). The commuting of the above diagram allows us to express the idempotents for \(\cO\) in terms of the ones for the terminal transfer system.

\begin{corollary}\label{cor:idempotentformula}
    For each equivalence class \([H]^{\cO}\) of inseparable subgroups, we have 
    \[
        \iota^{\cO}(e_{[H]^{\cO}})=\sum_{(K)\in [H]^{\cO}/G} e_{(K)}
    \]
    in the rational Burnside ring for \(G\), \(\mA_{\Q}(G/G)\). On the right hand side \(e_{(K)}\) denotes the idempotent in the classical rational Burnside ring Mackey functor \(\mA_{\Q}(G/G)\) corresponding to the conjugacy class of the subgroup \(K\). 
\end{corollary}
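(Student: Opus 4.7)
The plan is to exploit the commutative square of Equation~\ref{eqn: Commutative Square for Ghosts}, which lets us pass from the Burnside rings to class functions where the idempotents have very transparent descriptions. First I would recall that under the (rationalized) ghost coordinate map, the idempotent \(e_{[H]^{\cO}}\in \mA^{\cO}_{\bQ}(G/G)\) is by definition sent to the delta class function \(\delta_H\in \Cl^{\cO}(G;\bQ)\), and similarly each \(e_{(K)}\in \mA_{\bQ}(G/G)\) maps to \(\delta_K\in \Cl(G;\bQ)\). Since both vertical maps become isomorphisms after rationalization by Proposition~\ref{prop: markhom}, it is enough to verify the claimed identity after passing down to \(\Cl(G;\bQ)\).

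Next I would compute \(H_{\cO}^{\ast}(\delta_H)\) directly. By definition of the retraction \(H_{\cO}\colon \Sub(G)\to \SubOG\), for any \(J\subseteq G\) we have
\[
    H_{\cO}^{\ast}(\delta_H)(J) = \delta_H\!\bigl(H_{\cO}(J)\bigr),
\]
which equals \(1\) precisely when \(H_{\cO}(J)\) is \(G\)-conjugate to \(H\), and \(0\) otherwise. By the definition of inseparability (Definition~\ref{def:newinseparable}), this condition is exactly \(J\in [H]^{\cO}\). Thus
\[
    H_{\cO}^{\ast}(\delta_H) = \sum_{(K)\in [H]^{\cO}/G} \delta_K
\]
as elements of \(\Cl(G;\bQ)\).

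Finally I would put the two halves together. By commutativity of the square, the image of \(\iota^{\cO}(e_{[H]^{\cO}})\) under the ghost coordinate map equals \(H_{\cO}^{\ast}(\delta_H)\), and the image of \(\sum_{(K)\in [H]^{\cO}/G} e_{(K)}\) under the ghost coordinate map for the terminal transfer system equals \(\sum_{(K)\in [H]^{\cO}/G} \delta_K\). The computation above identifies these two class functions, and since the ghost map for the complete Burnside ring is a rational isomorphism, the identity lifts back to \(\mA_{\bQ}(G/G)\).

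There is not really a hard step here; the only thing to be careful about is unwinding the definition of \(H_{\cO}^{\ast}\) to see that it precisely groups subgroups into inseparability classes, which is why Definition~\ref{def:newinseparable} and Corollary~\ref{cor: Class Functions Split} were set up the way they were.
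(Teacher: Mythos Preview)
Your proof is correct and follows exactly the approach the paper intends: the corollary is stated immediately after the commutative square (Equation~\ref{eqn: Commutative Square for Ghosts}) with the remark that its commutativity ``allows us to express the idempotents for \(\cO\) in terms of the ones for the terminal transfer system,'' and you have simply spelled out that deduction in full.
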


\subsection{Structure of inseparability classes}

Since 
\[
(G/H)^J=\{gH\mid g^{-1}Jg\subseteq H\}=\{gH\mid J \subseteq gHg^{-1}\},
\]
the non-emptiness of the \(J\)-fixed points of an orbit \(G/H\) is equivalent to \(J\) being subconjugate to \(H\). This gives another condition for inseparability.

\begin{proposition}\label{prop: Inseparable and subconjugacy}
	If \(J\) and \(K\) are inseparable for \(\cO\), then for all \(H\to G\) in \(\cO\), \(J\) is subconjugate to \(H\) if and only if \(K\) is subconjugate to \(H\).
\end{proposition}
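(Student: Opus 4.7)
The plan is to derive this as an immediate consequence of the preceding characterization of inseparability via fixed-point counts, combined with the orbit-fixed-point observation that begins the subsection. Specifically, the statement just before the proposition asserts
\[
(G/H)^J = \{gH \mid J \subseteq gHg^{-1}\},
\]
so that the set $(G/H)^J$ is non-empty if and only if $J$ is subconjugate to $H$. Similarly for $K$. So the whole statement reduces to showing that $|(G/H)^J| = 0$ iff $|(G/H)^K| = 0$ whenever $H \to G$ is in $\cO$.

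But this is precisely a special case of the preceding theorem: inseparability of $J$ and $K$ is defined (equivalently characterized) by $|(G/L)^J| = |(G/L)^K|$ for every $L \in \SubOG$. Since by hypothesis $H \to G$ lies in $\cO$, we have $H \in \SubOG$, and so taking $L = H$ the two cardinalities are equal; in particular one vanishes iff the other does. Applying the fixed-point/subconjugacy equivalence to both sides yields the desired biconditional.

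There is no real obstacle here; the work has all been done in the preceding theorem. If one preferred a direct argument bypassing the cardinality characterization, one could instead argue as follows: if $J$ is subconjugate to $H$, say $J \subseteq gHg^{-1}$, then $gHg^{-1} \to G$ by the conjugation axiom of $\cO$, and minimality of $H_{\cO}(J)$ gives $H_{\cO}(J) \subseteq gHg^{-1}$. Conjugacy of $H_{\cO}(J)$ and $H_{\cO}(K)$ in $G$ then produces an element $a$ with $H_{\cO}(K) = aH_{\cO}(J)a^{-1} \subseteq agHg^{-1}a^{-1}$, and since $K \subseteq H_{\cO}(K)$ this shows $K$ is subconjugate to $H$; the reverse implication is symmetric. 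I would write up the short version using the preceding theorem, and perhaps remark on the direct argument for intuition.
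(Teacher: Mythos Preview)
Your proposal is correct and matches the paper's approach exactly: the paper states this proposition immediately after the fixed-point/subconjugacy observation and the theorem characterizing inseparability via \(|(G/L)^J|=|(G/L)^K|\), treating the result as an immediate consequence without even writing a separate proof. Your first argument is precisely the intended one, and your alternative direct argument via \(H_{\cO}\) is also valid.
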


\begin{lemma}
	Let \(K\) and \(J\) be subgroups that are separated by \(\cO\). If \(J\) is subconjugate to \(K\), then any subgroup of \(J\) is also separated from \(K\) by \(\cO\).
\end{lemma}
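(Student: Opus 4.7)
The plan is to reduce the statement to a comparison of the values of the retract $H_{\cO}\colon\Sub(G)\to \SubOG$ of Proposition \ref{prop: Retraction is monotone}, using the characterization of inseparability as equality of $G$-conjugacy classes in $\SubOG/G$ (Definition \ref{def:newinseparable}). The key structural fact I will invoke is that $H_{\cO}$ is conjugation-equivariant, i.e.\ $H_{\cO}(gJg^{-1})=gH_{\cO}(J)g^{-1}$ for all $g\in G$. This follows immediately from Proposition \ref{prop: Minimal Admissible for J} together with the conjugation invariance of $\SubOG$: the bijection $H\mapsto gHg^{-1}$ carries $\{H\in\SubOG\mid J\subseteq H\}$ order-isomorphically onto $\{H\in\SubOG\mid gJg^{-1}\subseteq H\}$, and hence sends unique minimum to unique minimum.

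The argument itself proceeds by contradiction. Fix $g\in G$ with $J\subseteq gKg^{-1}$, witnessing that $J$ is subconjugate to $K$. Combining the equivariance above with the monotonicity of $H_{\cO}$ from Proposition \ref{prop: Retraction is monotone} yields $H_{\cO}(J)\subseteq H_{\cO}(gKg^{-1})=gH_{\cO}(K)g^{-1}$. Now let $L\subseteq J$ and suppose, for contradiction, that $L\sim_{\cO} K$; equivalently, $H_{\cO}(L)$ is $G$-conjugate to $H_{\cO}(K)$, and in particular has the same order as $gH_{\cO}(K)g^{-1}$. Applying monotonicity once more gives the chain
\[
H_{\cO}(L)\subseteq H_{\cO}(J)\subseteq gH_{\cO}(K)g^{-1},
\]
and equality of the outer cardinalities forces equality throughout. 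In particular $H_{\cO}(J)=gH_{\cO}(K)g^{-1}$, so $H_{\cO}(J)$ is $G$-conjugate to $H_{\cO}(K)$, i.e.\ $J\sim_{\cO} K$, contradicting the hypothesis that $J$ and $K$ are separated by $\cO$.

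I do not foresee any genuine obstacle: the proof is essentially a diagram-chase along $H_{\cO}$, and both of its required properties (monotonicity and conjugation-equivariance) are either already stated in the excerpt or are one-line consequences of the definition. The only point of minor care is the cardinality squeeze forcing equality in a nested inclusion of finite subgroups, which is standard.
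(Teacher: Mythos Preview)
Your argument is correct and follows essentially the same route as the paper: both proofs use the monotonicity of \(H_{\cO}\) (Proposition~\ref{prop: Retraction is monotone}) to produce a chain of inclusions in \(\SubOG\) and then observe that a proper inclusion of finite subgroups rules out conjugacy. The only cosmetic difference is that the paper first replaces \(J\) by a conjugate so as to assume \(J\subseteq K\) outright, whereas you carry the conjugating element \(g\) through explicitly via the equivariance \(H_{\cO}(gKg^{-1})=gH_{\cO}(K)g^{-1}\); your explicit verification of that equivariance is a nice touch that the paper leaves tacit in its ``without loss of generality''.
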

\begin{proof}
	Without loss of generality, we may assume \(J\subseteq K\). Since we assume them separated, we must have
    \[
        H_{\cO}(J)\subsetneq H_{\cO}(K).
    \]\index{Hoperad@\(H_{\cO}(-)\)}
    Proposition~\ref{prop: Retraction is monotone} then implies for any subgroup \(J_0\) of \(J\), we have
    \[
        H_{\cO}(J_0)\subseteq H_{\cO}(J)\subsetneq H_{\cO}(K),
    \]
    and thus \(J_0\) and \(K\) are separated.
\end{proof}

\begin{ex} In the result above, it is important that \(J\) is subconjugate to \(K\). To illustrate that, take \(G=C_6\) and a transfer system \(\cO\) given by identity maps, \(C_2 \to C_6\) and \(C_1 \to C_3\). Then \(C_6\) is inseparable from \(C_3\) and \(C_2\) is inseparable from \(C_1\). Also, \(C_6\) is separated from \(C_2\) and \(C_1\). Thus not every subgroup of \(C_6\) is separated from \(C_2\) (specifically the trivial subgroup), but every subgroup of \(C_2\) is separated from \(C_6\).
\end{ex}

\begin{corollary} 
	Let \(H\) be a subgroup of \(G\) such that \(H\to G\) is in \(\cO\). Then a subgroup \(K\) is in \([H]^{\cO}\) if and only if  
	\begin{enumerate}
		\item \(K\) is subconjugate to \(H\) and
		\item there is no \(J\) such that 
  \begin{enumerate}
      \item \(K\) is subconjugate to \(J\),
      \item \(J\) is subconjugate to a proper subgroup of  \(H\), and
      \item \(J\to G\) is in \(\cO\).
  \end{enumerate}  
	\end{enumerate}
\end{corollary}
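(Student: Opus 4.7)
The plan is to reduce the statement to the characterization from Definition~\ref{def:newinseparable} and Corollary~\ref{cor:inseperability_representatives}: since \(H\to G\) lies in \(\cO\), Proposition~\ref{prop: Minimal Admissible for J} gives \(H_{\cO}(H)=H\), so \(K\in[H]^{\cO}\) if and only if \(H_{\cO}(K)\) is \(G\)-conjugate to \(H\). With this in hand the proof comes down to playing the minimality of \(H_{\cO}(K)\) against the hypothesis (2), using conjugation closure of \(\cO\) to move subgroups into position.

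For the forward direction, assume \(K\in[H]^{\cO}\), so \(H_{\cO}(K)\) is conjugate to \(H\) and in particular \(|H_{\cO}(K)|=|H|\). The containment \(K\subseteq H_{\cO}(K)\) gives (1) at once. For (2) I argue by contradiction: given a candidate \(J\), use subconjugacy of \(K\) to \(J\) to replace \(J\) with a conjugate \(J'\) satisfying \(K\subseteq J'\). Then \(J'\to G\) is in \(\cO\) by conjugation closure, so minimality of \(H_{\cO}(K)\) forces \(H_{\cO}(K)\subseteq J'\) and hence \(|H|=|H_{\cO}(K)|\leq|J|\). But subconjugacy of \(J\) to a proper subgroup of \(H\) yields \(|J|<|H|\), the desired contradiction.

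For the converse, assume (1) and (2) hold. By (1) fix \(g\in G\) with \(K\subseteq g^{-1}Hg\); since \(g^{-1}Hg\to G\) lies in \(\cO\), minimality gives \(H_{\cO}(K)\subseteq g^{-1}Hg\). If this inclusion is an equality then \(H_{\cO}(K)\) is conjugate to \(H\) and we are done. Otherwise set \(J:=H_{\cO}(K)\); then \(K\subseteq J\) gives the first clause of (2), \(J\to G\) is in \(\cO\) by construction, and conjugating by \(g\) turns the strict inclusion \(H_{\cO}(K)\subsetneq g^{-1}Hg\) into \(gJg^{-1}\subsetneq H\), witnessing \(J\) as subconjugate to a proper subgroup of \(H\). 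This contradicts (2), forcing the equality case.

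The main obstacle is the conjugation bookkeeping: the three conditions in (2) are only required up to \(G\)-conjugacy, whereas the minimality property of \(H_{\cO}(K)\) is a statement about actual overgroups of \(K\). The strategy in both directions is to absorb one instance of conjugation into a judicious replacement of the auxiliary subgroup (\(J\rightsquigarrow J'\) in one direction, producing the concrete witness \(J=H_{\cO}(K)\) in the other), after which conjugation invariance of \(\cO\) and of the subconjugacy relation carries the remaining structure through cleanly.
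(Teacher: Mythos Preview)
Your proof is correct and is essentially the natural argument from the definition of \(H_{\cO}\) and its minimality property (Proposition~\ref{prop: Minimal Admissible for J}); the paper states this corollary without proof, leaving it as an immediate consequence of the preceding results, and your write-up is exactly the unpacking one would expect.
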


Recall that a \emph{family} is a set of subgroups of \(G\) that is closed under conjugation and taking subgroups. 
Similarly, a \emph{cofamily} is the complement of a family inside the set of subgroups and \emph{relative family} is the intersection of a family with a cofamily.

\begin{corollary}\label{cor: Relative Family}
	The inseparability classes for \(\cO\) are all relative families. In particular, if \(J\in [H]^{\cO}\) and \(J\subset K\subset H\), then \(K\in [H]^{\cO}\).\index{H@\([H]^\cO\)}
\end{corollary}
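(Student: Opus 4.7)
The plan is to verify that each inseparability class $[H]^{\cO}$ is closed under conjugation and is \emph{convex}, meaning that if $J_0 \subseteq L \subseteq J_1$ with $J_0, J_1 \in [H]^{\cO}$, then $L \in [H]^{\cO}$ as well. Granted these two properties, $[H]^{\cO}$ coincides with $\mathcal{F} \cap \mathcal{F}'$, where $\mathcal{F}$ is the downward-closure of $[H]^{\cO}$ in $\Sub(G)$ (a family) and $\mathcal{F}'$ is its upward-closure (a cofamily), exhibiting $[H]^{\cO}$ as a relative family. The ``in particular'' assertion is then the special case $J_1 = H$ of convexity, valid since $H \in [H]^{\cO}$ by Corollary \ref{cor:inseperability_representatives}.

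Closure under conjugation rests on the $G$-equivariance of the retraction $H_{\cO}$. The characterization of $H_{\cO}(J)$ as the meet in $\Sub(G)$ of all supergroups of $J$ which transfer to $G$ (Proposition \ref{prop: Minimal Admissible for J}), together with the conjugation-invariance of $\cO$, yields $H_{\cO}(gJg^{-1}) = g H_{\cO}(J) g^{-1}$ for every $g \in G$. Hence if $H_{\cO}(J)$ is conjugate to $H$, so is $H_{\cO}(gJg^{-1})$.

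For convexity, given $J_0 \subseteq L \subseteq J_1$ with $J_0, J_1 \in [H]^{\cO}$, monotonicity of $H_{\cO}$ (Proposition \ref{prop: Retraction is monotone}) yields a chain
\[
H_{\cO}(J_0) \subseteq H_{\cO}(L) \subseteq H_{\cO}(J_1).
\]
By hypothesis both endpoints are $G$-conjugate to $H$ and hence have cardinality $|H|$, so this chain of inclusions of finite subgroups with equal outer cardinalities must collapse to equalities. In particular $H_{\cO}(L) = H_{\cO}(J_0)$ is conjugate to $H$, placing $L$ in $[H]^{\cO}$. The one mild obstacle is precisely that $H_{\cO}(J_0)$ and $H_{\cO}(J_1)$ might a priori be distinct conjugates of $H$; what rescues the argument is that monotonicity nests them, so the cardinality comparison is enough to force equality.
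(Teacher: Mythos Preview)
Your argument is correct. The convexity step via monotonicity of \(H_{\cO}\) and the cardinality squeeze is clean, and your observation that a conjugation-closed convex subset \(S\subseteq\Sub(G)\) equals the intersection of its downward-closure (a family) with its upward-closure (a cofamily) is exactly right: any \(L\) in that intersection sits in a chain \(J_0\subseteq L\subseteq J_1\) with \(J_0,J_1\in S\), and convexity forces \(L\in S\).

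The paper's intended route is slightly different. The corollary immediately preceding this one characterizes \([H]^{\cO}\) as the set of \(K\) that are (1) subconjugate to \(H\) and (2) not subconjugate to any \(J\) with \(J\to G\) and \(J\) subconjugate to a proper subgroup of \(H\). Condition (1) cuts out the family \(\mathcal F(H)\), and the failure of (2) cuts out a family as well (generated by those \(J\)), so (2) itself is a cofamily condition; hence \([H]^{\cO}\) is visibly a relative family. Your approach bypasses that characterization entirely and works directly from Proposition~\ref{prop: Retraction is monotone}, at the cost of the small extra lemma that convexity plus conjugation-closure yields a relative family. Both are short; yours is arguably more transparent about \emph{which} family and cofamily are in play when one has not already proved the preceding corollary.
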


In view of Corollary \ref{cor:inseperability_representatives} we will always use \(H\) such that \(H\to G\) is in \(\cO\) to represent an equivalence class \([H]^{\cO}\). We use this convention in the following surprising results.

\begin{lemma}\label{lem:Normalisers_for_Kin[H]}
    Suppose \(K\in [H]^{\cO}\) such that \(K\subseteq H\). If \(gKg^{-1}\subseteq H\) for \(g\in G\) then \(g\in N_GH\). In particular, \(N_GK\subseteq N_GH\).
\end{lemma}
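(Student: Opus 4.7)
The plan is to reduce the lemma to Proposition~\ref{prop: J not in any distinct conjugate of H}, which gives exactly the kind of non-containment statement we need but phrased in terms of $H_{\cO}(J)$. The crucial preparatory step is therefore to identify $H_{\cO}(K)$ explicitly with $H$ itself.

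I would argue this identification as follows. Since $H\to G$ is in $\cO$, we have $H\in\SubOG$, and directly from Proposition~\ref{prop: Minimal Admissible for J} we read off $H_{\cO}(H)=H$. The hypothesis $K\subseteq H$, together with $H\in\SubOG$, forces $H_{\cO}(K)\subseteq H$ by the minimality clause of that same proposition. On the other hand, $K\in[H]^{\cO}$ unpacks (via Definition~\ref{def:newinseparable}) to $(H_{\cO}(K))=(H_{\cO}(H))=(H)$ in $\SubOG/G$, so $H_{\cO}(K)$ is $G$-conjugate to $H$ and in particular $|H_{\cO}(K)|=|H|$. Combined with the containment $H_{\cO}(K)\subseteq H$, this forces $H_{\cO}(K)=H$.

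With this in hand, Proposition~\ref{prop: J not in any distinct conjugate of H} applied to $J=K$ reads: if $g_{0}\notin N_{G}H$, then $K\not\subseteq g_{0}Hg_{0}^{-1}$. The hypothesis $gKg^{-1}\subseteq H$ rewrites as $K\subseteq g^{-1}Hg$, so the contrapositive (with $g_{0}=g^{-1}$) yields $g^{-1}\in N_{G}H$, hence $g\in N_{G}H$. The ``in particular'' clause then falls out immediately: any $g\in N_{G}K$ satisfies $gKg^{-1}=K\subseteq H$, and so $g\in N_{G}H$ by what we have just proved.

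I do not anticipate a substantive obstacle. The only delicate point is keeping track of the direction of conjugation when converting $gKg^{-1}\subseteq H$ into $K\subseteq g^{-1}Hg$, so as to match the exact hypothesis of Proposition~\ref{prop: J not in any distinct conjugate of H}. All of the actual content is concentrated in the identification $H_{\cO}(K)=H$, which is really just minimality plus a cardinality observation drawn from the conjugacy of $H_{\cO}(K)$ and $H$.
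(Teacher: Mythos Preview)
Your proof is correct and follows essentially the same approach as the paper: both hinge on the identification \(H_{\cO}(K)=H\) (or equivalently \(H_{\cO}(gKg^{-1})=H\)) via minimality plus cardinality, and then a minimality argument to force \(g\in N_GH\). The only cosmetic difference is that you route the final step through Proposition~\ref{prop: J not in any distinct conjugate of H} as a black box, whereas the paper applies the minimality clause of Proposition~\ref{prop: Minimal Admissible for J} directly to \(gKg^{-1}\subseteq gHg^{-1}\).
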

\begin{proof}
 Take \(g\in G\) such that \(gKg^{-1}\subseteq H\). Since \(K\subseteq H\), \(gKg^{-1}\subseteq gHg^{-1}\). By our assumptions \(H=H_{\cO}(gKg^{-1}) \), so the uniqueness part of Proposition \ref{prop: Minimal Admissible for J} tells us that \(H=gHg^{-1}\) and therefore \(g\in N_GH\).
\end{proof}

In fact, the structure of the relative family 
\[
{[H]^{\cO}}\subseteq\Sub(G)
\]\index{sub@\(\Sub(G)\)}\index{H@\([H]^\cO\)}
is even more rigid. This is acted on by \(G\) by conjugation. Let
\[
    \Sub_{\langle H\rangle}^{\cO}=\{K\mid K\in [H]^{\cO}, K\subseteq H\}.
\]\index{sub@\(\Sub_{\langle H\rangle}^{\cO}\)}

\begin{proposition}\label{prop: tombstones}
    We have an isomorphism of \(G\)-sets
    \[
        G\timesover{N_G(H)} \Sub_{\langle H\rangle }^{\cO}\to {[H]^{\cO}}
    \]
    given by
    \[
        (g,K)\mapsto gKg^{-1}.
    \]
\end{proposition}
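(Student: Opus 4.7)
The plan is to verify each of the four standard ingredients needed to exhibit an isomorphism of $G$-sets: well-definedness on the balanced product, $G$-equivariance, surjectivity, and injectivity. The non-obvious content is concentrated in the last two steps, and both are almost immediate given the results already established in the paper.

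First I would check that the formula $(g,K)\mapsto gKg^{-1}$ descends to the balanced product. The subset $\Sub_{\langle H \rangle}^{\cO} \subseteq [H]^{\cO}$ is stable under $N_G(H)$-conjugation: if $n\in N_G(H)$ and $K\subseteq H$, then $nKn^{-1}\subseteq H$; and because $H_{\cO}$ commutes with conjugation (the transfer system is closed under conjugation), $nKn^{-1}$ still lies in $[H]^{\cO}$. Hence both $(gn,K)$ and $(g,nKn^{-1})$ are sent to $gKg^{-1}=(gn)(n^{-1}Kn)(gn)^{-1}$ conjugated appropriately, so the map is well defined, and it is manifestly $G$-equivariant for the left $G$-action.

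For surjectivity, take $K'\in [H]^{\cO}$. By Definition~\ref{def:newinseparable} we have $H_{\cO}(K')=gHg^{-1}$ for some $g\in G$, and by minimality $K'\subseteq H_{\cO}(K')=gHg^{-1}$. Setting $K=g^{-1}K'g$ gives $K\subseteq H$, and $K\in [H]^{\cO}$ since the inseparability class is conjugation-closed; thus $K\in \Sub_{\langle H\rangle}^{\cO}$ and $(g,K)\mapsto K'$. For injectivity, suppose $gKg^{-1}=g'K'(g')^{-1}$ with $K,K'\in \Sub_{\langle H\rangle}^{\cO}$. Writing $n=g^{-1}g'$ we get $nK'n^{-1}=K\subseteq H$. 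Since $K'\in [H]^{\cO}$ and $K'\subseteq H$, Lemma~\ref{lem:Normalisers_for_Kin[H]} applied to $K'$ and the element $n$ yields $n\in N_G(H)$. Hence $g'=gn$ and $K=nK'n^{-1}$, so $(g,K)$ and $(g',K')$ represent the same class in $G\times_{N_G(H)}\Sub_{\langle H\rangle}^{\cO}$.

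The only step that requires any real content is injectivity, and this is precisely where Lemma~\ref{lem:Normalisers_for_Kin[H]} was designed to be used: its hypothesis ``if $g K g^{-1}\subseteq H$ for $K\in [H]^{\cO}$ with $K\subseteq H$, then $g\in N_G(H)$'' is exactly what prevents a stray element of $G$ from giving a non-trivial identification outside of $N_G(H)$. I do not anticipate any serious obstacle; the whole statement is essentially the orbit--stabiliser theorem applied to the $G$-set $[H]^{\cO}$ with the rigidity provided by Lemma~\ref{lem:Normalisers_for_Kin[H]} playing the role of ``stabiliser equals $N_G(H)$.''
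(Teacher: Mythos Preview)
Your proposal is correct and follows essentially the same approach as the paper: both arguments hinge on Lemma~\ref{lem:Normalisers_for_Kin[H]} for injectivity (the ``stabiliser equals $N_G(H)$'' step) and on the minimality of $H_{\cO}$ (Proposition~\ref{prop: Minimal Admissible for J}) for surjectivity. The paper phrases this as a decomposition of $[H]^{\cO}$ into connected components indexed by conjugates of $H$, while you verify the bijection directly; these are the same argument unpacked differently. One small slip: in your well-definedness check, $(gn,K)$ and $(g,nKn^{-1})$ are both sent to $gnKn^{-1}g^{-1}$, not to $gKg^{-1}$ as you wrote---but the conclusion is unaffected.
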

\begin{proof}
    It follows from Proposition \ref{prop: Minimal Admissible for J} and Lemma \ref{lem:Normalisers_for_Kin[H]} that \({[H]^{\cO}}\) together with subgroup inclusions splits into connected components, one for every conjugate of \(H\), which are permuted by \(G\). To remember the structure of this \(G\)-set it is enough to remember the connected component for \(H\), i.e. \(\{K \mid K\subseteq H \text{\ and\ } K\in [H]^{\cO} \}\) as an \(N_G(H)\)-set.
\end{proof}

\subsection{Disk-like transfer systems}\label{subsec:reducedisklike}

Recall Construction B.1.\ in \cite{RubinDetecting}, which shows how one \emph{generates} a transfer system from a set of transfers: one takes all transfers that can be obtained as a finite composition of restrictions of conjugates of the generating ones. This is the smallest transfer system that contains the starting set of transfers. 

\begin{definition}\label{def:disklike}
    We call a transfer system \(\cO\) \emph{disk-like} if the set of transfers to \(G\) in \(\cO\)
    \[
        \{H_i \to G \mid H_i \subseteq G, i \in I\}
    \]
    generates \(\cO\).
\end{definition}

Rubin showed that in this case, generation has a much simpler form.

\begin{lemma}[{\cite[Lemma A.7]{RubinDetecting}}]\label{lem: disklike generation}
    If \(\cO\) is disk-like, then for any nested subgroups \(K\subseteq H\), we have \(K\to H\) in \(\cO\) if and only if there is a subgroup \(K'\) such that
    \begin{enumerate}
        \item \(K'\to G\) is in \(\cO\) and
        \item \(K'\cap H=K\).
    \end{enumerate}
\end{lemma}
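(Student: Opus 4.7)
The plan is to prove the two implications separately. The ``if'' direction is immediate from the restriction axiom of a transfer system: given $K'\to G$ in $\cO$ with $K'\cap H = K$ and $H\subseteq G$, the restriction axiom yields $(K'\cap H)\to H$ in $\cO$, which is exactly $K\to H$. (Put differently, the candidate set $\cO'$ defined below is automatically a subset of $\cO$.)

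For the ``only if'' direction, I would introduce the candidate set
\[
\cO' = \bigl\{K\to H : K\subseteq H,\ \exists\,K'\subseteq G \text{ with } K'\to G \text{ in } \cO \text{ and } K'\cap H = K\bigr\},
\]
and show that $\cO'$ is itself a $G$-transfer system containing every generator $H_i\to G$ of $\cO$. Since $\cO$ is disk-like, meaning it is the smallest transfer system containing $\{H_i\to G\}$, this forces $\cO\subseteq\cO'$, which is precisely the desired statement.

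The verifications of the transfer-system axioms for $\cO'$ proceed one at a time. Each generator $H_i\to G$ lies in $\cO'$ via the witness $K'=H_i$ (since $H_i\cap G = H_i$); reflexivity $H\to H$ uses $K'=G$; and conjugation invariance is immediate by conjugating the witness. The two substantive verifications are transitivity and the restriction axiom. For transitivity of $K\to L\to H$ in $\cO'$ with witnesses $K_1',K_2' \to G$ satisfying $K_1'\cap L=K$ and $K_2'\cap H=L$, I would take $K'=K_1'\cap K_2'$: the Meet Lemma (Lemma \ref{lem: Meet Lemma}) applied to $K_1',K_2'\to G$ gives $K_1'\cap K_2'\to K_1'$, which composed with $K_1'\to G$ yields $K'\to G$, and then $K'\cap H = K_1'\cap(K_2'\cap H) = K_1'\cap L = K$ provides the required witness. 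The restriction axiom is easier: if $K\to H$ has witness $K'$ and $L\subseteq H$, then the same $K'$ witnesses $(K\cap L)\to L$, since $K'\cap L = (K'\cap H)\cap L = K\cap L$. The main obstacle is really just arranging the transitivity witness correctly — the Meet Lemma is essential here because a naive intersection of subgroups need not transfer up to $G$ — but once that is in hand, minimality of $\cO$ delivers the result immediately.
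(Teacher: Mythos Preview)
Your proof is correct. The paper does not supply its own proof of this lemma; it is simply quoted from \cite[Lemma A.7]{RubinDetecting}. Your argument—showing that the set \(\cO'\) of relations witnessed by some \(K'\to G\) is itself a transfer system containing the generators, with the Meet Lemma handling transitivity—is exactly the standard one and matches Rubin's.
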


\begin{lemma}
    For any transfer system \(\cO\), the set of disk-like transfer systems which map to \(\cO\) has a maximal element \(\cO^d\)\index{operad@\(\cO^d\)}. 
\end{lemma}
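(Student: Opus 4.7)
The plan is to construct $\cO^d$ explicitly as the transfer system generated by all transfers of the form $H \to G$ which happen to lie in $\cO$, and then verify the two requisite properties: that $\cO^d$ is disk-like, contained in $\cO$, and contains every disk-like subsystem of $\cO$.

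First, let $S = \{H \to G \mid H \to G \text{ in } \cO\}$ and let $\cO^d$ be the transfer system generated by $S$ in the sense of Construction B.1 of \cite{RubinDetecting}. By definition $\cO^d$ is disk-like: its set of transfers to $G$ contains $S$ and cannot be larger, since any transfer to $G$ in $\cO^d$ is built from restrictions/conjugations/compositions of elements of $S$, and a restriction or conjugate of a transfer $H \to G$ is a transfer into a proper subgroup unless we compose back up, which would require another generator targeting $G$. So the transfers to $G$ in $\cO^d$ are exactly the $G$-conjugates of elements of $S$, i.e.\ $S$ itself (since $\cO$ is closed under conjugation), and by construction these generate $\cO^d$.

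Second, I would show $\cO^d \subseteq \cO$. Since $\cO$ is itself a transfer system containing $S$, and $\cO^d$ is the \emph{smallest} transfer system containing $S$, containment is immediate.

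Third, for maximality, suppose $\cO'$ is a disk-like transfer system with $\cO' \subseteq \cO$. The set of transfers to $G$ in $\cO'$ is contained in $S$, so the generating set of $\cO'$ sits inside the generating set of $\cO^d$. Taking the transfer system generated by each, we conclude $\cO' \subseteq \cO^d$.

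The only point requiring care is the first one: verifying that the generation process does not introduce new transfers to $G$ beyond conjugates of $S$. This is the main (minor) obstacle, and it follows from Lemma~\ref{lem: disklike generation} applied internally, or more directly from unwinding Construction B.1: a restriction of $K \to G$ along $L \subseteq G$ gives $K \cap L \to L$, which targets $L$ rather than $G$ unless $L = G$; a conjugate of a generator $H \to G$ is $gHg^{-1} \to G$, still of the required form and still in $\cO$; and a composition $A \to B \to G$ whose second factor lies in $S$ automatically presents the composite as a member of $S$ via the first subgroup. Hence $\cO^d$ is disk-like and serves as the required maximal element.
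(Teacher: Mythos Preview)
Your proposal is correct and follows exactly the same approach as the paper: define \(\cO^d\) as the transfer system generated by \(S=\{H\to G\mid H\to G\text{ in }\cO\}\). The paper's proof is a single sentence stating precisely this construction.

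One minor simplification: your verification that \(\cO^d\) is disk-like is more laborious than necessary. You do not need to check that the generation process produces no new transfers to \(G\). Since \(S\) consists only of transfers targeting \(G\), and \(S\) is contained in the set of all transfers to \(G\) in \(\cO^d\), the latter set certainly generates \(\cO^d\) as well; hence \(\cO^d\) is disk-like by definition. Alternatively, once you have established \(\cO^d\subseteq\cO\) (which you do in your second step), any transfer to \(G\) in \(\cO^d\) is automatically a transfer to \(G\) in \(\cO\) and therefore lies in \(S\). Either route removes the need for your final paragraph.
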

\begin{proof}
    This is a direct computation: take the transfer system generated by all transfers \(H\to G\) in \(\cO\). 
\end{proof}
The maximal disk-like transfer system that includes into  \(\cO\) is the transfer system generated by \(\SubOG\). Put another way, we deduce the following result on inseparability.

\begin{proposition}
    Subgroups \(H\) and \(K\) are inseparable for \(\cO\) if and only if they are inseparable for \(\cO^d\)\index{operad@\(\cO^d\)}.
\end{proposition}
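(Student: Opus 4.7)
The plan is to observe that inseparability depends only on the set $\SubOG$ (together with the $G$-action by conjugation), not on the full transfer system $\cO$, and then to show that $\cO$ and $\cO^d$ give the same set of subgroups transferring to $G$.

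First I would verify that $\Sub^{\cO^d}(G) = \SubOG$. The inclusion $\Sub^{\cO^d}(G) \subseteq \SubOG$ is automatic from $\cO^d \subseteq \cO$. Conversely, by construction $\cO^d$ is the transfer system generated by the collection $\{H \to G \mid H \in \SubOG\}$, so every such transfer lies in $\cO^d$, giving the reverse inclusion.

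Next I would recall the definition of $H_\cO(J)$ from Proposition \ref{prop: Minimal Admissible for J}: it is the unique minimal element of the set $\{H \in \SubOG \mid J \subseteq H\}$. Since this set depends only on $\SubOG$ (and not on any other transfers in $\cO$), the previous step gives $H_\cO(J) = H_{\cO^d}(J)$ for every subgroup $J$ of $G$. Using Definition \ref{def:newinseparable}, two subgroups $H$ and $K$ are inseparable for $\cO$ iff $(H_\cO(H)) = (H_\cO(K))$ in $\SubOG/G$, which by the previous identification is equivalent to $(H_{\cO^d}(H)) = (H_{\cO^d}(K))$ in $\Sub^{\cO^d}(G)/G$, i.e.\ inseparability for $\cO^d$.

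There is no real obstacle here; the entire argument reduces to the observation that the construction $J \mapsto H_\cO(J)$, and hence the equivalence relation of inseparability, is insensitive to any transfers in $\cO$ that do not land at $G$. The only subtle point worth flagging is that the conjugation action on $\SubOG$ also coincides for $\cO$ and $\cO^d$ (since conjugation is ambient in $G$ and $\SubOG = \Sub^{\cO^d}(G)$ as $G$-sets), so that passing to conjugacy classes in the definition of inseparability is unambiguous.
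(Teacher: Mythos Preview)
Your proposal is correct and follows exactly the line of reasoning the paper intends: the paper does not give a separate proof, but simply remarks that $\cO^d$ is the transfer system generated by $\SubOG$, from which the proposition is deduced immediately. Your argument just makes explicit the two observations implicit in that remark, namely $\Sub^{\cO^d}(G)=\SubOG$ and that $H_{\cO}(J)$ (hence inseparability) depends only on $\SubOG$.
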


\begin{remark}
    Since the inseparability classes govern the idempotent splitting for \(\cO\)-Mackey functors, see Corollary \ref{cor:inseperability_representatives}, and 
    \[[H]:= [H]^{\cO} = [H]^{\cO^d}\]
    we will get the same list of idempotents for rational \(\cO\)-Mackey functors and rational \(\cO^d\)-Mackey functors. However, the structure of the idempotent pieces
    \[e_{[H]}\mA^\cO_\bQ \dashMod \quad \text{and} \quad e_{[H]}\mA^{\cO^d}_\bQ \dashMod \]
    might be different, as we will see in Section \ref{sec:examples} (see in particular Example \ref{ex:difference_disklike-non-disklike}).
\end{remark}

\begin{proposition}\label{prop:no_transfers_within_class_for_disklike}
    Let \(\cO\) be a disk-like transfer system for \(G\). If \(K,L \in [H]^{\cO}\)\index{H@\([H]^\cO\)}, such that \(K\subsetneq L\) then there is no transfer \(K\to L\) in \(\cO\).
\end{proposition}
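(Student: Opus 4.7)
I would argue by contradiction, so I assume that $K, L \in [H]^{\cO}$ with $K \subsetneq L$ and that $K \to L$ lies in $\cO$. The first move is to identify the ``enlargements'' $H_{\cO}(K)$ and $H_{\cO}(L)$. Since $K$ and $L$ are both inseparable from $H$, both of these enlargements are $G$-conjugate to $H$, and hence of the same order. By Proposition~\ref{prop: Retraction is monotone}, the inclusion $K \subseteq L$ yields $H_{\cO}(K) \subseteq H_{\cO}(L)$, and combined with the equality of cardinalities this forces $H_{\cO}(K) = H_{\cO}(L)$. Call this common subgroup $\tilde H$; it lies in $\SubOG$ and contains both $K$ and $L$.

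The next step is to exploit the disk-like hypothesis. By Lemma~\ref{lem: disklike generation}, the transfer $K \to L$ in $\cO$ must arise from some $K'$ with $K' \to G$ in $\cO$ and $K' \cap L = K$. In particular $K' \in \SubOG$ and $K \subseteq K'$.

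Now I use the defining minimality property of $\tilde H = H_{\cO}(K)$ from Proposition~\ref{prop: Minimal Admissible for J}: any element of $\SubOG$ containing $K$ must contain $H_{\cO}(K)$. Applied to $K'$, this gives $\tilde H \subseteq K'$. Combining with $L \subseteq \tilde H$ yields $L \subseteq K'$, whence
\[
K = K' \cap L = L,
\]
contradicting $K \subsetneq L$. This contradiction proves the proposition.

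The argument is essentially a two-line computation once the right objects are in play; I expect no serious obstacle. The only subtlety is recognizing that the minimal admissible enlargements of $K$ and $L$ coincide on the nose (not merely up to conjugacy), which is where the monotonicity of $H_{\cO}(-)$ and the equality of orders of conjugate subgroups combine crucially.
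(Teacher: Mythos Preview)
Your proof is correct and follows essentially the same strategy as the paper: both invoke Lemma~\ref{lem: disklike generation} to produce a witness \(K'\in\SubOG\) with \(K'\cap L=K\), and then derive a contradiction from the inseparability hypothesis. Your finish is slightly more direct---using the minimality property of \(H_{\cO}(K)\) from Proposition~\ref{prop: Minimal Admissible for J} to force \(\tilde H\subseteq K'\) and hence \(L\subseteq K'\)---whereas the paper passes through the auxiliary subgroup \(K'\cap H\) (implicitly after reducing to \(L\subseteq H\)) to reach the same contradiction.
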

\begin{proof}
    Assume that we have such \(K,L \in [H]^{\cO}\) and a transfer \(K\to L\) in \(\cO\). By Lemma~\ref{lem: disklike generation}, this transfer is a restriction of a transfer \(K' \to G\) with \(L\) and \(K'\cap L=K\). Since \(H\to G\) is in \(\cO\), by restriction \(K'\cap H \to K'\) is also in \(\cO\), and by composition so is \(K'\cap H\to G\). Note that \(K'\cap H \neq H\), since \(K'\cap L=K \neq L\). Therefore \(K \in [K'\cap H]^{\cO} \neq [H]^{\cO}\) which contradicts the fact that \(K,L \in [H]^{\cO}\).
\end{proof}

Of course, for a general transfer system \(\cO\) there might be transfers within \([H]^{\cO}\) as we illustrate below. Proposition \ref{prop: tombstones} shows that transfers within \([H]^{\cO}\) stay within one conjugate of \(H\). To be more precise, if \(J \to K\) and \(J,K \in [H]^{\cO}\), then there exists unique \(gHg^{-1}\) such that \(J,K \subseteq gHg^{-1}\).

The transfers within one inseparability class should be thought of as \emph{weak transfers}, since they do not have the strength to separate this class further. In particular, none of the transfers within an inseparability class of \([H]^{\cO}\) will end in a maximal element of \([H]^{\cO}\).

\begin{ex}\label{ex:transfers_withinClasses}
Consider \(G=C_8\) and a transfer system \(\cO\) illustrated on the picture:

\[
\xymatrix@R+0cm@C+0cm{
C_1 
\ar@/^0.pc/[r]
\ar@/_1.pc/[rr]
\ar@/_2.pc/[rrr]
&
C_2
\ar@/^0.pc/[r]
&
C_4 
&
C_8
}
\]

\vspace{1cm}

The maximal disk-like transfer system \(\cO^d \subseteq \cO\) is the following.

\[
\xymatrix@R+0cm@C+0cm{
C_1 
\ar@/^0.pc/[r]
\ar@/_1.pc/[rr]
\ar@/_2.pc/[rrr]
&
C_2
&
C_4 
&
C_8
}
\]

\vspace{1cm}

Thus, in both cases we have two inseparability classes of subgroups:
\[
[C_1]=\{C_1\}\text{ and }[C_8]=\{C_2,C_4,C_8\}.
\]

In case of the disk-like transfer system \(\cO^d\), there are no transfers within inseparability classes:

\[
\xymatrix@R+0cm@C+0cm{
[C_1] 
&
[C_2
&
C_4 
&
C_8]
}
\]

However, in the case of the transfer system \(\cO\), there is a transfer within \([C_8]\):

\[
\xymatrix@R+0cm@C+0cm{
[C_1] 
&
[C_2
\ar[r]
&
C_4 
&
C_8]
}
\]
 \end{ex}

\section{Subgroups above an inseparability class}\label{sec:transfersinsep}

\subsection{Transfers from subgroups in \texorpdfstring{\([H]^\cO\)}{[H]}}

We will now analyse the transfers in \(\cO\) between subgroups in an inseparability class \([H]^{\cO}\)\index{H@\([H]^\cO\)} and to a given subgroup \(L \subseteq G\). Recall, that in view of Corollary \ref{cor:inseperability_representatives} we will always use \(H\) such that \(H\to G\) is in \(\cO\) to represent an equivalence class \([H]^{\cO}\).

\begin{lemma}\label{lem:NON_Ab_transfer_up_to_L}
    Let \(J\in [H]^{\cO}\), and let \(L\subseteq G\) be such that we have a transfer \(J\to L\). Then
    \begin{enumerate}
        \item for all \(g\in G\), we have transfers \(L\cap gHg^{-1}\to L\),  and
        \item we have a transfer \(J\to L\cap H_{\cO}(J)\).
    \end{enumerate}
\end{lemma}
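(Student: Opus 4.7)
The plan is to obtain both parts as direct applications of the conjugation and restriction closure axioms of a transfer system (Definition \ref{def:transfersystem}), using as the only essential input the fact that, by Corollary \ref{cor:inseperability_representatives}, the chosen representative \(H\) of the class \([H]^{\cO}\) satisfies \(H \to G\) in \(\cO\).

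For part (1), I would first apply the conjugation axiom: since \(H \to G\) is in \(\cO\), so is \((gHg^{-1}) \to (gGg^{-1}) = G\) for every \(g \in G\). Then the restriction axiom applied with \(L \subseteq G\) yields \((gHg^{-1}) \cap L \to L\) in \(\cO\), which is exactly the claim. I should highlight that this argument uses none of the hypotheses on \(J\) or on the existence of the transfer \(J \to L\); it only uses that some conjugate of \(H\) transfers up to \(G\) and that \(L\) is a subgroup of \(G\).

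For part (2), I would apply the restriction axiom directly to the given transfer \(J \to L\), restricting along the subgroup \(L \cap H_{\cO}(J) \subseteq L\). This produces
\[
J \cap \bigl(L \cap H_{\cO}(J)\bigr) \to L \cap H_{\cO}(J)
\]
in \(\cO\). To identify the source with \(J\), I note that \(J \subseteq L\) (because transfer systems refine the subgroup relation, so the existence of the arrow \(J \to L\) forces containment) and \(J \subseteq H_{\cO}(J)\) by the definition of \(H_{\cO}\) in Proposition \ref{prop: Minimal Admissible for J}. Hence \(J \cap L \cap H_{\cO}(J) = J\), and the restriction yields \(J \to L \cap H_{\cO}(J)\) in \(\cO\).

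I do not anticipate any real obstacle; both statements are essentially immediate once one spots the correct subgroup along which to restrict. The only conceptual point worth flagging is the standing convention that the class representative \(H\) lies in \(\SubOG\), which is what lets part (1) invoke \(H \to G\) without extra argument.
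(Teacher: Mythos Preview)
Your proof is correct and follows essentially the same argument as the paper: part (1) restricts \(gHg^{-1}\to G\) along \(L\), and part (2) restricts \(J\to L\) along \(L\cap H_{\cO}(J)\), using \(J\subseteq L\) and \(J\subseteq H_{\cO}(J)\) to identify the source. Your added remark that part (1) requires neither the hypothesis \(J\in[H]^{\cO}\) nor the transfer \(J\to L\) is accurate and worth noting.
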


\begin{proof}
    Since \(gHg^{-1}\to G\), we have \(L\cap gHg^{-1}\to L\) by intersecting it with \(L\). Intersecting \(J\to L\) with 
    \( H_{\cO}(J)\) gives the last condition.
\end{proof}

When \(G\) is abelian, this significantly simplifies.

\begin{lemma}\label{lem:transfer_up_to_L}
    Let \(G\) be abelian, and \(J \in [H]^{\cO}\). If \(L\subseteq G\) is such that we have a transfer \(J\to L\), then
    \begin{enumerate}
        \item we have a transfer \(L\cap H\to L\) and
        \item we have a transfer \(J\to L \cap H\).
    \end{enumerate}
\end{lemma}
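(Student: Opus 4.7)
The plan is to derive this directly from the general (non-abelian) Lemma~\ref{lem:NON_Ab_transfer_up_to_L} by exploiting two simplifications that occur when \(G\) is abelian:
\begin{enumerate}
    \item every subgroup is normal, so conjugation acts trivially and \(gHg^{-1}=H\) for every \(g\in G\);
    \item every \(G\)-conjugacy class of subgroups is a singleton, so \(H_{\cO}(J)\) coincides with \(H\).
\end{enumerate}

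For part (1), I would invoke Lemma~\ref{lem:NON_Ab_transfer_up_to_L}(1) with \(g=e\) (or any element of \(G\)), which immediately yields \(L\cap H\to L\) in \(\cO\) using observation (i).

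For part (2), the key step is to identify \(H_{\cO}(J)\) with \(H\). Recall from Corollary~\ref{cor:inseperability_representatives} that we always choose a representative \(H\) of \([H]^{\cO}\) with \(H\to G\) in \(\cO\). By the definition of inseparability, \(H_{\cO}(J)\) is \(G\)-conjugate to \(H\); since \(G\) is abelian this forces \(H_{\cO}(J)=H\). Substituting this into Lemma~\ref{lem:NON_Ab_transfer_up_to_L}(2) gives the transfer \(J\to L\cap H\).

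There is no real obstacle here: the statement is essentially the specialization of Lemma~\ref{lem:NON_Ab_transfer_up_to_L} to the abelian setting, and the only thing to verify is the identification \(H_{\cO}(J)=H\), which is forced by triviality of conjugation. The lemma is included to streamline the abelian examples in Section~\ref{sec:examples}, where the cleaner statement (no conjugates to keep track of) is the version one actually uses.
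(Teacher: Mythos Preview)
Your proposal is correct and matches the paper's approach: the paper presents Lemma~\ref{lem:transfer_up_to_L} immediately after Lemma~\ref{lem:NON_Ab_transfer_up_to_L} with the sentence ``When \(G\) is abelian, this significantly simplifies'' and no separate proof, so specialising via \(gHg^{-1}=H\) and \(H_{\cO}(J)=H\) is exactly what is intended. Your justification that \(H_{\cO}(J)=H\) (using the standing convention \(H\to G\in\cO\), whence \(H_{\cO}(H)=H\), together with triviality of conjugation) is the right way to make the implicit step explicit.
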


\begin{proposition}
    If \(K\in [H]^{\cO}\), \(K\subseteq H\) and \(K\subseteq L\), then \(H\cap L\to L\) and \(H\cap L\in [H]^{\cO}\).
\end{proposition}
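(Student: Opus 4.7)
The plan is to observe that both conclusions follow almost immediately from earlier results, once we set up the right inclusions.

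For the first conclusion, I would invoke the restriction axiom of a transfer system directly. Since $H$ represents the class $[H]^{\cO}$, we have $H\to G$ in $\cO$ by Corollary~\ref{cor:inseperability_representatives}. Applying the restriction axiom to this transfer and the subgroup $L\subseteq G$ yields $H\cap L\to L$ in $\cO$.

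For the second conclusion, I would use the fact that inseparability classes are relative families, Corollary~\ref{cor: Relative Family}. Since $K\subseteq H$ and $K\subseteq L$ by hypothesis, we have the chain of inclusions
\[
K\subseteq H\cap L\subseteq H.
\]
Both $K$ and $H$ lie in $[H]^{\cO}$, so Corollary~\ref{cor: Relative Family} (applied to the intermediate subgroup $H\cap L$) forces $H\cap L\in [H]^{\cO}$.

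There is essentially no obstacle here: the first part is a one-line application of the restriction axiom, and the second part is a one-line application of the relative family property of inseparability classes. The only mild subtlety is being careful that $H$ is indeed chosen so that $H\to G$ is in $\cO$, which is guaranteed by the standing convention recorded just before Lemma~\ref{lem:Normalisers_for_Kin[H]}.
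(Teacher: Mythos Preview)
Your proposal is correct and follows essentially the same approach as the paper: the paper's proof says ``Intersecting $H\to G$ with $L$ gives the first part. Since $K\subseteq H\cap L$, the second part is Corollary~\ref{cor: Relative Family},'' which is exactly your two-step argument via the restriction axiom and the relative family property.
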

\begin{proof}
    Intersecting \(H\to G\) with \(L\) gives the first part. Since \(K\subseteq H\cap L\), the second part is Corollary~\ref{cor: Relative Family}.
\end{proof}

We can restate this as saying that if \(L\) contains some subgroup in \([H]^{\cO}\), then \(L\) can be connected to \([H]^{\cO}\) by some transfer.

\begin{definition}\label{def:aboveH}
    We will say that \(L\) is \emph{above} \([H]^{\cO}\), denoted by \(L\geq [H]^{\cO}\)\index{LaboveH@\(L\geq [H]^{\cO}\)}, if there is a \(K\in [H]^{\cO}\) with \(K\subseteq L\).
\end{definition}

Lemmas \ref{lem:NON_Ab_transfer_up_to_L} and \ref{lem:transfer_up_to_L} can be reframed in terms of \([H]^{\cO}\), viewed as a subposet of \(\Sub(G)\) under \(\cO\).

\begin{corollary}\label{cor:NON_AB_weaklyTerminal}
    If \(L\geq [H]^{\cO}\), then the subposet of elements of \([H]^{\cO}\) which transfer to \(L\) is non-empty and has maximal elements
    \[
        \big\{L\cap gHg^{-1}\mid g\in G\text{ and }L\cap gHg^{-1}\in [H]^{\cO}\big\}.
    \]
\end{corollary}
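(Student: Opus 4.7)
The plan is to check non-emptiness, then exhibit the listed intersections as the maximal elements by dominating every element of the subposet on one side and ruling out anything strictly above them on the other. Both parts of Lemma~\ref{lem:NON_Ab_transfer_up_to_L} do most of the work, while Proposition~\ref{prop: tombstones} supplies the rigidity needed for maximality.

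For non-emptiness, I unpack \(L\geq [H]^{\cO}\) to pick \(K\in [H]^{\cO}\) with \(K\subseteq L\). By Proposition~\ref{prop: tombstones}, \(K\) sits inside a unique conjugate \(gHg^{-1}\). The restriction axiom applied to \(gHg^{-1}\to G\) produces \(L\cap gHg^{-1}\to L\), and the chain \(K\subseteq L\cap gHg^{-1}\subseteq gHg^{-1}\) together with Corollary~\ref{cor: Relative Family} places \(L\cap gHg^{-1}\) in \([H]^{\cO}\). The same reasoning via Lemma~\ref{lem:NON_Ab_transfer_up_to_L}(1) shows that every intersection \(L\cap gHg^{-1}\) that lies in \([H]^{\cO}\) is already in the subposet. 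To see every element is dominated by one of these, take \(J\in [H]^{\cO}\) with \(J\to L\) and set \(H_{\cO}(J)=gHg^{-1}\); Lemma~\ref{lem:NON_Ab_transfer_up_to_L}(2) yields \(J\to L\cap gHg^{-1}\), while \(J\subseteq L\cap gHg^{-1}\subseteq gHg^{-1}\) and Corollary~\ref{cor: Relative Family} ensure \(L\cap gHg^{-1}\in [H]^{\cO}\).

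It remains to show each listed intersection is actually maximal in the subposet. Suppose there is a transfer \(L\cap gHg^{-1}\to J'\) with \(J'\) in the subposet, and write \(H_{\cO}(J')=g'Hg'^{-1}\). Then \(L\cap gHg^{-1}\) is contained in both \(gHg^{-1}\) and \(g'Hg'^{-1}\); since it lies in \([H]^{\cO}\), the uniqueness-of-containing-conjugate statement implicit in Proposition~\ref{prop: tombstones} (made explicit by Lemma~\ref{lem:Normalisers_for_Kin[H]}) forces \(gHg^{-1}=g'Hg'^{-1}\). Combining \(J'\subseteq L\cap g'Hg'^{-1}=L\cap gHg^{-1}\) with the hypothesis \(L\cap gHg^{-1}\subseteq J'\) gives \(J'=L\cap gHg^{-1}\). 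The only delicate point in the whole argument is this uniqueness step; everything else reduces to the restriction axiom for transfer systems and the relative-family closure of \([H]^{\cO}\) from Corollary~\ref{cor: Relative Family}.
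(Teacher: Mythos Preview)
Your argument is correct and follows the same route the paper intends: the paper treats this corollary as a direct reformulation of Lemma~\ref{lem:NON_Ab_transfer_up_to_L}, and you unpack exactly that, using part~(1) to place each listed intersection in the subposet and part~(2) to dominate an arbitrary element. The maximality step you supply via Proposition~\ref{prop: tombstones} (unique containing conjugate of \(H\)) is the right ingredient and is left implicit in the paper's terse treatment, so your version is in fact a bit more complete.
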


\begin{corollary}\label{cor:weaklyTerminal}
    If \(G\) is abelian and \(L\geq [H]^{\cO}\), then the subposet of elements of \([H]^{\cO}\) which transfers to \(L\) has a maximum: \(L\cap H\).
\end{corollary}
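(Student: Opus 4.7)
The plan is to show that $L\cap H$ does the job by specializing the structural results of the non-abelian corollary to the abelian setting, where conjugation is trivial and so the set of candidate maximal elements collapses to the single element $L\cap H$.

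First, I would verify that $L\cap H$ lies in $[H]^{\cO}$. By hypothesis $L\geq [H]^{\cO}$, so there exists $K\in [H]^{\cO}$ with $K\subseteq L$. Since $G$ is abelian, $N_G(H)=G$, and Proposition~\ref{prop: tombstones} identifies $[H]^{\cO}$ with $\Sub_{\langle H\rangle}^{\cO}$; in particular every member of $[H]^{\cO}$ is contained in $H$. Thus $K\subseteq H$, and combined with $K\subseteq L$ we get $K\subseteq L\cap H\subseteq H$. Corollary~\ref{cor: Relative Family}, which tells us that inseparability classes are relative families, then forces $L\cap H\in [H]^{\cO}$.

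Second, I would confirm that $L\cap H$ actually transfers to $L$: applying the restriction axiom to $H\to G$ against the subgroup $L$ yields $L\cap H\to L$ in $\cO$ (this is also part (1) of Lemma~\ref{lem:transfer_up_to_L}). So $L\cap H$ is genuinely an element of the subposet under consideration.

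Finally, I would prove the maximum property. Given any $J\in [H]^{\cO}$ with $J\to L$ in $\cO$, part (2) of Lemma~\ref{lem:transfer_up_to_L} furnishes the transfer $J\to L\cap H$ in $\cO$; since transfers refine the subset relation, this gives $J\subseteq L\cap H$. Hence $L\cap H$ is an upper bound for the subposet, and because it is itself in $[H]^{\cO}$ and transfers to $L$, it is the maximum. There is no real obstacle here: the corollary is essentially the abelian specialization of Corollary~\ref{cor:NON_AB_weaklyTerminal}, and the only thing requiring verification beyond trivializing conjugation is the membership $L\cap H\in [H]^{\cO}$, which the relative family property supplies immediately.
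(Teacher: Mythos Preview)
Your proof is correct and follows essentially the same approach as the paper: the paper states this corollary without proof, presenting it as the abelian specialization of Corollary~\ref{cor:NON_AB_weaklyTerminal} via Lemma~\ref{lem:transfer_up_to_L}, and your argument is precisely how one would write out that implicit reasoning. The only minor difference is that you verify \(L\cap H\in[H]^{\cO}\) via Proposition~\ref{prop: tombstones} and the relative family property, whereas the paper has this fact packaged in the unnamed proposition immediately preceding Definition~\ref{def:aboveH}; both routes are equivalent.
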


The intersection of \(L\) with the conjugates of \(H\) will be used often, so we give particular notation to it.

\begin{notation}\label{not:LgH}
    If \(H,L\subseteq G\) and \(g\in G\), then let 
    \[
        L_g^{H}:=L\cap gHg^{-1}.
    \]\index{Lintersect@\(L_g^{H}\)}
    If \(H\) is clear from context, we will drop it from the notation.
\end{notation}

\begin{notation}\label{not:weakly_terminal_set}
    If \(L\geq [H]^{\cO}\), then let 
    \[
    T_{L\geq [H]^{\cO}}  = \big\{L_g^{H} \mid g\in G\text{ and }L_g^{H} \in [H]^{\cO}\big\}
    \]\index{transfer@\(T_{L\geq [H]^{\cO}}\)}
    be the set of maximal elements in the subposet of subgroups in \([H]^{\cO}\) which transfer to \(L\).
\end{notation}

\begin{proposition}\label{prop:N_GLsetT}
    Conjugation gives an action of \(N_G(L)\) on \(T_{L\geq [H]^{\cO}}\).
\end{proposition}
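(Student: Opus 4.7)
The plan is a direct computation: I need to show that for any $n \in N_G(L)$ and any element $L_g^H \in T_{L \geq [H]^{\cO}}$, the conjugate $n L_g^H n^{-1}$ is again in $T_{L \geq [H]^{\cO}}$, and that this defines a group action.

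First I would unwind the definitions. Using Notation~\ref{not:LgH} and the fact that $n$ normalizes $L$, I compute
\[
n L_g^H n^{-1} = n(L \cap gHg^{-1})n^{-1} = nLn^{-1} \cap (ng)H(ng)^{-1} = L \cap (ng)H(ng)^{-1} = L_{ng}^H.
\]
So conjugation by $n$ sends the indexed family element at $g$ to the one at $ng$; in particular, the result is again of the form $L_{g'}^H$ for $g' = ng \in G$.

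Next I need to verify that $L_{ng}^H$ lies in $[H]^{\cO}$. By hypothesis $L_g^H \in [H]^{\cO}$, and since $L_{ng}^H = n L_g^H n^{-1}$ is a $G$-conjugate of $L_g^H$, it suffices to observe that inseparability classes are closed under $G$-conjugation. This follows from the definition of inseparability (Definition~\ref{def:newinseparable}): the retraction $H_{\cO}$ is $G$-equivariant (since $\SubOG$ is a $G$-equivariant subposet and the minimality in Proposition~\ref{prop: Minimal Admissible for J} is conjugation-invariant), so $H_{\cO}(nKn^{-1}) = n H_{\cO}(K) n^{-1}$ and inseparability is defined in terms of conjugacy classes in $\SubOG$. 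Hence $L_{ng}^H \in [H]^{\cO}$, which gives $n L_g^H n^{-1} \in T_{L \geq [H]^{\cO}}$.

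Finally, the action axioms are immediate: the identity of $N_G(L)$ acts trivially by construction, and for $n_1, n_2 \in N_G(L)$ the equality $(n_1 n_2) L_g^H (n_1 n_2)^{-1} = n_1 (n_2 L_g^H n_2^{-1}) n_1^{-1}$ is simply associativity of conjugation. There is no serious obstacle here; the only point that requires any thought is the closure of $[H]^{\cO}$ under conjugation, which is built into the framework via the conjugation axiom for transfer systems.
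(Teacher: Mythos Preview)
Your proof is correct and follows essentially the same approach as the paper. The paper's argument is the same computation $n(L_g^H)n^{-1}=L_{ng}^H$ together with the observation that inseparability classes are conjugation invariant; you have simply filled in more detail, including the verification of the action axioms, which the paper leaves implicit.
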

\begin{proof}
    Let \(n\in N_G(L)\). Since inseparable classes are conjugation invariant, if \(L_{g}^{G} \in [H]^{\cO}\), then we must have 
    \[
    n(L_{g}^{H})n^{-1}=L\cap ngHg^{-1}n^{-1}=L_{ng}^{H} \in [H]^{\cO}.
    \]
\end{proof}

The partitioning of \([H]^{\cO}\) into the conjugate pieces \(\Sub_{\langle H\rangle}^{\cO}\) also partitions the subset \(T_{L\geq [H]^{\cO}}\) of \([H]^{\cO}\).

\begin{proposition}\label{prop: TLgeqH maps to GmodNH}
    Each conjugate of \(\Sub_{\langle H\rangle}^{\cO}\)\index{sub@\(\Sub_{\langle H\rangle}^{\cO}\)} contains at most one element of \(T_{L\geq [H]^{\cO}}\)\index{transfer@\(T_{L\geq [H]^{\cO}}\)}. 
\end{proposition}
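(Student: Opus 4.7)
The plan is to unpack what it means for an element of $[H]^{\cO}$ to lie in a conjugate of $\Sub_{\langle H\rangle}^{\cO}$, then use the rigidity provided by Lemma~\ref{lem:Normalisers_for_Kin[H]} to pin down the conjugate of $H$ that any element of $T_{L\geq [H]^{\cO}}$ sits inside.

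First I would identify, using Proposition~\ref{prop: tombstones}, the conjugate $g\Sub_{\langle H\rangle}^{\cO}g^{-1}$ with the set
\[
\big\{K\in [H]^{\cO}\mid K\subseteq gHg^{-1}\big\}.
\]
Thus an element $L_{h}^{H}=L\cap hHh^{-1}$ of $T_{L\geq [H]^{\cO}}$ lies in this conjugate precisely when $L_{h}^{H}\subseteq gHg^{-1}$, while by construction $L_{h}^{H}\subseteq hHh^{-1}$ as well.

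The key step is to show that any $K\in [H]^{\cO}$ is contained in a unique conjugate of $H$. Suppose $K\subseteq g_1Hg_1^{-1}$ and $K\subseteq g_2Hg_2^{-1}$. Set $K'=g_1^{-1}Kg_1\subseteq H$; then $K'\in [H]^{\cO}$ by conjugation invariance of inseparability classes, and
\[
(g_2^{-1}g_1)K'(g_2^{-1}g_1)^{-1}=g_2^{-1}Kg_2\subseteq H.
\]
Lemma~\ref{lem:Normalisers_for_Kin[H]} applied to $K'$ then forces $g_2^{-1}g_1\in N_G(H)$, so $g_1Hg_1^{-1}=g_2Hg_2^{-1}$.

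Applying this to $K=L_{h}^{H}$, the containments $L_{h}^{H}\subseteq hHh^{-1}$ and $L_{h}^{H}\subseteq gHg^{-1}$ force $hHh^{-1}=gHg^{-1}$, and hence $L_{h}^{H}=L\cap gHg^{-1}$. Since this expression depends only on $g$ (not on $h$), any two elements of $T_{L\geq [H]^{\cO}}$ lying in the same conjugate $g\Sub_{\langle H\rangle}^{\cO}g^{-1}$ coincide. I expect the only subtle point to be the bookkeeping in the uniqueness argument for the conjugate of $H$; once that is in hand, the conclusion is immediate.
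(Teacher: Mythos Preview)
Your proof is correct and follows essentially the approach the paper implicitly relies on: the proposition is stated without proof in the paper, but it is an immediate consequence of Proposition~\ref{prop: tombstones}, which already asserts that the map \((g,K)\mapsto gKg^{-1}\) gives a bijection \(G\times_{N_G(H)}\Sub_{\langle H\rangle}^{\cO}\to [H]^{\cO}\), i.e.\ that every element of \([H]^{\cO}\) lies in a unique conjugate of \(\Sub_{\langle H\rangle}^{\cO}\). You have unpacked this uniqueness directly via Lemma~\ref{lem:Normalisers_for_Kin[H]}, which is exactly the lemma underlying the proof of Proposition~\ref{prop: tombstones}; the remaining observation that \(L_{h}^{H}\subseteq gHg^{-1}\) forces \(L_{h}^{H}=L\cap gHg^{-1}\) is the same in both routes.
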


Using the unexpected functoriality of the normalizers for subgroups in \([H]^{\cO}\), we can prove a stronger statement.

\begin{proposition}\label{prop: Normalizer in L of LgH}
    If \(L\geq [H]^{\cO}\)\index{LaboveH@\(L\geq [H]^{\cO}\)} and \(L_g^H\in T_{L\geq [H]^{\cO}}\), then 
    \[
        N_L(L_g^H)=N_G(gHg^{-1})\cap L.
    \]
\end{proposition}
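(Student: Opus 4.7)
The plan is to prove equality by showing both inclusions separately. The containment $N_G(gHg^{-1})\cap L\subseteq N_L(L_g^H)$ is essentially formal: an element $n\in L\cap N_G(gHg^{-1})$ normalizes $L$ trivially (it lies in $L$) and normalizes $gHg^{-1}$ by hypothesis, so it normalizes their intersection $L_g^H$, and it already lies in $L$, so it lies in $N_L(L_g^H)$. I would dispatch this in a single sentence.

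The real content is the reverse containment $N_L(L_g^H)\subseteq N_G(gHg^{-1})\cap L$. Given $n\in N_L(L_g^H)$, the only nontrivial thing to show is $n\in N_G(gHg^{-1})$. The key is to recognize that $L_g^H$ satisfies the hypotheses of Lemma~\ref{lem:Normalisers_for_Kin[H]} after translating by $g$. Indeed, set $K=g^{-1}L_g^Hg = (g^{-1}Lg)\cap H$. Then $K\subseteq H$ and, since $[H]^{\cO}$ is closed under conjugation and $L_g^H\in [H]^{\cO}$, we have $K\in [H]^{\cO}$. The assumption that $n$ normalizes $L_g^H$ translates to $(g^{-1}ng)K(g^{-1}ng)^{-1}=K\subseteq H$. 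Lemma~\ref{lem:Normalisers_for_Kin[H]}, applied with $K$ as above and with the element $g^{-1}ng$ in the role of the ``$g$'' of that lemma, then yields $g^{-1}ng\in N_G(H)$, i.e.\ $n\in gN_G(H)g^{-1}=N_G(gHg^{-1})$.

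The main obstacle to anticipate is verifying that $K$ genuinely lies in $[H]^{\cO}$ and is contained in $H$, so that Lemma~\ref{lem:Normalisers_for_Kin[H]} is actually applicable; once those two features are in hand the rigidity provided by that lemma does all the work. No appeal to Proposition~\ref{prop: Minimal Admissible for J} is needed directly, as it was already absorbed into Lemma~\ref{lem:Normalisers_for_Kin[H]}. The whole proof should therefore be short, essentially a conjugation-and-cite argument.
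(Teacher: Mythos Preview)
Your proof is correct and follows essentially the same approach as the paper: both prove the two inclusions separately, with the easy direction handled by the observation that an element of $L\cap N_G(gHg^{-1})$ normalizes both factors of the intersection, and the hard direction by invoking Lemma~\ref{lem:Normalisers_for_Kin[H]}. The only cosmetic difference is that the paper first notes $H_{\cO}(L_g^H)=gHg^{-1}$ and applies the lemma directly with $gHg^{-1}$ in the role of $H$, whereas you conjugate by $g^{-1}$ to reduce to the literal statement of the lemma; these are the same argument.
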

\begin{proof}
    By construction, we have
    \[
        H_{\cO}(L_g^{H})=gHg^{-1},
    \]
    so by Lemma~\ref{lem:Normalisers_for_Kin[H]}, we have
    \[
        N_G(L_g^H)\subseteq N_G(gHg^{-1}).
    \]
    Intersecting this with \(L\) gives
    \[
        N_L(L_g^{H})=L\cap N_G(L_g^H)\subseteq L\cap N_G(gHg^{-1}).
    \]
    On the other hand, if \(\ell\in L\cap N_G(gHg^{-1})\), then \(\ell\) normalizes both \(L\) and \(gHg^{-1}\). Hence it normalizes their intersection, giving the other inclusion.
\end{proof}

When \(G\) is abelian and \(\cO\) is disk-like, then we can make a further simplification.

\begin{corollary}
    Suppose \(G\) is an abelian group. If \(\cO\) is a disk-like transfer system then for every \(L\geq [H]^{\cO}\), there is a unique element \(J\in [H]^{\cO}\) such that there is a transfer \(J \to L\).
\end{corollary}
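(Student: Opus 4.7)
My plan is to combine Corollary~\ref{cor:weaklyTerminal} (the existence of a unique maximum in the transferring subposet when $G$ is abelian) with Proposition~\ref{prop:no_transfers_within_class_for_disklike} (the absence of nontrivial transfers inside a single inseparability class when $\cO$ is disk-like). Together these should pin down a single element.

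First I would appeal to Corollary~\ref{cor:weaklyTerminal}: since $G$ is abelian, all conjugates $gHg^{-1}$ coincide with $H$, so the set $T_{L\geq[H]^{\cO}}$ of Notation~\ref{not:weakly_terminal_set} degenerates to at most the singleton $\{L\cap H\}$, and the corollary says the subposet (under $\cO$) of elements of $[H]^{\cO}$ which transfer to $L$ has maximum $L\cap H$. In particular $L\cap H\in [H]^{\cO}$ and $L\cap H\to L$ is in $\cO$.

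Next, suppose $J\in [H]^{\cO}$ is any element with a transfer $J\to L$ in $\cO$. By the maximum property just invoked, $J$ lies in the subposet whose maximum is $L\cap H$, which means there is a transfer $J\to L\cap H$ in $\cO$, and in particular $J\subseteq L\cap H$.

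At this point both $J$ and $L\cap H$ belong to the same inseparability class $[H]^{\cO}$, and $\cO$ is disk-like, so Proposition~\ref{prop:no_transfers_within_class_for_disklike} forbids a strict inclusion carrying a transfer inside $[H]^{\cO}$. Hence $J=L\cap H$, giving uniqueness. There is no real obstacle here: the only subtle point is making sure Corollary~\ref{cor:weaklyTerminal} is applied correctly so that the transferring subposet really does have $L\cap H$ as its maximum (which uses the abelian hypothesis) before invoking the disk-like hypothesis to collapse the rest of the subposet to that maximum.
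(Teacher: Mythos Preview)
Your proof is correct and follows essentially the same approach as the paper. The paper's proof cites Lemma~\ref{lem:transfer_up_to_L}, Corollary~\ref{cor: Relative Family}, and Proposition~\ref{prop:no_transfers_within_class_for_disklike}; you instead invoke Corollary~\ref{cor:weaklyTerminal} (which packages Lemma~\ref{lem:transfer_up_to_L} and the relative-family statement together as the existence of a maximum in the transferring subposet) before applying Proposition~\ref{prop:no_transfers_within_class_for_disklike} in the same way, so the logical content is identical.
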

\begin{proof}
    This follows from Lemma \ref{lem:transfer_up_to_L}, Corollary \ref{cor: Relative Family} and Proposition \ref{prop:no_transfers_within_class_for_disklike}.
\end{proof}

\begin{remark}
Note that being above \([H]^{\cO}\) does not translate well to inseparability classes, that is if \(K\geq [H]^{\cO}\) and \(L\) is inseparable from \(K\), it does not imply that \(L\) is above \([H]^{\cO}\), see Part 6 of Example \ref{ex:list_for_C6}. There \(C_6\) is above \([C_3]\), but \(C_2\), which is inseparable from \(C_6\), is not above \([C_3]\).
\end{remark}

Now consider the situation of two subgroups \(K,L\), such that \(K\subseteq L\) and \(K,L \geq [H]^{\cO}\).

\begin{proposition}\label{prop:Claim2}
    Suppose \(K\subseteq L\) and \(K \geq [H]^{\cO}\). For every \(g\in G\) such that \(K_g^{H}\in[H]^{\cO}\) we have
    \begin{enumerate}
        \item \(L_g^{H}\in[H]^{\cO}\), and 
        \item \(L_{g}^{H}\)\index{Lintersect@\(L_g^{H}\)} is the unique subgroup in the set \(T_{L\geq [H]^{\cO}}\)\index{transfer@\(T_{L\geq [H]^{\cO}}\)} that contains \(K_g^{H}\).
    \end{enumerate}
\end{proposition}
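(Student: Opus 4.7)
\medskip

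\noindent\textbf{Proof plan.} The plan is to prove the two claims in turn, using as the two main tools the fact that inseparability classes are relative families (Corollary~\ref{cor: Relative Family}) together with the rigidity result Proposition~\ref{prop: J not in any distinct conjugate of H} on when a fixed subgroup can sit inside conjugates of \(H\).

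For part (1), I would first observe that \(gHg^{-1}\in [H]^{\cO}\) because inseparability classes are \(G\)-conjugation invariant, and that \(gHg^{-1}\to G\) lies in \(\cO\) by the conjugation axiom for transfer systems. Since \(K\subseteq L\), we have the chain
\[
K_g^{H}\subseteq L_g^{H}\subseteq gHg^{-1},
\]
and since \(K_g^{H}\) and \(gHg^{-1}\) both lie in \([H]^{\cO}\), the relative family property (Corollary~\ref{cor: Relative Family}, applied to the conjugate \(gHg^{-1}\) of \(H\)) forces \(L_g^{H}\in [H]^{\cO}\). To see that \(L_g^{H}\) in fact belongs to \(T_{L\geq [H]^{\cO}}\), note that restricting \(gHg^{-1}\to G\) along \(L\) gives the transfer \(L_g^{H}=L\cap gHg^{-1}\to L\), so \(L_g^{H}\) is among the maximal elements described in Corollary~\ref{cor:NON_AB_weaklyTerminal}.

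For part (2), suppose \(L_{g'}^{H}\in T_{L\geq [H]^{\cO}}\) also contains \(K_g^{H}\); the aim is to show \(L_{g'}^{H}=L_g^{H}\). The key observation is that \(H_{\cO}(K_g^{H})=gHg^{-1}\): indeed \(K_g^{H}\subseteq gHg^{-1}\) with \(gHg^{-1}\in \SubOG\), and since \(K_g^{H}\in [H]^{\cO}\) we have \(|H_{\cO}(K_g^{H})|=|H|=|gHg^{-1}|\), forcing equality by the minimality of \(H_{\cO}(K_g^{H})\). Now \(K_g^{H}\subseteq L_{g'}^{H}\subseteq g'Hg'^{-1}\); writing \(g'=hg\) and applying Proposition~\ref{prop: J not in any distinct conjugate of H} to \(J=K_g^{H}\) yields \(h\in N_G(gHg^{-1})\), and hence \(g'Hg'^{-1}=gHg^{-1}\). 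Intersecting with \(L\) gives \(L_{g'}^{H}=L_g^{H}\), proving uniqueness.

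The two parts are essentially formal consequences of previously established structural results, so I do not expect a real obstacle; the only step that needs a little care is identifying \(H_{\cO}(K_g^{H})\) as exactly \(gHg^{-1}\) (rather than merely a conjugate of \(H\) contained in \(gHg^{-1}\)), which is what makes Proposition~\ref{prop: J not in any distinct conjugate of H} applicable and gives the uniqueness in (2).
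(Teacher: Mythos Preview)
Your proof is correct and follows essentially the same approach as the paper. Part~(1) is identical: both use the chain \(K_g^H\subseteq L_g^H\subseteq gHg^{-1}\) together with Corollary~\ref{cor: Relative Family}. For part~(2) the paper simply cites Proposition~\ref{prop: TLgeqH maps to GmodNH} (that each conjugate piece \(g\cdot\Sub_{\langle H\rangle}^{\cO}\) contains at most one element of \(T_{L\geq [H]^{\cO}}\)), whereas you unpack this by identifying \(H_{\cO}(K_g^H)=gHg^{-1}\) and invoking Proposition~\ref{prop: J not in any distinct conjugate of H} directly; this is exactly the content underlying Proposition~\ref{prop: TLgeqH maps to GmodNH}, so the difference is only in packaging.
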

\begin{proof}
    It is clear that we have
    \[
    K_{g}^{H}\subseteq L_{g}^{H}\subseteq gHg^{-1},
    \]
    and hence Corollary~\ref{cor: Relative Family} gives the first part. The uniqueness follows from Proposition~\ref{prop: TLgeqH maps to GmodNH}.
\end{proof}
    
\begin{corollary}
    If \(K\geq [H]^{\cO}\) and \(K\subseteq L\), then the assignment
    \[
        K_{g}^{H}\mapsto L_{g}^{H}
    \]\index{Lintersect@\(L_g^{H}\)}
    gives an injective function
    \[
        \iota\colon T_{K\geq [H]^{\cO}}\to T_{L\geq [H]^{\cO}}.
    \]\index{transfer@\(T_{L\geq [H]^{\cO}}\)}
    Moreover, this function is \(K\)-equivariant for the conjugation action.
\end{corollary}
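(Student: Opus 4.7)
The main issue is that the target of the assignment is written in terms of the parameter \(g\), while the source \(T_{K\geq [H]^{\cO}}\) is a set of subgroups of \(G\); so the very first thing to verify is that the rule \(K_{g}^{H}\mapsto L_{g}^{H}\) genuinely depends only on the subgroup \(K_{g}^{H}\) and not on the choice of \(g\). For this, suppose \(K_{g}^{H}=K_{g'}^{H}\) with both in \([H]^{\cO}\). Since each is in the inseparability class of \(H\), Proposition~\ref{prop: Minimal Admissible for J} (together with the conjugation invariance of \(\cO\)) gives
\[
gHg^{-1}=H_{\cO}(K_{g}^{H})=H_{\cO}(K_{g'}^{H})=g'Hg'^{-1},
\]
so intersecting with \(L\) yields \(L_{g}^{H}=L_{g'}^{H}\). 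This makes \(\iota\) a well-defined function on subgroups. That its image lies in \(T_{L\geq [H]^{\cO}}\) is exactly the content of part (1) of Proposition~\ref{prop:Claim2}.

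For injectivity, suppose \(\iota(K_{g}^{H})=\iota(K_{g'}^{H})\), i.e.\ \(L_{g}^{H}=L_{g'}^{H}\). By the same uniqueness argument, both \(gHg^{-1}\) and \(g'Hg'^{-1}\) equal \(H_{\cO}(L_{g}^{H})\), hence they coincide. Intersecting the resulting equality \(gHg^{-1}=g'Hg'^{-1}\) with \(K\) instead of \(L\) gives \(K_{g}^{H}=K_{g'}^{H}\), as required. Equivalently, one can invoke the uniqueness statement in part (2) of Proposition~\ref{prop:Claim2}: \(L_{g}^{H}\) is the unique element of \(T_{L\geq [H]^{\cO}}\) containing \(K_{g}^{H}\), so \(\iota\) has an explicit left inverse on its image, given by selecting the (unique) element of \(T_{K\geq [H]^{\cO}}\) contained in a fixed \(L_{g}^{H}\).

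Finally, for \(K\)-equivariance, let \(k\in K\) and note that \(k\) lies in \(L\) since \(K\subseteq L\), so \(k\) normalizes both \(K\) and \(L\). A direct computation gives
\[
k\,K_{g}^{H}\,k^{-1}=kKk^{-1}\cap kgHg^{-1}k^{-1}=K\cap (kg)H(kg)^{-1}=K_{kg}^{H},
\]
and identically \(k\,L_{g}^{H}\,k^{-1}=L_{kg}^{H}\). Hence
\[
\iota\bigl(k\,K_{g}^{H}\,k^{-1}\bigr)=\iota\bigl(K_{kg}^{H}\bigr)=L_{kg}^{H}=k\,L_{g}^{H}\,k^{-1}=k\,\iota(K_{g}^{H})\,k^{-1}.
\]
The only non-routine ingredient throughout is the uniqueness assertion for \(H_{\cO}(-)\) from Proposition~\ref{prop: Minimal Admissible for J}, which is what converts equalities of intersections into equalities of the conjugates \(gHg^{-1}\); everything else is a formal manipulation of intersections and of elements normalizing their containing subgroups. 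I do not anticipate a significant obstacle.
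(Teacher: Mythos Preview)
Your proof is correct and follows essentially the same approach as the paper. The paper's own proof is terser: it regards well-definedness and injectivity as already implicit in Proposition~\ref{prop:Claim2} (part~(2) characterizes \(L_g^H\) intrinsically as the unique element of \(T_{L\geq[H]^{\cO}}\) containing \(K_g^H\), which simultaneously gives well-definedness and, via Proposition~\ref{prop: TLgeqH maps to GmodNH}, injectivity), and then argues equivariance from this uniqueness together with the fact that \(K\) normalizes both \(K\) and \(L\). You make the same argument explicit by passing through the identity \(H_{\cO}(K_g^H)=gHg^{-1}\) and by computing the conjugation formula directly rather than invoking uniqueness; the substance is the same.
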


\begin{proof}
    The only thing to show is the equivariance. Here, we use that \(K\) is contained in the normalizers in \(G\) both of itself and of \(L\). The result follows from uniqueness.
\end{proof}

\begin{proposition}\label{prop:Claim1}
    Suppose \(K\subseteq L\) and \(K,L \geq [H]^{\cO}\).\index{LaboveH@\(L\geq [H]^{\cO}\)} Then for every \(g\in G\) such that \(K_{g}^{H} \in [H]^{\cO}\) we have \(N_K(K_{g}^{H}) \subseteq N_L(L_{g}^{H})\).
\end{proposition}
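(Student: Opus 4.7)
The plan is to reduce both normalizers to intersections with $N_G(gHg^{-1})$ via the rigidity result Proposition~\ref{prop: Normalizer in L of LgH}, after which the containment is immediate from $K\subseteq L$.

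First I would verify the hypotheses needed to apply Proposition~\ref{prop: Normalizer in L of LgH} to $L_g^H$. By assumption $K_g^H\in [H]^{\cO}$, and Proposition~\ref{prop:Claim2} (using $K\subseteq L$ and $K\geq [H]^{\cO}$) then tells us that $L_g^H\in [H]^{\cO}$ and lies in the set $T_{L\geq [H]^{\cO}}$. Since $K_g^H\in T_{K\geq [H]^{\cO}}$ by hypothesis, Proposition~\ref{prop: Normalizer in L of LgH} applies in both cases and yields
\[
    N_K(K_g^H) = N_G(gHg^{-1})\cap K \quad\text{and}\quad N_L(L_g^H) = N_G(gHg^{-1})\cap L.
\]

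Given these two identities, the inclusion $N_K(K_g^H)\subseteq N_L(L_g^H)$ is just the observation that intersecting with $N_G(gHg^{-1})$ preserves the inclusion $K\subseteq L$. That is, if $k\in N_G(gHg^{-1})\cap K$, then $k\in L$ and $k\in N_G(gHg^{-1})$, so $k\in N_G(gHg^{-1})\cap L$.

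There is no real obstacle here; the content has already been done in Proposition~\ref{prop: Normalizer in L of LgH}, which identifies the normalizer of an element of $T_{L\geq[H]^{\cO}}$ inside $L$ with the \emph{ambient} normalizer $N_G(gHg^{-1})$ cut down by $L$. That identification is what makes the normalizers manifestly functorial in $L$ (as $L$ varies over subgroups that are above $[H]^{\cO}$ and contain a fixed $K_g^H$), and the present proposition is simply the monotonicity of this construction in its second variable. The only care needed is in checking that Proposition~\ref{prop:Claim2} actually provides $L_g^H\in T_{L\geq[H]^{\cO}}$ from the assumption on $K_g^H$, which it does.
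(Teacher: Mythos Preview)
Your proposal is correct and follows exactly the same approach as the paper: invoke Proposition~\ref{prop:Claim2} to ensure \(L_g^H\in T_{L\geq [H]^{\cO}}\), then apply Proposition~\ref{prop: Normalizer in L of LgH} to identify both normalizers as intersections with \(N_G(gHg^{-1})\), whence the inclusion follows from \(K\subseteq L\). Your write-up is in fact slightly more explicit than the paper's in verifying the hypotheses, but the argument is identical.
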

\begin{proof}
By Propositions \ref{prop:Claim2} and \ref{prop: Normalizer in L of LgH} we have \(N_K(K_g^H)=N_G(gHg^{-1})\cap K\) and \(N_L(L_g^H)=N_G(gHg^{-1})\cap L\), thus \(N_K(K_g^H) \subseteq N_L(L_g^H)\).
\end{proof}

\subsection{Idempotents and restrictions}

As well as understanding how transfers from an inseparability class behave, 
we need to understand how restrictions interact with our idempotents.

\begin{lemma}\label{lem:idempotentrestrictzero}
Let \(H \to G \in \cO\). The idempotent
\(e_{[H]^{\cO}} \in {\mA^{\cO}_{\bQ}(G/G)}\)\index{e@\(e_{[H]^{\cO}}\)}
restricts to zero in \({\mA^{\cO}_{\bQ}(G/K)}\)
whenever \(K\) is not above \([H]^{\cO}\).
\end{lemma}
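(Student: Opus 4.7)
The plan is to use the injectivity of the ghost coordinate map (Proposition~\ref{prop: markhom}) together with Corollary~\ref{cor:idempotentformula}, which identifies $\iota^{\cO}(e_{[H]^{\cO}})$ with a sum of classical Burnside idempotents. Ghost coordinates behave simply under restriction: if $T\in\mA^{\cO}_{\bQ}(G/G)$ and $L\subseteq K$, then $|\res^{G}_{K}(T)^{L}|=|T^{L}|$, so restriction corresponds to restriction of class functions.

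First, I would observe that $\iota^{\cO}\colon\mA^{\cO}\to\mA$ commutes with the restriction $\res^{G}_{K}$, since it is induced by viewing $\cO$-admissible $G$-sets as arbitrary $G$-sets, an operation which is manifestly compatible with restriction from $G$ to $K$. Combined with Corollary~\ref{cor:idempotentformula}, this yields
\[
\iota^{i_{K}^{*}\cO}\bigl(\res^{G}_{K}(e_{[H]^{\cO}})\bigr)=\res^{G}_{K}\Bigl(\sum_{(K')\in [H]^{\cO}/G} e_{(K')}\Bigr)
\]
in the classical rational Burnside ring $\mA_{\bQ}(G/K)=\mA_{\bQ}(K/K)$.

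Next, I would compute the ghost of the right-hand side. For each $(K')$, the ghost of $e_{(K')}\in\mA_{\bQ}(G/G)$ is the indicator $\delta_{K'}\in\Cl(G;\bQ)$ of the conjugacy class of $K'$, so the ghost of the sum is the indicator of the set $[H]^{\cO}\subseteq\Sub(G)$. Passing through $\res^{G}_{K}$ this becomes the class function on $\Sub(K)$ given by the indicator of $[H]^{\cO}\cap\Sub(K)$. By Definition~\ref{def:aboveH}, the hypothesis that $K$ is not above $[H]^{\cO}$ says exactly that $[H]^{\cO}\cap\Sub(K)=\emptyset$, so this indicator is identically zero.

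Finally, the ghost coordinate map $\mA^{i_{K}^{*}\cO}_{\bQ}(K/K)\to\Cl^{i_{K}^{*}\cO}(K;\bQ)$ is obtained by evaluating $L\mapsto|T^{L}|$ on $L\in\Sub^{i_{K}^{*}\cO}(K)\subseteq\Sub(K)$, and this map is an isomorphism by Proposition~\ref{prop: markhom} applied to $K$ with transfer system $i_{K}^{*}\cO$. Since every ghost coordinate of $\res^{G}_{K}(e_{[H]^{\cO}})$ vanishes by the previous paragraph, injectivity gives $\res^{G}_{K}(e_{[H]^{\cO}})=0$. No step is particularly subtle; the only delicate point is confirming that ghost coordinates commute with restriction in the incomplete setting, which reduces to the tautological identity $|T^{L}|=|\res^{G}_{K}(T)^{L}|$ for $L\subseteq K$.
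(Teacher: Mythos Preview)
Your proof is correct and follows essentially the same approach as the paper's: both pass to the complete Burnside ring using Corollary~\ref{cor:idempotentformula}, observe that the restricted class function vanishes on \(\Sub(K)\) because \([H]^{\cO}\cap\Sub(K)=\emptyset\), and conclude via injectivity of the ghost map. The only cosmetic difference is that the paper packages the comparison through the commutative square of Equation~\ref{eqn: Commutative Square for Ghosts} (using \(H_{\cO}^{\ast}\) on class functions and Corollary~\ref{cor: Class Functions Split} for injectivity), whereas you work directly with \(\iota^{\cO}\) and ghost coordinates; these are equivalent by that same square.
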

\begin{proof}
    Let \(K\) be a subgroup of \(G\) that is not above \([H]^{\cO}\). 
    Consider the commutative diagram below, where the vertical
    maps are the restrictions. 
    \[
    \xymatrix{
        {\mA^{\cO}_{\bQ}(G/G)} 
            \ar[r]^\cong 
            \ar[d] 
            &
        {{\Cl^{\cO}(G;\Q)}}  
            \ar[r]^{H_{\cO}^{\ast}} 
            &
        {{\Cl(G;\Q)}}
            \ar[d] 
            \\
        {\mA^{\cO}_{\bQ}(G/K)} 
            \ar[r]^\cong 
            &
        {{\Cl^{\cO}(K;\Q)}}  
            \ar[r]_{H_{\cO}^{\ast}} 
            &
        {{\Cl(K;\Q)}}
    }
    \]
    By Corollary \ref{cor:idempotentformula}
    \(e_{[H]^{\cO}} \in \mA^{\cO}_{\bQ}(G/G)\) is sent to 
    \[
    \sum_{(K)\in [H]^{\cO}/G} e_{(K)} \in \Cl(G;\Q)
    \]
    which restricts to zero in \({{\Cl(K;\Q)}}\) as
    \(\Sub(K)\) contains no element of \([H]^{\cO}\).

    As the lower horizontal maps are injections (see Corollary \ref{cor: Class Functions Split}) 
    it follows that \(e_{[H]^{\cO}}\) restricts to zero
    in  \({\mA^{\cO}_{\bQ}(G/K)}\).
\end{proof}
We invite the reader to prove this result directly from Lemma \ref{lem:idempotentformula}.

\begin{corollary}\label{cor: Idempotents are Above H}
    If \(L\) is not above \([H]^{\cO}\), then for any rational \(\cO\)-Mackey functor \(\mM\), we have
    \[
        e_{[H]^\cO}\mM(G/L)=0.
    \]
\end{corollary}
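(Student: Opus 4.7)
The plan is to observe that this corollary is essentially a module-theoretic restatement of Lemma~\ref{lem:idempotentrestrictzero}. A rational \(\cO\)-Mackey functor \(\mM\) is, via the equivalence \(\OMackey_\bQ \cong \mA^\cO_\bQ \dashMod\), a module over the rational \(\cO\)-Burnside Mackey functor. Concretely, for each subgroup \(L\), the value \(\mM(G/L)\) is an \(\mA^\cO_\bQ(G/L)\)-module, and the global ring \(\mA^\cO_\bQ(G/G)\) acts on \(\mM(G/L)\) through the restriction map
\[
    \res^G_L \colon \mA^\cO_\bQ(G/G) \to \mA^\cO_\bQ(G/L).
\]
This is exactly the same compatibility that underlies the definition \((e\mA^\cO_\bQ)(G/L) = \res^G_L(e)\,\mA^\cO_\bQ(G/L)\) used in Corollary~\ref{cor:splitting O Mackey}.

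Given this, I would proceed in two short steps. First, I would invoke Lemma~\ref{lem:idempotentrestrictzero} to conclude that if \(L\) is not above \([H]^{\cO}\), then \(\res^G_L(e_{[H]^\cO}) = 0\) in \(\mA^\cO_\bQ(G/L)\). Second, I would note that the summand \(e_{[H]^\cO}\mM(G/L)\) is by definition the image of the action of \(e_{[H]^\cO}\) on \(\mM(G/L)\), which factors through \(\res^G_L(e_{[H]^\cO})\). Since the latter is zero, the action is zero, and hence \(e_{[H]^\cO}\mM(G/L) = 0\).

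There is really no obstacle here; the only point to be careful about is making the relationship between the \(\mA^\cO_\bQ(G/G)\)-action on \(\mM(G/L)\) and the restriction map explicit, so that the vanishing at the level of the Burnside ring transports cleanly to vanishing on an arbitrary module. This is a standard feature of module structures over Green/Mackey functors and follows directly from the Day convolution description of the symmetric monoidal structure used to identify \(\OMackey_\bQ\) with \(\mA^\cO_\bQ \dashMod\).
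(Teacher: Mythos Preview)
Your proposal is correct and is precisely the argument the paper intends: the corollary is stated immediately after Lemma~\ref{lem:idempotentrestrictzero} with no separate proof, because the passage from ``\(\res^G_L(e_{[H]^\cO})=0\)'' to ``\(e_{[H]^\cO}\mM(G/L)=0\)'' is exactly the module-theoretic step you spell out. There is nothing to add.
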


\section{Mackey functors on and above an inseparability class}\label{sec:description_of_idempotent_pieces}

By Corollary~\ref{cor: Idempotents are Above H}, the idempotent summand \(e_{[H]^{\cO}}\mM\) of a rational \(\cO\)-Mackey functor \(\mM\) vanishes on any orbit \(G/L\) with \(L\) not above \([H]^{\cO}\). We analyze the category of \(\cO\)-Mackey functors concentrated above \([H]^{\cO}\) and determine the essential image of the idempotent localization \(e_{[H]^{\cO}}\).

\begin{definition}
Define a category \emph{above-\([H]^{\cO}\)-\(\Mackey^G\)} to be the full subcategory of \(\cO\)-Mackey functors spanned by those \(\mM\) such that \(\mM(G/K)=0\) unless \(K \geq [H]^{\cO}\). We denote this category by \(\AboveHMackey[{[H]^{\cO}}]^G\)\index{above@\(\AboveHMackey[{[H]^{\cO}}]^G\)}. 
\end{definition}

\subsection{\texorpdfstring{\([H]^\cO\mhyphen\)}{[H]-}Mackey functors}

\begin{definition}\label{def:A_(H)}
Let \(\Fin^{G}_{(H)}\)\index{finite@\(\Fin^{G}_{(H)}\)} denote the full subcategory of \(\Fin^G\)\index{finite@\(\Fin^G\)} spanned by those finite \(G\)-sets \(T\) such that for all \(t\in T\), \(\Stab(t)\) is subconjugate to \(H\).

Let \(\cA^{\cO}_{(H)}\)\index{A@\(\cA^{\cO}_{(H)}\)} denote the full subcategory of \(\cA^{\cO}\)\index{A@\(\cA^{\cO}\)} spanned by objects \(T\in\Fin^{G}_{(H)}\).
\end{definition}

\begin{remark}\label{rem:structure_of_(H)-Mackey}
Given any \(H\leq G\), we have an associated family of subgroups, \(\mathcal F(H)\), namely the family of subgroups of \(G\) subconjugate to \(H\). A family of subgroups is the same data as a sieve in the orbit category, and these are necessarily closed under pullbacks. This shows that \(\cA^{\cO}_{(H)}\) is equivalently the ordinary \(\cO\)-Lindner category for \(\Fin^{G}_{(H)}\).
\end{remark}

\begin{definition}
We define the category of \(\cO\)-Mackey functors on the family \(\mathcal F(H)\) to be the category of product preserving functors
\[
    \cA^{\cO}_{(H)}\to \Ab.
\]
Denote this category by \(\OMackey^{G}_{\mathcal F(H)}\)\index{omackey@\(\OMackey^G_{\mathcal F(H)}\)}.
\end{definition}

\begin{remark}
    The category \(\OMackey^{G}_{\mathcal F(H)}\) is in general {\emph{not}} equivalent to the expected \(i_H^\ast\OMackey^{H}\). The category of \(\cO\)-Mackey functors on \(\mathcal F(H)\) has conjugation maps for arbitrary elements of \(G\), so in particular, we see an equivariance for the normalizer of \(H\) in \(G\) that we do not see for a general \(i_H^\ast\cO\)-Mackey functor for \(H\).
\end{remark}

Note that we have a natural fully-faithful inclusion
\[
    \iota\colon \cA^{\cO}_{(H)}\hookrightarrow \cA^{\cO}
\]
that commutes with the categorical product. This gives a restriction on Mackey functors.

\begin{proposition}\label{prop:restrictiontriple}
    The restriction functor
    \[
    \restrict{}{(H)}\colon \OMackey^{G}\to\OMackey^{G}_{\mathcal F(H)}
    \]
    given by precomposition with \(\iota\) has a left adjoint \(\IndH[(H)]\) given by the coend
    \[
        \IndH[(H)]\mM = \int^{G/K \in \cA^{\cO}_{(H)}} \cA^{\cO} (G/K, -) \otimes \mM(G/K)
    \]
    and a right adjoint \(\CoIndH[(H)]\) given by the end
    \[
        \CoIndH[(H)]\mM =  \int_{G/K \in \cA^{\cO}_{(H)}} \hom\big( \cA^{\cO} (-, G/K), \mM(G/K) \big)
    \]
    Moreover, for \(\mM \in \OMackey^{G}_{\mathcal F(H)}\) the unit 
    \[
        \mM \longrightarrow \restrict{}{(H)} \IndH[(H)] \mM
    \]
    and the counit 
    \[
        \restrict{}{(H)} \CoIndH[(H)] \mM \longrightarrow \mM
    \]
    are isomorphisms.
\end{proposition}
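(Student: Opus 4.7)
The plan is to recognize the functors $\IndH[(H)]$ and $\CoIndH[(H)]$ as, respectively, the left and right Kan extensions along the fully faithful inclusion $\iota$, and then invoke the standard theory of Kan extensions. The two displayed formulas are precisely the pointwise coend and end expressions for these Kan extensions, so the universal properties of (co)ends immediately yield the adjunctions $\IndH[(H)] \dashv \restrict{}{(H)}$ and $\restrict{}{(H)} \dashv \CoIndH[(H)]$ at the level of all functors into $\Ab$. What remains is to check that the extensions take values in the product-preserving subcategories, and then to compute the unit and counit on restricted Mackey functors.

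For product preservation, I would use that both $\cA^{\cO}$ and $\cA^{\cO}_{(H)}$ are preadditive categories whose categorical biproduct is the disjoint union of $G$-sets, and that $\iota$ preserves this biproduct. Under the standard identification of product-preserving functors into $\Ab$ with additive functors, the coend and end formulas become $\Ab$-enriched Kan extensions, which preserve additivity. Concretely, the isomorphism
\[
\cA^{\cO}(X \sqcup Y, G/K) \cong \cA^{\cO}(X, G/K) \oplus \cA^{\cO}(Y, G/K)
\]
together with the fact that $\hom_{\Ab}(-, \mM(G/K))$ turns finite direct sums in its first variable into products, shows that the end formula defining $\CoIndH[(H)]\mM$ is product-preserving in its argument; the coend formula for $\IndH[(H)]\mM$ is handled by the dual computation, using that the tensor product commutes with colimits. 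I expect this product-preservation verification to be the main technical point, although it reduces to routine enriched category theory once the biproduct structures are in place.

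Finally, the claims about unit and counit are formal consequences of $\iota$ being fully faithful. For any $G/J \in \cA^{\cO}_{(H)}$, full faithfulness gives $\cA^{\cO}(G/K, G/J) = \cA^{\cO}_{(H)}(G/K, G/J)$ for every $G/K \in \cA^{\cO}_{(H)}$, and then the co-Yoneda lemma collapses the coend defining $(\restrict{}{(H)}\IndH[(H)]\mM)(G/J)$ to $\mM(G/J)$, exhibiting the unit as an isomorphism natural in $G/J$. The dual argument using the Yoneda lemma on the end shows that the counit $\restrict{}{(H)}\CoIndH[(H)]\mM \to \mM$ is likewise an isomorphism, completing the plan.
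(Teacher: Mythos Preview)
Your proposal is correct and follows essentially the same approach as the paper: identify the formulas as pointwise left and right Kan extensions along the fully faithful inclusion \(\iota\), and deduce the unit/counit isomorphisms from full faithfulness. In fact you are more careful than the paper, which does not explicitly address product preservation; your verification via the biproduct structure of the Lindner categories and (co)end compatibility with direct sums fills a gap the paper leaves implicit.
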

\begin{proof}
The coends and ends (which are the left and right Kan extensions along \(\iota\)) 
exist since the category of abelian groups is complete and cocomplete.
These formulas  give the adjoints as the restriction functor is 
induced by a map of diagrams \(\cA^{\cO}_{(H)}\hookrightarrow \cA^{\cO}\).

The last statement is a consequence of the inclusion
\[
    \iota\colon\cA^{\cO}_{(H)}\to\cA^{\cO}
\]
being fully-faithful.
\end{proof}

\begin{remark}
    Induction and coinduction here are the ones for extending a Mackey functor defined on a family of subgroups to a Mackey functor on all subgroups of \(G\). Again, we also record the data of the conjugation action by arbitrary elements of \(G\).
\end{remark}

Restricting \(\AboveHMackey[{[H]^{\cO}}]^G\) down to the family of subgroups subconjugate to \(H\), we introduce our main category of interest.

\begin{definition}\label{def:[H]-Mackey}
    Let \([H]^{\cO}\mhyphen\Mackey^{G}\)\index{Hmackey@\([H]^\cO\mhyphen\Mackey^G\)} denote the full subcategory of \(\cO\)-Mackey functors on \(\mathcal F(H)\) spanned by those \(\mM\) such that if \(K\not\in [H]^{\cO}\), then \(\mM(G/K)=0\).
\end{definition}

An \([H]^{\cO}\)-Mackey functor \(\mM\) consists of a value \(\mM(G/K)\) for any subgroup \(K\) subconjugate to \(H\) which is \(0\) for all \(G/L\) where \(L \not \in [H]^{\cO}\). Moreover, it has restriction, additive transfers and conjugation maps encoded by \(\cA_{(H)}^{\cO}\)\index{A@\(\cA^{\cO}_{(H)}\)}.

\begin{theorem}\label{thm:res_ind_coind_exist}
    The  restriction functor and both adjoints restrict to give adjoint pairs of functors
    \[
    \xymatrix@C-0.3cm{
       \restrict{}{[H]^{\cO}} \colon {\AboveHMackey[{[H]^{\cO}}]}^G  
        \ar@<-0.5ex>[r]
        &
        \ar@<-0.5ex>[l]
        [H]^{\cO}\mhyphen\Mackey^G\ \colon \IndHO,
    }
    \]
    \[
    \xymatrix@C-0.3cm{
       \restrict{}{[H]^{\cO}} \colon \AboveHMackey[{[H]^{\cO}}]^{G}  
        \ar@<+0.5ex>[r]
        &
        \ar@<+0.5ex>[l]
        [H]^{\cO}\mhyphen\Mackey^G\ \colon \CoIndHO.
    }
    \]\index{Ind@\(\IndHO\)}\index{restrict@\(\restrict{}{[H]^{\cO}}\)}\index{coind@\(\CoIndHO\)}

\end{theorem}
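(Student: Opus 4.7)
The strategy is to show that the adjoint triple $(\IndH[(H)], \restrict{}{(H)}, \CoIndH[(H)])$ from Proposition \ref{prop:restrictiontriple} restricts to the full subcategories in question. Once each functor is shown to preserve the subcategory condition, the required adjunction isomorphisms are inherited from the ambient adjunctions via fully faithful inclusions.

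For the easy direction, I verify that $\restrict{}{(H)}$ sends $\AboveHMackey[{[H]^{\cO}}]^G$ into $[H]^{\cO}\mhyphen\Mackey^G$. Take $\mM$ in the former and let $K$ be subconjugate to $H$ with $K\notin[H]^{\cO}$; I must show $\mM(G/K)=0$, which amounts to showing that $K$ is not above $[H]^{\cO}$. If $K\subseteq gHg^{-1}$ and some $J\in[H]^{\cO}$ were contained in $K$, then $J\subseteq K\subseteq gHg^{-1}$ and the conjugate form of Corollary \ref{cor: Relative Family} would force $K\in[H]^{\cO}$, contradicting our choice. Hence $K\not\geq[H]^{\cO}$ and $\mM(G/K)=0$, as required.

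The main step is showing that $\IndH[(H)]$ sends $[H]^{\cO}\mhyphen\Mackey^G$ into $\AboveHMackey[{[H]^{\cO}}]^G$. Fix $\mM$ in the former and a subgroup $L$ not above $[H]^{\cO}$. In the coend formula for $\IndH[(H)]\mM(G/L)$, only summands indexed by $K\in[H]^{\cO}$ contribute since $\mM$ vanishes otherwise. A generating span $G/K\leftarrow G/J\to G/L$ (with $J$ subconjugate to $K$ and the right leg in $\cO$) factors in $\cA^{\cO}$ as a restriction $G/K\to G/J$ followed by a transfer $G/J\to G/L$; the restriction lies in $\cA^{\cO}_{(H)}$ because $J$ is still subconjugate to $H$. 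Applying the coend identification replaces the generator by the transfer applied to $\res_J^K(m)$ for $m\in\mM(G/K)$. If $J\in[H]^{\cO}$, then the transfer $J\to L$ places $L$ above $[H]^{\cO}$, contradicting the hypothesis on $L$. Otherwise $J\notin[H]^{\cO}$, so $\mM(G/J)=0$ and $\res_J^K(m)=0$. Either way the contribution is zero, so $\IndH[(H)]\mM(G/L)=0$. The dual argument, using the end formula and the fact that $\mM(G/J)=0$ forces any matching family to vanish at $G/J$, handles $\CoIndH[(H)]$.

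With these closure properties established, the adjunctions on the subcategories follow formally: for $\mM\in[H]^{\cO}\mhyphen\Mackey^G$ and $\mN\in\AboveHMackey[{[H]^{\cO}}]^G$, the natural isomorphism $\Hom(\IndH[(H)]\mM,\mN)\cong\Hom(\mM,\restrict{}{(H)}\mN)$ of Proposition \ref{prop:restrictiontriple} already lives inside the subcategories, and likewise for the $\restrict{}{(H)}\dashv\CoIndH[(H)]$ adjunction. The main obstacle is the coend/end computation: everything depends on being able to factor every span in $\cA^{\cO}$ out of a subgroup of $[H]^{\cO}$ as a restriction (staying in $\cA^{\cO}_{(H)}$) followed by a transfer, so that the coend relation can dispatch each generator using the vanishing hypotheses on $\mM$.
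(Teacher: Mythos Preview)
Your proof is correct and follows essentially the same route as the paper's: factor each span $G/K\leftarrow G/J\to G/L$ as a restriction in $\cA^{\cO}_{(H)}$ followed by a transfer, use the coend relation to replace $m\otimes\alpha$ by an element of $\mM(G/J)\otimes\cA^{\cO}(G/J,G/L)$, and observe that $\mM(G/J)=0$. The only cosmetic difference is that the paper argues directly that $J\notin[H]^{\cO}$ (since $J$ is subconjugate to $L$ and hence not above $[H]^{\cO}$), while you make the case split on $J\in[H]^{\cO}$ versus $J\notin[H]^{\cO}$ explicit; you also spell out the restriction direction using Corollary~\ref{cor: Relative Family}, whereas the paper treats it as immediate from the definitions.
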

\begin{proof}
    The definition of the category \([H]^{\cO}\mhyphen\Mackey^G\) shows that the image of the restriction functor from \(\AboveHMackey[{[H]^{\cO}}]^G\) lands in it. For the remaining results, since these are full subcategories, we need only to show that the left and right adjoints land in the full subcategory \(\AboveHMackey[{[H]^{\cO}}]^G\). 
    
    For the left adjoint, consider some 
    \(\mM \in [H]^{\cO}\mhyphen\Mackey^G\). By construction, we have
    \[
        \IndHO\mM (G/L) = \int^{G/K \in \cA^{\cO}_{(H)}} \cA^{\cO} (G/K, G/L) \otimes M(G/K).
    \]
    We need to check that if \(L\not\geq [H]^\cO\), then this is zero.

    Consider any span \(\alpha\) from \(G/K\) to \(G/L\) of the form
    \[
        G/K\xleftarrow{f_R} G/J\xrightarrow{f_T} G/L.
    \] 
    Since \(J\) is subconjugate to \(L\), if \(L\) is not above \([H]^\cO\), then \(J\) must also not be above \([H]^\cO\). Moreover, since \(G/J\to G/K\) and \(G/K\in \cA^{\cO}_{(H)}\), we deduce that \(J\) is subconjugate to \(H\). This means that the map \(G/J\to G/K\) is in \(\cA^{\cO}_{(H)}\), and we can factor \(\alpha\) as
    \[
        \big(G/J\xleftarrow{=} G/J\xrightarrow{f_T} G/L\big)\circ \big(G/K\xleftarrow{f_R} G/J\xrightarrow{=} G/J\big).
    \]
    In the coend the term corresponding to 
    \[
    \alpha \otimes m \in \cA^{\cO} (G/K, G/L) \otimes M(G/K)
    \]
    is identified with 
    \[
        f_T\otimes f_{R}^{\ast}(m)\in \cA^{\cO} (G/J, G/L) \otimes \mM(G/J).
    \]
    Since \(J\) is not in \([H]^\cO\) but is in the family of subgroups subconjugate to \(H\), \(\mM(G/J)=0\) by assumption.
    This gives the claim. A similar argument holds for the right adjoint.  
\end{proof}

\begin{corollary}\label{cor:indH is fully faithful}   
For \(\mM\) an \([H]^\cO\)-Mackey functor, the unit 
    \[ 
        \eta_{\mM}\colon \mM \to \restrict{}{[H]^\cO} \IndHO\mM 
    \]  and counit 
    \[
        \epsilon_{\mM}\colon \CoIndHO \restrict{}{[H]^\cO} \mM \to \mM 
    \]
    are isomorphisms. \index{Ind@\(\IndHO\)}\index{restrict@\(\restrict{}{[H]^{\cO}}\)}\index{coind@\(\CoIndHO\)}
\end{corollary}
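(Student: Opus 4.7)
The plan is to deduce this corollary essentially immediately from Proposition~\ref{prop:restrictiontriple}. The key observation is that both adjunctions constructed in Theorem~\ref{thm:res_ind_coind_exist} are inherited from the adjunctions of Proposition~\ref{prop:restrictiontriple} by restriction along the fully faithful inclusions $[H]^{\cO}\mhyphen\Mackey^G \hookrightarrow \OMackey^G_{\mathcal F(H)}$ and $\AboveHMackey[{[H]^{\cO}}]^G \hookrightarrow \OMackey^G$. Indeed, Theorem~\ref{thm:res_ind_coind_exist} was precisely the verification that the same Kan extension formulas (and the precomposition defining restriction) carry objects in the smaller source subcategory to objects in the smaller target subcategory. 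Consequently, the functors $\IndHO$, $\CoIndHO$, and $\restrict{}{[H]^{\cO}}$ are given by the same coends, ends, and precomposition as $\IndH[(H)]$, $\CoIndH[(H)]$, and $\restrict{}{(H)}$, respectively.

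For the unit, let $\mM \in [H]^{\cO}\mhyphen\Mackey^G$, viewed through the inclusion as an object of $\OMackey^G_{\mathcal F(H)}$. Because the underlying functors agree, $\restrict{}{[H]^{\cO}} \IndHO \mM$ equals $\restrict{}{(H)} \IndH[(H)] \mM$, and the unit $\eta_{\mM}$ is identified with the unit of the adjunction $\IndH[(H)] \dashv \restrict{}{(H)}$ at $\mM$. The latter is an isomorphism by the last assertion of Proposition~\ref{prop:restrictiontriple}, which uses that the inclusion $\iota\colon \cA^{\cO}_{(H)}\hookrightarrow\cA^{\cO}$ is fully faithful. Hence $\eta_{\mM}$ is an isomorphism.

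The argument for the counit is entirely analogous, now using the counit of the adjunction $\restrict{}{(H)} \dashv \CoIndH[(H)]$ applied to $\mM$ regarded in $\OMackey^G_{\mathcal F(H)}$. I would interpret the stated map as the counit of the adjunction $\restrict{}{[H]^{\cO}} \dashv \CoIndHO$, which for $\mM \in [H]^{\cO}\mhyphen\Mackey^G$ equals the counit $\restrict{}{(H)} \CoIndH[(H)] \mM \to \mM$ from Proposition~\ref{prop:restrictiontriple}, again an isomorphism by full faithfulness of $\iota$.

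There is no substantive obstacle here; the genuine content was already carried out in Theorem~\ref{thm:res_ind_coind_exist}, where one verified that the Kan extensions respect the subcategories in question. The corollary is then a formal consequence of the general fact that restricting a unit/counit along fully faithful inclusions on both sides of an adjunction preserves its being an isomorphism.
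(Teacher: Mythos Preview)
Your proposal is correct and matches the paper's approach: the paper states this as an immediate corollary of Proposition~\ref{prop:restrictiontriple} and Theorem~\ref{thm:res_ind_coind_exist} without giving a separate proof, and your argument spells out precisely why --- the restricted adjunctions inherit their unit and counit from the Kan extension adjunctions along the fully faithful inclusion $\iota$. Your remark that the displayed counit should be read as $\restrict{}{[H]^{\cO}}\CoIndHO\mM\to\mM$ (the counit of $\restrict{}{[H]^{\cO}}\dashv\CoIndHO$) is the correct interpretation.
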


\begin{notation}
    Since \(\IndHO\) is fully-faithful, we will identify \([H]^{\cO}\mhyphen\Mackey^G\) with the image under induction in \(\AboveHMackey[{[H]^{\cO}}]^G\).
\end{notation}

\subsection{Analyzing the left adjoint}

\subsubsection{Values and conjugations}

\begin{proposition}\label{prop:Ind}
For any \([H]^{\cO}\)-Mackey functor \(\mM\), \(\IndHO\mM \in \AboveHMackey[{[H]^{\cO}}]^G\)\index{above@\(\AboveHMackey[{[H]^{\cO}}]^G\)} satisfies the formula 
    \[
        \IndHO\mM(G/L)=\begin{cases}
           0 & L\not \geq [H]^{\cO} \\
           \displaystyle\bigoplus_{L_g^{H}\in T_{L\geq [H]^{\cO}}/L} \mM(G/(L_{g}^{H}))_{W_L (L_{g}^{H})} & L\geq [H]^{\cO}.
            \end{cases}
    \]\index{transfer@\(T_{L\geq [H]^{\cO}}\)}\index{Ind@\(\IndHO\)}
\end{proposition}

In our analysis that follows, we will use the coend formula for the left adjoint: \(\IndHO\mM(G/L)\) is the coequalizer of the diagram
\[
\xymatrix@C-0.3cm{
\displaystyle\bigoplus_{K,J\in\mathcal F(H)} \mM(G/J)\otimes\cA^{\cO}_{(H)}(G/J,G/K)\otimes\cA^{\cO}(G/K,G/L)
\ar@<+0.5ex>[r]
\ar@<-0.5ex>[r]
& 
\displaystyle\bigoplus_{K\in\mathcal F(H)} \mM(G/K)\otimes \cA^{\cO}(G/K,G/L)
}
\]

\begin{proof} 
    The Weyl group \(W_L (L_{g}^{H})\) is a subgroup of \(W_G (L_{g}^{H})\) thus the orbits are taken with respect to the usual action. We may view this as an action of the normaliser \(N_L (L_{g}^{H})\), rather than the Weyl group, via the usual quotient map.

    The coend formula for \(\IndHO\mM(G/L)\) simplifies in two ways. Since \(\mM\) is assumed to be in the category \([H]^{\cO}\mhyphen\Mackey^G\), if \(K\) is not in \([H]^{\cO}\), then \(\mM(G/K)=0\). This immediately allows us to throw out any summands indexed by subgroups other than those in \([H]^{\cO}\). 
    
    Now consider \(K\in [H]^{\cO}\), and assume we have a morphism in \(\cA^{\cO}(G/K,G/L)\) of the form
    \[
        G/K\xleftarrow{f_R} G/J\xrightarrow{f_T}G/L.
    \]
    If \(J\not\in [H]^{\cO}\), then this is a composite
    \[
        \big(G/\big(H_{\cO}(J)\cap L\big)\xleftarrow{=} G/\big(H_{\cO}(J)\cap L\big) \xrightarrow{} G/L\big)\circ \big(G/K\xleftarrow{f_R} G/J\xrightarrow{} G/\big(H_{\cO}(J)\cap L\big)\big).
    \]
    The span 
    \[
    G/K\xleftarrow{f_R} G/J\xrightarrow{} G/\big(H_{\cO}(J)\cap L\big)
    \]
    is in \(\cA^{\cO}_{(H)}\). If \(J\not\in [H]^{\cO}\), then both \(H_{\cO}(J)\) and \(H_{\cO}(J)\cap L\) are not in \([H]^{\cO}\). This means that any element of the form
    \[
        m\otimes \big(G/K\xleftarrow{f_R} G/J\xrightarrow{f_T}G/L\big)
    \]
    with \(J\not\in[H]^{\cO}\) is also set equal to zero.

    Finally, given a span 
    \[
        G/K\xleftarrow{f_R} G/J\xrightarrow{f_T}G/L
    \]
    with both \(K\) and \(J\) in \([H]^{\cO}\). By Lemma~\ref{lem:NON_Ab_transfer_up_to_L}, we know this can be factored as a composite of
    \[
        (G/L_g^{H} \xleftarrow{=} G/L_g^H\to G/L)\circ(G/K\xleftarrow{f_R} G/J\xrightarrow{\tilde{f}_T} G/L_g^{H})
    \]
    where \(g\) is such that \(J\subseteq gHg^{-1}\). The span
    \[
        G/K\xleftarrow{f_R} G/J\xrightarrow{\tilde{f}_T} G/L_g^{H}
    \]
    is in \(\cA^{\cO}_{(H)}\), so in the coend formula, any element 
    \[
        m\otimes\alpha\in \mM(G/K)\otimes\cA^{\cO}(G/K,G/L)
    \]
    is canonically identified with
    \[
        \big(f_{T\ast} f_{R}^{\ast}(m)\big)\otimes \big(G/L_g^{H} \xleftarrow{=} G/L_g^H\to G/L\big).
    \]
    This means we can rewrite the coend as a quotient of 
    \[
        \bigoplus_{L_g^H\in T_{L\geq [H]^{\cO}}} \mM(G/L_g^H).
    \]
    An element \(m\) in the summand corresponding to \(L_g^H\) represents the simple tensor
    \[
        m\otimes (G/L_g^H\xleftarrow{=} G/L_g^H \to G/L),
    \]
    where the right-hand map is the canonical quotient. 

    The ``conjugation'' action in the Mackey functor arises as restriction along the isomorphism
    \[
        a\colon G/aLa^{-1}\to G/L
    \]
    given by 
    \[
        gaLa^{-1}\mapsto gaL.
    \]
    When we compose the span \(G/L_g^H\xleftarrow{=} G/L_g^H \to G/L\) with this, then we get the span
    \[
        G/L_g^{H} \xleftarrow{a} G/aL_g^Ha^{-1} \to G/aLa^{-1},
    \]
    which factors as
    \[
        (G/aL_g^Ha^{-1}\xleftarrow{=} G/aL_g^Ha^{-1}\to G/aLa^{-1})\circ (G/L_g^{h}\xleftarrow{a} G/aL_g^Ha^{-1}\xrightarrow{=}G/aL_g^Ha^{-1}).
    \]
    The part involving just isomorphisms is the usual conjugation, viewed as being in \(\cA^{\cO}_{(H)}\).

    Now, if \(a\in L\), then the induced map \(a\colon G/L\to G/L\) is the identity. If \(aL_g^H a^{-1}\neq L_g^H\), then the coend identifies summands, via \(c_a^{\ast}\), further simplifying the sum to 
    \[
        \bigoplus_{L_g^H\in T_{L\geq [H]^{\cO}}/L} \mM(G/L_g^H).
    \]
    Finally, if \(a\in N_{L}(L_g^H)\), then the identity and \(c_{a}^\ast\) are coequalized, giving as a final answer
    \[
        \bigoplus_{L_g^H\in T_{L\geq [H]^{\cO}}/L} \mM(G/L_g^H)_{N_{L}(L_g^H)}.
    \]
    The last term can further be identified with the quotient by the Weyl action.

    The summands indexed by \(L_{g}^{H}\) and 
    \[
        \ell(L_{g}^{H})\ell^{-1}=\ell gHg^{-1}\ell^{-1}\cap L,
    \] 
    are identified since conjugation by \(\ell\in L\) gives an isomorphism in \(G\)-sets over \(G/L\): 
    \[
        G/(L_{g}^{H}) \cong G/\ell(L_{g}^{H})\ell^{-1}.
    \]

    Next, each of the summands is then quotiented by the automorphisms in \(G\)-sets over \(G/L\) of \(G/(L_{g}^{H})\), which is 
    \[
        \big(N_G(gHg^{-1})\cap L\big)/ (L_{g}^{H}).
    \] 
    Finally, this group can be identified with the corresponding Weyl group by Proposition~\ref{prop: Normalizer in L of LgH}:
    \[W_L (L_{g}^{H})=N_L(L_{g}^{H})/(L_{g}^{H})=\big(N_G(gHg^{-1})\cap L\big)/ (L_{g}^{H}). \qedhere
    \]
\end{proof}

One way to interpret the value of \(\IndHO\mM\) at \(G/L\) for \(L\geq [H]^{\cO}\) is to note that it is the sum of formal transfers from the maximal subgroups of \([H]^{\cO}\) which transfer up to \(L\). The relations then are simply the usual equivariance relations for the transfer and how it connects to conjugations.

In our proof of Proposition~\ref{prop:Ind}, we used a description of the conjugation action. We record it here as well.

\begin{notation}
    For \(m\in \mM(G/L_g^H)\), let \(\indtr_{L_g^H}^{L}(m)\)\index{transfer@\(\indtr_{L_g^H}^{L}(m)\)} denote the image of the simple tensor
    \[
        m\otimes (G/L_g^H\xleftarrow{=} G/L_g^H \to G/L).
    \]
\end{notation}

\begin{proposition}
    For a \([H]^{\cO}\)-Mackey functor \(\mM\), the conjugation by \(a\) map
    \[
        c_a^{\ast}\colon \IndHO\mM(G/L)\to \IndHO\mM(G/aLa^{-1})
    \]
    is determined by the formula
    \[
        c_a^{\ast} \indtr_{L_g^H}^L(m) = \indtr_{aL_g^Ha^{-1}}^{aLa^{-1}} c_a^{\ast}(m),
    \]
    where the right most \(c_a^{\ast}(m)\) is evaluated in the \([H]^{\cO}\)-Mackey functor \(\mM\).
\end{proposition}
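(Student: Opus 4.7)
The plan is to extract the formula from the coend description of \(\IndHO\mM(G/L)\) used in the proof of Proposition~\ref{prop:Ind}, by explicitly computing the pullback that represents precomposition with the conjugation isomorphism \(a\colon G/aLa^{-1}\to G/L\).

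First, I would recall that by definition \(\indtr_{L_g^H}^L(m)\) is the class in the coend of the simple tensor
\[
    m\otimes\bigl(G/L_g^H\xleftarrow{=} G/L_g^H\to G/L\bigr),
\]
where the right-hand map is the canonical projection. The conjugation map \(c_a^{\ast}\) on \(\IndHO\mM\) is induced by post-composition of spans in the \(G/L\)-slot with the isomorphism \(a\colon G/aLa^{-1}\to G/L\) (equivalently, restriction along \(a\) in \(\cA^{\cO}\)). Taking the pullback of the projection \(G/L_g^H\to G/L\) along \(a\) gives the span
\[
    G/L_g^H\xleftarrow{a} G/aL_g^Ha^{-1}\to G/aLa^{-1},
\]
since conjugation by \(a\) carries \(L_g^H=L\cap gHg^{-1}\) to \(aL_g^Ha^{-1}=aLa^{-1}\cap (ag)H(ag)^{-1}\); in particular \(aL_g^Ha^{-1}\) lies in \(T_{aLa^{-1}\geq [H]^{\cO}}\) by \(G\)-equivariance of the inseparability class.

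Next, following exactly the factorization used in the proof of Proposition~\ref{prop:Ind}, I would split this span as the composite
\[
    \bigl(G/aL_g^Ha^{-1}\xleftarrow{=} G/aL_g^Ha^{-1}\to G/aLa^{-1}\bigr)\circ\bigl(G/L_g^H\xleftarrow{a} G/aL_g^Ha^{-1}\xrightarrow{=} G/aL_g^Ha^{-1}\bigr).
\]
The right factor lies in \(\cA^{\cO}_{(H)}\) and so, as a morphism in the diagram category, it acts by pullback across the coend, moving from the summand indexed by \(L_g^H\) to the one indexed by \(aL_g^Ha^{-1}\); on the Mackey functor side this is precisely the conjugation \(c_a^{\ast}\colon \mM(G/L_g^H)\to \mM(G/aL_g^Ha^{-1})\). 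The left factor is the canonical representative of \(\indtr_{aL_g^Ha^{-1}}^{aLa^{-1}}\). Putting these together in the coend gives exactly
\[
    c_a^{\ast}\indtr_{L_g^H}^L(m)=\indtr_{aL_g^Ha^{-1}}^{aLa^{-1}}c_a^{\ast}(m),
\]
as required.

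The calculation is essentially bookkeeping once Proposition~\ref{prop:Ind} is in hand, so there is no significant obstacle; the only subtle point is verifying that the intermediate object \(aL_g^Ha^{-1}\) genuinely belongs to \(T_{aLa^{-1}\geq [H]^{\cO}}\), which follows immediately from conjugation invariance of \([H]^{\cO}\) together with Proposition~\ref{prop: TLgeqH maps to GmodNH} ensuring uniqueness of the representative.
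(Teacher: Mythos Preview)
Your proposal is correct and follows exactly the same route as the paper: the paper does not give a separate proof of this proposition but instead remarks that the formula was already derived inside the proof of Proposition~\ref{prop:Ind}, where the conjugation span \(G/L_g^H\xleftarrow{a} G/aL_g^Ha^{-1}\to G/aLa^{-1}\) is factored just as you do. Your write-up simply makes that embedded argument explicit.
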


\subsubsection{Restrictions} Suppose \(K\subseteq L\) and both \(K,L \geq [H]^{\cO}\). 

\begin{proposition}\label{prop:restrictionequation}
    If \(K\geq [H]^{\cO}\) and \(K\subseteq L\), then the restriction map
    \[
        \res_{K}^{L}\colon \IndHO\mM(G/L)\to \IndHO\mM(G/K)
    \]
    is given by the formula
    \[
        \res_{K}^{L} \indtr_{L_g^H}^L(m)=\sum_{\ell\in L_g^H\backslash L/K} 
        \indtr_{K_{\ell^{-1}g}^H}^{K} c_{\ell}^{\ast} \res_{L_g^H\cap \ell K\ell^{-1}}^{L_g^H}(m).
    \]
\end{proposition}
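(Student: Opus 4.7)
My plan is to compute the restriction by pushing the calculation into the Lindner category $\cA^{\cO}$, where it becomes a composition of spans, and then to reorganize the resulting span as a sum of transfer--conjugation--restriction composites in the spirit of the classical Mackey double coset formula. By the coend description used in the proof of Proposition~\ref{prop:Ind}, the element $\indtr_{L_g^H}^L(m)$ is represented by the simple tensor
\[
m \otimes \big(G/L_g^H \xleftarrow{=} G/L_g^H \xrightarrow{\pi} G/L\big).
\]
Since $\res_K^L$ is induced by post-composition with the span $G/L \xleftarrow{\pi} G/K \xrightarrow{=} G/K$ in $\cA^{\cO}$, the image is represented by $m \otimes \sigma$, where $\sigma$ is the composite span whose middle object is the pullback $P := G/L_g^H \times_{G/L} G/K$.

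The first main step is to decompose $P$ via a double coset analysis. Every $G$-orbit of $P$ contains a representative of the form $(\ell^{-1}L_g^H, eK)$ with $\ell \in L$, and two such representatives determine the same orbit precisely when the corresponding $\ell$'s agree in $L_g^H\backslash L/K$. Using $K \subseteq L$, the stabilizer of $(\ell^{-1}L_g^H, eK)$ simplifies to $K \cap \ell^{-1}gHg^{-1}\ell = K_{\ell^{-1}g}^H$, giving
\[
P \;\cong\; \coprod_{\ell \in L_g^H\backslash L/K} G/K_{\ell^{-1}g}^H.
\]
Thus $\sigma$ splits as a coproduct of spans $G/L_g^H \xleftarrow{\phi_\ell} G/K_{\ell^{-1}g}^H \xrightarrow{\psi_\ell} G/K$, with $\psi_\ell(a K_{\ell^{-1}g}^H) = aK$ and $\phi_\ell(a K_{\ell^{-1}g}^H) = a\ell^{-1}L_g^H$.

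The second main step is to factor each summand span into the three building blocks appearing on the right-hand side. The right leg $\psi_\ell$ is the canonical quotient along $K_{\ell^{-1}g}^H \subseteq K$, so it contributes $\indtr_{K_{\ell^{-1}g}^H}^{K}$. For the left leg I would factor $\phi_\ell$ as the conjugation isomorphism $G/K_{\ell^{-1}g}^H \xrightarrow{\cong} G/\big(\ell K_{\ell^{-1}g}^H\ell^{-1}\big)$ followed by the quotient to $G/L_g^H$. The arithmetic identity $\ell K_{\ell^{-1}g}^H\ell^{-1} = L_g^H \cap \ell K\ell^{-1}$, which uses $\ell \in L$ and $K\subseteq L$, identifies the intermediate orbit as $G/(L_g^H \cap \ell K\ell^{-1})$. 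The quotient step then reads as $\res_{L_g^H\cap \ell K\ell^{-1}}^{L_g^H}$ and the isomorphism step as $c_\ell^\ast$, using the conjugation convention fixed at the end of the proof of Proposition~\ref{prop:Ind}.

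Assembling these pieces and using additivity of the coend across the coproduct decomposition of $P$ yields the claimed formula as a sum indexed by $L_g^H\backslash L/K$. The main obstacle I expect is not the span composition itself, which is mechanical, but the bookkeeping in the second step: getting the direction of $c_\ell^\ast$ and the identity $\ell K_{\ell^{-1}g}^H\ell^{-1} = L_g^H\cap \ell K\ell^{-1}$ consistent with both the paper's conjugation convention and the particular choice of orbit representatives made in the double coset decomposition.
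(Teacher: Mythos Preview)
Your proposal is correct and follows essentially the same approach as the paper: both compute the composite span via the pullback $G/L_g^H\times_{G/L}G/K$ and then apply the double coset decomposition. The paper is terser, writing the pullback as $G\times_L(L/L_g^H\times L/K)$ and invoking ``the ordinary double coset formula'' without spelling out the orbit representatives, stabilizer computation, and span factorization that you carry out explicitly.
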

\begin{proof}
    By definition, the restriction to \(K\) of the element \(\indtr_{L_g^H}^{L}(m)\) is represented in the coend by
    \[
        m\otimes \big(G/L_g^H\leftarrow G/L_g^H\timesover{G/L} G/K\to G/K\big).
    \]

    The result follows from the identification
    \[
        G/L_g^H\timesover{G/L} G/K\cong G\timesover{L} \big(L/L_g^H\times L/K\big)
    \]
    and applying the ordinary double coset formula.
\end{proof}

\subsubsection{Transfers}

\begin{proposition}
    If \(K\geq [H]^{\cO}\) and \(K \to L\), then the transfer map
    \[
        \tr_{K}^{L}\colon \IndHO\mM(G/K)\to \IndHO\mM(G/L)
    \]
    is given by the formula
    \[
        \tr_{K}^{L} \indtr_{K_g^H}^{K}(m)= \indtr_{L_g^H}^K \tr_{K_g^H}^{L_g^H}(m).
    \]
\end{proposition}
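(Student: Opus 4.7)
The plan is to unwind the definition of $\IndHO\mM$ via its coend presentation and to compute the effect of $\tr_K^L$ by span composition, in direct parallel with the proof of Proposition~\ref{prop:restrictionequation}. By construction, $\indtr_{K_g^H}^{K}(m)$ is represented in the coend by the simple tensor
\[
m \otimes \big[G/K_g^H \xleftarrow{=} G/K_g^H \twoheadrightarrow G/K\big],
\]
where the right-hand map is the canonical quotient. Postcomposing this span with $[G/K \xleftarrow{=} G/K \twoheadrightarrow G/L]$ and using the pullback identification $G/K_g^H \times_{G/K} G/K \cong G/K_g^H$, we see that $\tr_K^L\indtr_{K_g^H}^{K}(m)$ is represented by
\[
m\otimes \big[G/K_g^H \xleftarrow{=} G/K_g^H \twoheadrightarrow G/L\big].
\]

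Next, I factor the right-hand quotient $G/K_g^H\to G/L$ through $G/L_g^H$. Since $K_g^H\in [H]^{\cO}$ by the definition of $\indtr_{K_g^H}^{K}$, Proposition~\ref{prop:Claim2} guarantees that $L_g^H\in T_{L\geq [H]^{\cO}}$ and that $K_g^H\subseteq L_g^H$, providing the canonical factorization $G/K_g^H \twoheadrightarrow G/L_g^H \twoheadrightarrow G/L$. This exhibits the above span as the composition in $\cA^{\cO}$
\[
\big[G/L_g^H \xleftarrow{=} G/L_g^H \twoheadrightarrow G/L\big] \circ \big[G/K_g^H \xleftarrow{=} G/K_g^H \twoheadrightarrow G/L_g^H\big].
\]
To invoke the defining coend relation, I must check that the second span lies in $\cA^{\cO}_{(H)}$. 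Both $K_g^H$ and $L_g^H$ are contained in $gHg^{-1}$ and hence are subconjugate to $H$; moreover, the restriction axiom applied to the transfer $K \to L$ along $L_g^H \subseteq L$, together with the identity $K\cap L_g^H=K_g^H$ (valid since $K\subseteq L$), produces the transfer $K_g^H\to L_g^H$ in $\cO$.

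Finally, the coend identification $m \otimes (\alpha \circ \beta) = \beta_{*}(m) \otimes \alpha$ for $\beta \in \cA^{\cO}_{(H)}$ and $\alpha \in \cA^{\cO}$ lets us slide the $\cA^{\cO}_{(H)}$-part across. Applied with $\beta = \tr_{K_g^H}^{L_g^H}$ and $\alpha = [G/L_g^H \xleftarrow{=} G/L_g^H \twoheadrightarrow G/L]$, we obtain
\[
m\otimes \big[G/K_g^H \xleftarrow{=} G/K_g^H \twoheadrightarrow G/L\big]=\tr_{K_g^H}^{L_g^H}(m)\otimes \big[G/L_g^H \xleftarrow{=} G/L_g^H \twoheadrightarrow G/L\big],
\]
and the right-hand side is by definition $\indtr_{L_g^H}^{L}\!\bigl(\tr_{K_g^H}^{L_g^H}(m)\bigr)$, which is the desired formula. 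The only step that is not pure bookkeeping is verifying that $K_g^H \to L_g^H$ lies in $\cO$, which we see is a clean application of the restriction axiom; I expect no further obstacle.
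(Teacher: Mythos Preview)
Your proof is correct and follows the same approach as the paper: represent $\tr_K^L\indtr_{K_g^H}^{K}(m)$ by the simple tensor $m\otimes \big(G/K_g^H\xleftarrow{=} G/K_g^H\to G/L\big)$, then factor through $G/L_g^H$ and use the coend relation. The paper's own proof records only the first of these steps and leaves the rest implicit, so your write-up is in fact more complete; in particular your verification that $K_g^H\to L_g^H$ lies in $\cO$ via the restriction axiom (using $K\cap L_g^H=K_g^H$) is exactly the point that makes the factorization legitimate.
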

\begin{proof}
    The transfer to \(L\) of the element \(\indtr_{K_g^H}^{K}(m)\) is represented by 
    \[
        m\otimes \big(G/K_g^H\xleftarrow{=} G/K_g^H\to G/L\big). \qedhere
    \]
\end{proof}

This gives us a very important structural result about induction.

\begin{corollary}\label{cor:transfers_generate_value_Ind}
    If \(\mM\) is in \([H]^{\cO}\mhyphen\Mackey^G\), and \(L\) is above \([H]^{\cO}\), then   
    \[
        \IndHO \mM(G/L)
    \]
    is generated by transfers from 
    \[
        \mM(G/K)=\restrict{}{[H]^{\cO}}\IndHO \mM(G/K)
    \]
    for \(K\in [H]^{\cO}\).
\end{corollary}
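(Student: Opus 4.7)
The plan is to combine Proposition~\ref{prop:Ind}, which provides an explicit generating set for $\IndHO\mM(G/L)$, with the transfer formula from the preceding proposition. By Proposition~\ref{prop:Ind}, when $L\geq [H]^{\cO}$ we have
\[
\IndHO\mM(G/L)=\bigoplus_{L_g^{H}\in T_{L\geq [H]^{\cO}}/L}\mM(G/L_g^H)_{W_L(L_g^H)},
\]
so $\IndHO\mM(G/L)$ is generated by the elements $\indtr_{L_g^H}^L(m)$ with $L_g^H\in [H]^{\cO}$ and $m\in \mM(G/L_g^H)$.

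The key step is to show that each such generator is itself a transfer from $G/L_g^H$ to $G/L$. Specializing the transfer formula to $K=L_g^H$, and observing that $K_g^H=L_g^H\cap gHg^{-1}=L_g^H=K$ because $L_g^H\subseteq gHg^{-1}$, the formula collapses to
\[
\tr_{L_g^H}^L\bigl(\indtr_{L_g^H}^{L_g^H}(m)\bigr)=\indtr_{L_g^H}^L\bigl(\tr_{L_g^H}^{L_g^H}(m)\bigr).
\]
Both $\tr_{L_g^H}^{L_g^H}$ and $\indtr_{L_g^H}^{L_g^H}$ act as identities (the latter under the unit isomorphism $\mM(G/L_g^H)\cong \restrict{}{[H]^{\cO}}\IndHO\mM(G/L_g^H)$ of Corollary~\ref{cor:indH is fully faithful}), so I conclude that $\indtr_{L_g^H}^L(m)=\tr_{L_g^H}^L(m)$ in $\IndHO\mM(G/L)$. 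Since $L_g^H\in [H]^{\cO}$, this exhibits every generator as a transfer from $\mM(G/K)$ for some $K\in [H]^{\cO}$, which is exactly the statement of the corollary.

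No serious obstacle is anticipated: the corollary is a direct consequence of the structural formulas already in hand. The only point requiring mild care is the identification $\indtr_K^K(m)=m$ for $K\in [H]^{\cO}$, which follows from noting that the coend representative $m\otimes(G/K\xleftarrow{=}G/K\xrightarrow{=}G/K)$ maps to $m$ under the unit isomorphism of the restriction/induction adjunction, so that the specialization above indeed produces an honest equality of elements in $\IndHO\mM(G/L)$.
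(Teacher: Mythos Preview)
Your proof is correct and follows essentially the same approach as the paper: the corollary is stated there without an explicit proof, as an immediate consequence of the transfer formula and Proposition~\ref{prop:Ind}. Your specialization \(K=L_g^H\) (using \(K_g^H=L_g^H\) since \(L_g^H\subseteq gHg^{-1}\), and \(L_g^H\to L\in\cO\) by restricting \(gHg^{-1}\to G\) along \(L\)) is exactly the intended argument.
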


\begin{corollary}
 If \(G\) is an abelian group then 
    for any \([H]^{\cO}\)-Mackey functor \(\mM\), we have the formula 
   \[ \IndHO\mM(G/L)= \mM(G/H\cap L)_{L/H\cap L}. 
   \] 
 Notice that this formula gives value \(0\) for all \(L\not\geq [H]^{\cO}\).
\end{corollary}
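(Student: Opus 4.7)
The plan is to specialise the general formula of Proposition~\ref{prop:Ind} to the abelian setting and check that the unified expression \(\mM(G/(H\cap L))_{L/(H\cap L)}\) also gives the correct value (zero) when \(L\not\geq [H]^{\cO}\). The abelian assumption trivialises the conjugation action on \(\Sub(G)\), which is what collapses the direct sum of Weyl coinvariants to a single term.

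First I would observe that when \(G\) is abelian we have \(gHg^{-1}=H\) for every \(g\in G\), so by Notation~\ref{not:LgH} the intersection \(L_g^H=L\cap H\) is independent of \(g\). Consequently the set \(T_{L\geq [H]^{\cO}}\) of Notation~\ref{not:weakly_terminal_set} is either empty or the singleton \(\{L\cap H\}\). Invoking Corollary~\ref{cor:weaklyTerminal}, the case \(T_{L\geq [H]^{\cO}}=\{L\cap H\}\) occurs exactly when \(L\geq [H]^{\cO}\), and in that case \(L\cap H\) lies in \([H]^{\cO}\). Second, since \(L\) is abelian, \(N_L(L\cap H)=L\) and hence \(W_L(L\cap H)=L/(L\cap H)\). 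Plugging this into Proposition~\ref{prop:Ind} yields the claimed formula whenever \(L\geq [H]^{\cO}\), since the indexing set \(T_{L\geq [H]^{\cO}}/L\) is then a single element.

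Third, for \(L\not\geq [H]^{\cO}\), I need to confirm the right-hand side vanishes, in agreement with Proposition~\ref{prop:Ind}. The group \(H\cap L\) is a subgroup of \(L\), so if it lay in \([H]^{\cO}\) then \(L\) would be above \([H]^{\cO}\) by Definition~\ref{def:aboveH}, contrary to assumption. Therefore \(H\cap L\notin [H]^{\cO}\), and because \(\mM\) is an \([H]^{\cO}\)-Mackey functor in the sense of Definition~\ref{def:[H]-Mackey}, its value on \(G/(H\cap L)\) is \(0\). This forces \(\mM(G/(H\cap L))_{L/(H\cap L)}=0\), matching the vanishing in the general formula.

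There is no real obstacle here; the only thing to watch is that the single formula \(\mM(G/(H\cap L))_{L/(H\cap L)}\) absorbs both branches of the piecewise formula from Proposition~\ref{prop:Ind}, which is ensured by the two observations above: the collapse of \(T_{L\geq [H]^{\cO}}\) to at most one element under commutativity, and the automatic vanishing of \(\mM\) at subgroups outside \([H]^{\cO}\).
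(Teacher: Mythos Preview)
Your argument is correct and is precisely the intended one: the paper states this corollary without proof because it is the immediate specialisation of Proposition~\ref{prop:Ind} to the abelian case, and you have supplied exactly those details, including the verification that the single formula absorbs the vanishing branch via Definition~\ref{def:[H]-Mackey}.
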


An extremely important consequence of Corollary~\ref{cor:transfers_generate_value_Ind} is that we also have a very simple formula for the counit of the adjunction.

\begin{proposition}\label{prop: Indentifying the maps in the counit}
    For any \(\mM\) in \(\AboveHMackey[{[H]^{\cO}}]^G\)\index{above@\(\AboveHMackey[{[H]^{\cO}}]^G\)} and for any \(L\geq [H]^{\cO}\)\index{LaboveH@\(L\geq [H]^{\cO}\)}, the counit
    \[
        \bigoplus_{L_g^H\in\mathcal T_{L\geq [H]^{\cO}}/L}\big(\mM(G/L_g^H)\big)_{W_L(L_g^H)}\cong
        \IndHO\restrict{}{[H]^{\cO}}\mM(G/L)\to \mM(G/L)
    \]    
    is just the sum of the transfer maps
    \[
        tr_{L_g^H}^{L}\colon \big(\mM(G/L_g^H)\big)_{W_L(L_g^H)}\to \mM(G/L).
    \]
\end{proposition}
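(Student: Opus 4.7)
The plan is to identify the counit by unwinding the coend formula for $\IndHO$ and matching it with the explicit presentation of $\IndHO\restrict{}{[H]^{\cO}}\mM(G/L)$ furnished by Proposition~\ref{prop:Ind}. Since $\IndHO$ is the left Kan extension along the fully-faithful inclusion $\iota\colon\cA^{\cO}_{(H)}\hookrightarrow\cA^{\cO}$, the counit $\epsilon_{\mM}\colon \IndHO\restrict{}{[H]^{\cO}}\mM\to \mM$ is uniquely characterised by the property that its restriction along $\iota$ is the identity. In coend terms, on a summand $\cA^{\cO}(G/K,G/L)\otimes\mM(G/K)$ with $K\in[H]^{\cO}$, the counit sends $\alpha\otimes m$ to $\alpha_{\ast}m$, the action of the span $\alpha$ on $m$ through the Mackey structure of $\mM$ (which exists because $\mM$ lies in $\AboveHMackey[{[H]^{\cO}}]^G\subseteq \OMackey^G$).

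I would then combine this with the key observation extracted in the proof of Proposition~\ref{prop:Ind}: a generic class in $\IndHO\restrict{}{[H]^{\cO}}\mM(G/L)$ has the form $\indtr_{L_g^H}^{L}(m)$ for $L_g^H\in T_{L\geq [H]^{\cO}}/L$ and $m\in\mM(G/L_g^H)$, and by construction this class is represented by the simple tensor
\[
m\otimes \bigl(G/L_g^H\xleftarrow{=} G/L_g^H\to G/L\bigr).
\]
Evaluating the counit on this tensor is then just the action of the span in $\mM$: the left leg is the identity restriction and the right leg is the canonical quotient $G/L_g^H\to G/L$, so the composite acts as the transfer $\tr_{L_g^H}^{L}\colon \mM(G/L_g^H)\to\mM(G/L)$, exactly as claimed.

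The only remaining step is to verify that the map descends to the Weyl coinvariants $\mM(G/L_g^H)_{W_L(L_g^H)}$. This is automatic by naturality: the coend identifications that produce the Weyl coinvariants in Proposition~\ref{prop:Ind} must be preserved by $\epsilon_{\mM}$. Alternatively, one checks directly that for $a\in N_L(L_g^H)\subseteq L$, the conjugation $c_a^{\ast}\colon\mM(G/L)\to\mM(G/L)$ is the identity, while the standard Mackey relations give $\tr_{L_g^H}^{L}\circ c_a^{\ast}=\tr_{L_g^H}^{L}$. The main obstacle here is purely bookkeeping — once the simple-tensor representative of $\indtr_{L_g^H}^{L}(m)$ is recognised, the remainder is a direct application of the universal property of the left Kan extension.
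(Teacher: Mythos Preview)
Your proposal is correct and follows essentially the same line as the paper: both identify the class $\indtr_{L_g^H}^{L}(m)$ with the simple tensor representing a transfer and use that the counit is the identity on orbits $G/K$ with $K\in[H]^{\cO}$. The paper phrases the final step slightly more conceptually---since the counit is a map of Mackey functors it commutes with the transfer structure maps, so $\epsilon\bigl(\indtr_{L_g^H}^{L}(m)\bigr)=\tr_{L_g^H}^{L}\bigl(\epsilon(m)\bigr)=\tr_{L_g^H}^{L}(m)$---whereas you unwind the coend directly; but this is the same argument.
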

\begin{proof}
    The counit is a map of Mackey functors, and the element denoted \(\indtr_{L_g^H}^L(m)\) in the source is the transfer of \(m\in \mM(G/L_g^H)\). The result follows from the recollection that for every \(K\in [H]^{\cO}\), the map
    \[
        \IndHO\restrict{}{[H]^{\cO}}\mM(G/K)\to \mM(G/K)
    \]
    is a canonical isomorphism taking \(m\) to itself.
\end{proof}
  
\subsection{Analyzing the right adjoint}
A similar proof to Proposition \ref{prop:Ind} gives an identification of the coinduction functor. As in that result, the end formula decomposes into an indexed sum over spans of the form
\(G/L \leftarrow G/L_g^H \xrightarrow{=} G/L_g^H\) where \(L_{g}^{H}\in T_{L\geq [H]^{\cO}}/L\).\index{transfer@\(T_{L\geq [H]^{\cO}}\)}

\begin{proposition}\label{prop:Coind}
    For any \([H]^{\cO}\)-Mackey functor \(\mM\), \(\CoIndHO\mM \in \AboveHMackey[{[H]^{\cO}}]^G\)\index{above@\(\AboveHMackey[{[H]^{\cO}}]^G\)} satisfies the formula
    \[
        \CoIndHO\mM(G/L)=\begin{cases}
        0 & L\not\geq [H]^{\cO} \\
         \displaystyle\prod_{L_{g}^{H}\in T_{L\geq [H]^{\cO}}/L} \mM(G/(L_{g}^{H}))^{W_L(L_{g}^{H})} & L\geq [H]^{\cO}.
        \end{cases}
    \]\index{transfer@\(T_{L\geq [H]^{\cO}}\)}\index{coind@\(\CoIndHO\)}
\end{proposition}

\begin{notation}
    For \(m\in \mM(G/L_g^H)^{W_L(L_{g}^{H})}\) and \(L_{g}^{H}\in T_{L\geq [H]^{\cO}}/L\), 
    write \(\cotr_{L_g^H}^{L}(m)\)\index{cotransfer@\(\cotr_{L_g^H}^{L}(m)\)} for the element which in summand 
    \(L_{g}^{H}\) is \(m\).  
\end{notation}

We think of \(\cotr_{L_g^H}^{L}(m)\) as a formal co-transfer of \(m\).
In terms of the end formula, it corresponds to the map which sends the span
\(G/L \leftarrow G/L_g^H \xrightarrow{=} G/L_g^H\) to \(m\) and is zero elsewhere. 
As the end formula considers maps out of the Lindner category, we regard
the span \(G/L \leftarrow G/L_g^H \xrightarrow{=} G/L_g^H\) 
not as a restriction, but as a transfer and use the term co-transfer
for clarity. 
This interpretation allows us to 
describe conjugations, restrictions and transfers for \( \CoIndHO\mM\).

\begin{proposition}
    For a \([H]^{\cO}\)-Mackey functor \(\mM\), the conjugation by \(a\) map
    \[
        c_a^{\ast}\colon \CoIndHO\mM(G/L)\to \CoIndHO\mM(G/aLa^{-1})
    \]
    is determined by the formula
    \[
        c_a^{\ast} \cotr_{L_g^H}^L(m)=\cotr_{a(L_g^H)a^{-1}}^{aLa^{-1}} c_a^{\ast}(m),
    \]\index{cotransfer@\(\cotr_{L_g^H}^{L}(m)\)}
    where the right most \(c_a^{\ast}(m)\) is evaluated in the \([H]^{\cO}\)-Mackey functor \(\mM\).
\end{proposition}

We give descriptions of the restriction and transfer maps between the values at two subgroups which are both above \([H]^{\cO}\).

\begin{proposition}
    If \(K\geq [H]^{\cO}\) and \(K\subseteq L\), then the restriction map
    \[
        \res_{K}^{L}\colon \CoIndHO\mM(G/L)\to \CoIndHO\mM(G/K)
    \]
    is given by the formula
    \[
    \res_{K}^{L} \cotr_{L_g^H}^{L}(m)= \cotr_{K_g^H}^K \res_{K_g^H}^{L_g^H}(m).
    \]\index{cotransfer@\(\cotr_{L_g^H}^{L}(m)\)}
\end{proposition}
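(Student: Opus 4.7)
The plan is to mirror the proof of Proposition \ref{prop:Coind} but track how precomposition in the end formula encodes the restriction. Let $\sigma_{L_g^H, L} := (G/L \leftarrow G/L_g^H \xrightarrow{=} G/L_g^H) \in \cA^{\cO}(G/L, G/L_g^H)$ denote the generating span of the $L_g^H$-summand of $\CoIndHO\mM(G/L)$ identified in Proposition \ref{prop:Coind}. Then $\cotr_{L_g^H}^L(m)$ is the natural transformation in the end whose value on $\sigma_{L_g^H, L}$ is $m$ and whose value on $\sigma_{L_{g'}^H, L}$ is zero whenever $[L_{g'}^H] \neq [L_g^H]$ in $T_{L\geq [H]^{\cO}}/L$. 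The restriction $\res_K^L$ is induced by precomposition in the end with the span $\rho := (G/L \leftarrow G/K \xrightarrow{=} G/K) \in \cA^{\cO}(G/L, G/K)$.

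First I would compute, for an arbitrary generator $K_{g'}^H \in T_{K\geq [H]^{\cO}}/K$, the composite span $\sigma_{K_{g'}^H, K} \circ \rho$ by pullback. The pullback collapses to $(G/L \leftarrow G/K_{g'}^H \xrightarrow{=} G/K_{g'}^H)$, and using the inclusion $K_{g'}^H \subseteq L_{g'}^H$ this factors in $\cA^{\cO}$ as
\[
\bigl(G/L_{g'}^H \leftarrow G/K_{g'}^H \xrightarrow{=} G/K_{g'}^H\bigr) \circ \sigma_{L_{g'}^H, L},
\]
where the first factor is the restriction morphism $\res_{K_{g'}^H}^{L_{g'}^H}$ inside $\cA^{\cO}_{(H)}$ (since both subgroups are subconjugate to $H$). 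Naturality of the end then gives
\[
\cotr_{L_g^H}^L(m)\bigl(\sigma_{K_{g'}^H, K} \circ \rho\bigr) = \res_{K_{g'}^H}^{L_{g'}^H}\bigl(\cotr_{L_g^H}^L(m)(\sigma_{L_{g'}^H, L})\bigr).
\]

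Next I would invoke Proposition \ref{prop:Claim2}, which identifies $L_{g'}^H$ as the unique element of $T_{L\geq [H]^{\cO}}$ containing $K_{g'}^H$. By the defining property of $\cotr_{L_g^H}^L(m)$, the right-hand side vanishes unless $[L_{g'}^H]=[L_g^H]$ in $T_{L\geq [H]^{\cO}}/L$; choosing representatives compatibly so that this matching forces $g' = g$, the sole surviving component is $\res_{K_g^H}^{L_g^H}(m)$ in the summand indexed by $K_g^H$, which is exactly $\cotr_{K_g^H}^K \res_{K_g^H}^{L_g^H}(m)$. Well-definedness of the right-hand side under the Weyl invariance required by $\cotr$ follows from the containment $W_K(K_g^H) \subseteq W_L(L_g^H)$ of Proposition \ref{prop:Claim1}.

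The main obstacle is the careful bookkeeping of representatives: one must track how the $K$-equivariant injection $K_g^H \mapsto L_g^H$ from $T_{K\geq [H]^{\cO}}$ to $T_{L\geq [H]^{\cO}}$ descends (not injectively, in general) to the quotients by the normalizer actions, and verify that the proposed factorization of $\sigma_{K_{g'}^H, K}\circ\rho$ truly produces a morphism inside $\cA^{\cO}_{(H)}$ so that end-naturality applies. The degenerate case $K_g^H \notin [H]^{\cO}$ is consistent with the formula under the convention that the corresponding $\cotr$ summand is zero, matching the observation that $K_g^H \in T_{K \geq [H]^{\cO}}$ if and only if $K_g^H \in [H]^{\cO}$.
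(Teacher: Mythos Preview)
Your proposal is correct and follows essentially the same approach as the paper: both compute the restriction by precomposing the end element with the span $\rho=(G/L\leftarrow G/K\xrightarrow{=}G/K)$, factor the resulting composite $\sigma_{K_{g'}^H,K}\circ\rho$ through $\sigma_{L_{g'}^H,L}$ using $K_{g'}^H\subseteq L_{g'}^H$, and then invoke naturality of the end with respect to the restriction morphism in $\cA^{\cO}_{(H)}$. Your treatment is in fact more careful than the paper's brief argument, as you explicitly handle the representative bookkeeping, the Weyl-invariance check via Proposition~\ref{prop:Claim1}, and the degenerate case $K_g^H\notin[H]^{\cO}$.
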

\begin{proof}
    We describe the restriction to \(K\) of the element \(\cotr_{L_g^H}^{L}(m)\). 
    In the end description it is constructed from precomposition 
    by the restriction from \(L\) to \(K\).  
    Hence, in terms of the end formula it is the map on spans   
    \[
    \begin{array}{rcl}
        \big( G/K \leftarrow G/K_g^H \xrightarrow{=} G/K_g^H \big) 
        & \mapsto & 
        \big( G/L \leftarrow G/K_g^H \xrightarrow{=} G/K_g^H \big) \\
        & = & 
        \big( G/L_g^H \leftarrow G/K_g^H\to G/K_g^H \big) 
        \circ
        \big( G/L \leftarrow G/L_g^H\to G/L_g^H \big)
    \end{array}
    \] 
    combined with the map which sends \( G/L \leftarrow G/L_g^H\to G/L_g^H \) to \(m\).
    The end equalises post-composition with spans in \(\cA^{\cO}_{(H)}\)
    with the action of the spans on the Mackey functor. 
    Hence, it is the map   
    \[
        \big( G/K \leftarrow G/K_g^H\to G/K_g^H \big) 
        \mapsto 
        \res_{K_g^H}^{L_g^H}(m).    \qedhere
    \]
\end{proof}

\begin{proposition}
    If \(K\geq [H]^{\cO}\) and \(K \to L\), then the transfer map
    \[
        \tr_{K}^{L}\colon \CoIndHO\mM(G/K)\to \CoIndHO\mM(G/L)
    \]
    is given by the formula
        \[
        tr_K^L \cotr_{K_g^H}^K (m)
        =
        \sum_{\ell\in K\backslash L/L_g^H} 
        \cotr_{L_{\ell^{-1} g}^H}^L  
        \left(
        \tr_{(\ell^{-1} K\ell)_{\ell^-1 g}^{H}}^{L_{\ell^{-1} g}^H}  
        c_{\ell}^{\ast}       
        (m)
        \right)
        =
        \sum_{\ell\in K\backslash L/L_g^H} 
        \cotr_{L_{\ell^{-1} g}^H}^L  
        \left(
        c_{\ell}^{\ast}       
        \tr_{K_{g}^{H}}^{L_g^H}  
        (m)
        \right).
    \]   
\end{proposition}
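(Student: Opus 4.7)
The approach is to exploit the end description of \(\CoIndHO\mM\) in a manner dual to the coend approach used for Proposition~\ref{prop:restrictionequation}. Viewing an element \(\phi\) of \(\CoIndHO\mM(G/L)\) as the data of compatible maps \(\phi_J \colon \cA^{\cO}(G/L, G/J) \to \mM(G/J)\) for \(G/J\in \cA^{\cO}_{(H)}\), the transfer map is induced by precomposition with the transfer span \(\tau = (G/K \xleftarrow{=} G/K \to G/L)\), so \((\tr_K^L \phi)(\sigma) = \phi(\sigma \circ \tau)\). Since the projection onto the \(L_{g'}^H\)-summand in Proposition~\ref{prop:Coind} is computed by evaluating on the test span \(\sigma_{g'} = (G/L \leftarrow G/L_{g'}^H \xrightarrow{=} G/L_{g'}^H)\), it suffices to compute \(\cotr_{K_g^H}^K(m)(\sigma_{g'} \circ \tau)\) for each choice of \(g'\).

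The composite span \(\sigma_{g'}\circ\tau\colon G/K\to G/L_{g'}^H\) has middle object the pullback \(G/K\timesover{G/L}G/L_{g'}^H\), which via the identification \(G/K\timesover{G/L}G/L_{g'}^H\cong G\timesover{L}(L/K\times L/L_{g'}^H)\) and the double coset decomposition yields
\[
    \sigma_{g'}\circ\tau = \sum_{\ell\in K\backslash L/L_{g'}^H}\big(G/K\leftarrow G/K_{\ell g'}^H\xrightarrow{f_\ell} G/L_{g'}^H\big),
\]
where we use \(\ell\in L\) and \(K\subseteq L\) to get \(K\cap \ell L_{g'}^H\ell^{-1} = K_{\ell g'}^H\). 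Each summand factors as \(f_\ell\circ\beta_{\ell g'}\), where \(\beta_{\ell g'}=(G/K\leftarrow G/K_{\ell g'}^H\xrightarrow{=} G/K_{\ell g'}^H)\) and \(f_\ell\colon G/K_{\ell g'}^H\to G/L_{g'}^H\) lies in \(\cA^{\cO}_{(H)}\); naturality of the end rewrites the evaluation as \(\mM(f_\ell)\bigl(\cotr_{K_g^H}^K(m)(\beta_{\ell g'})\bigr)\).

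By the definition of the cotransfer, \(\cotr_{K_g^H}^K(m)(\beta_{\ell g'})\) vanishes unless \(K_{\ell g'}^H\) lies in the \(K\)-orbit of \(K_g^H\). Reparameterizing the sum by \(\ell\in K\backslash L/L_g^H\) and setting \(g'=\ell^{-1}g\) forces \(K_{\ell g'}^H=K_g^H\) and \(L_{g'}^H=L_{\ell^{-1}g}^H=\ell^{-1}L_g^H\ell\), while the induced map \(\mM(f_\ell)\) restricted to the \([H]^{\cO}\)-Mackey structure is precisely \(\tr_{(\ell^{-1}K\ell)_{\ell^{-1}g}^H}^{L_{\ell^{-1}g}^H}\circ c_\ell^\ast\). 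Packaging these contributions through the cotransfer up to \(L\) gives the first stated expression. The equality with the second expression is then an immediate consequence of the conjugation equivariance of transfers in \(\mM\), namely \(\tr_{(\ell^{-1}K\ell)_{\ell^{-1}g}^H}^{L_{\ell^{-1}g}^H}\circ c_\ell^\ast = c_\ell^\ast\circ\tr_{K_g^H}^{L_g^H}\).

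The principal obstacle is the careful tracking of coset representatives and conjugations in the pullback decomposition, particularly verifying that the terms, which all land in conjugate subgroups within the single \(L\)-orbit \([L_g^H]\) in \(T_{L\geq[H]^\cO}/L\), correctly repackage into the cotransfer description of the target component. This bookkeeping mirrors that of Proposition~\ref{prop:restrictionequation} with the roles of transfers and restrictions exchanged, and once the double coset identification of the pullback is in hand, the remainder is a direct application of naturality of the end.
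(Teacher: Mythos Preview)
Your proposal is correct and follows essentially the same approach as the paper's proof. Both arguments use the end description of \(\CoIndHO\mM\), interpret the transfer as precomposition with the span \(G/K\leftarrow G/K\to G/L\), compute the resulting pullback \(G/K\timesover{G/L}G/L_{g'}^H\) via the double coset decomposition, and then evaluate using the definition of \(\cotr\); the paper's version is considerably terser, simply recording the span calculation and noting that the only surviving term is the one with \(\ell^{-1}=a\), whereas you spell out the naturality step and the reparameterization explicitly.
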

\begin{proof}
    Consider an element \(\cotr_{K_g^H}^{K}(m)\). We calculate the action of the transfer from \(K\) to \(L\)
    on spans.
    \[
    \begin{array}{rcl}
        \big(G/L \leftarrow G/L_{ag}^H \xrightarrow{=} G/L_{ag}^H \big)
        & \mapsto & 
        \big(G/L \leftarrow G/L_{ag}^H \xrightarrow{=} G/L_{ag}^H \big)
        \circ
        \big( G/K \leftarrow G/K \rightarrow G/L \big)   \\
        & = & 
        \big(G/K \leftarrow G/K\timesover{G/L} G/L_{ag}^H \rightarrow G/L_{ag}^H\big) \\
        & = & 
        G/K \leftarrow G\timesover{L} \big(L/K \times L/L_{ag}^H \big) \rightarrow  G/L_{ag}^H
    \end{array}
    \]    
    Using the double coset decomposition of Proposition \ref{prop:restrictionequation} and    
    noting that \(K \cap L_{lag}^H = K_{lag}^H \cap L\), the last term is equivalent to 
    \[
    \sum_{l \in K \backslash L / L_{ag}^H}
    \left(  G/K_{lag}^H \leftarrow G/(K \cap L_{lag}^H) \rightarrow G/L_{ag}^H   \right)
    \circ
    \big( G/K \leftarrow G/K_{lag}^H \xrightarrow{=} G/K_{lag}^H \big).
    \]
    We then combine this with \(\cotr_{K_g^H}^{K}(m)\) and only get a non-zero term when
    $l^{-1}=a$, which gives the formula. 
\end{proof}

\begin{corollary}
    If \(\mM\) is in \([H]^{\cO}\mhyphen\Mackey^G\)\index{Hmackey@\([H]^\cO\mhyphen\Mackey^G\)}, then for any \(L\geq [H]^{\cO}\)\index{LaboveH@\(L\geq [H]^{\cO}\)}, the restriction map
    \[
       \prod_{L_g^H\in T_{L\geq [H]^{\cO}}/L} \big(\mM(G/L_g^H)\big)^{W_L(L_g^H)}\cong \CoIndHO \mM(G/L)\to \CoIndHO\mM(G/L_g^{H})\cong\mM(G/L_g^H)
    \]
    is just the composite of the projection onto the factor indexed by \(L_g^H\) followed by the inclusion
    \[
        \big(\mM(G/L_g^H)\big)^{W_L(L_g^H)}\hookrightarrow \mM(G/L_g^H).
    \]
\end{corollary}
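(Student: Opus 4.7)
The plan is to compute the restriction map directly using the formula for \(\res_K^L\) from the preceding proposition, specialized to \(K = L_g^H\). First I would observe that since \(L_g^H\in [H]^{\cO}\), the uniqueness clause of Proposition~\ref{prop: Minimal Admissible for J} pins down \(H_{\cO}(L_g^H) = gHg^{-1}\). Combined with Proposition~\ref{prop: Retraction is monotone}, this forces \(T_{L_g^H \geq [H]^{\cO}}/L_g^H\) to consist of the single element \(L_g^H\) itself: any intersection \(L_g^H \cap g'Hg'^{-1}\) that lands in \([H]^{\cO}\) would have \(H_{\cO}\) equal to \(gHg^{-1}\), and by maximality within \(L_g^H\) the intersection must be all of \(L_g^H\). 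Since \(W_{L_g^H}(L_g^H)\) is trivial, the formula of Proposition~\ref{prop:Coind} collapses to \(\CoIndHO \mM(G/L_g^H) \cong \mM(G/L_g^H)\), which is the right-hand isomorphism in the statement.

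Next I would apply the restriction formula to a general element of the product, decomposed as a sum \(\sum_{[g']} \cotr_{L_{g'}^H}^L(m_{g'})\) over \(L\)-conjugacy class representatives. For each term, the formula gives
\[
\res_{L_g^H}^L \cotr_{L_{g'}^H}^L(m_{g'}) = \cotr_{(L_g^H)_{g'}^H}^{L_g^H} \res_{(L_g^H)_{g'}^H}^{L_{g'}^H}(m_{g'}).
\]
The main step is a case analysis on \((L_g^H)_{g'}^H = L_g^H \cap g'Hg'^{-1}\). If this equals \(L_g^H\), then \(L_g^H \subseteq g'Hg'^{-1}\), and uniqueness of \(H_{\cO}(L_g^H)\) again forces \(g'Hg'^{-1} = gHg^{-1}\), hence \(L_{g'}^H = L_g^H\); so the \(L\)-conjugacy class of \(L_{g'}^H\) is \([L_g^H]\). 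Otherwise \((L_g^H)_{g'}^H\) is a proper subgroup of \(L_g^H\) and is not in \(T_{L_g^H \geq [H]^{\cO}}/L_g^H = \{L_g^H\}\); either it lies outside \([H]^{\cO}\) so \(\mM\) vanishes on it, or the formal cotransfer contributes to no summand of \(\CoIndHO \mM(G/L_g^H)\). Either way the contribution is zero.

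Thus only the summand indexed by \([L_g^H]\) contributes, and on that summand the formula collapses to \(\cotr_{L_g^H}^{L_g^H} \res_{L_g^H}^{L_g^H}(m_g) = m_g\). Under the identification \(\CoIndHO \mM(G/L_g^H) \cong \mM(G/L_g^H)\), this is precisely the image of \(m_g \in (\mM(G/L_g^H))^{W_L(L_g^H)}\) under the inclusion of fixed points into \(\mM(G/L_g^H)\), composed with the projection onto the \([L_g^H]\)-factor. The main obstacle is the case analysis in the second paragraph, but it is handled cleanly by the uniqueness property of \(H_{\cO}\).
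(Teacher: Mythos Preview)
Your argument is correct and is exactly the intended one: the paper states this corollary without proof immediately after the restriction formula for \(\CoIndHO\), and your computation specializing \(K=L_g^H\) in that formula is the natural way to extract it. One small remark: your hedge in the case analysis is unnecessary. When \((L_g^H)_{g'}^H\) is a proper subgroup of \(L_g^H\), it is automatically outside \([H]^{\cO}\); indeed, if it were in \([H]^{\cO}\) then \(H_{\cO}\big((L_g^H)_{g'}^H\big)\) would be a conjugate of \(H\) contained in both \(gHg^{-1}\) and \(g'Hg'^{-1}\), forcing \(gHg^{-1}=g'Hg'^{-1}\) and hence \((L_g^H)_{g'}^H=L_g^H\). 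So the ``contributes to no summand'' alternative never arises, and the vanishing is always because \(\mM\) is zero there.
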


\begin{proposition}\label{prop: Identifying the Maps in the Unit}
    If \(\mM\) is in \(\AboveHMackey[{[H]^{\cO}}]^G\)\index{above@\(\AboveHMackey[{[H]^{\cO}}]^G\)}, then for all \(L\geq [H]^{\cO}\), the components of the unit map
    \[
        \mM(G/L)\to \CoIndHO\restrict{}{[H]^{\cO}} \mM(G/L)\cong\prod_{L_g^H\in T_{L\geq [H]^{\cO}}/L} \big(\mM(G/L_g^H)\big)^{W_L(L_g^H)}
    \]
    are given by the restriction maps:
    \[
        \res_{L_g^H}^{L}\colon \mM(G/L)\to \mM(G/L_g^H)^{W_L(L_g^H)}.
    \]
\end{proposition}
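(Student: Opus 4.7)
The plan is to read the component of the unit $\eta_{\mM}$ directly off the end formula for the right Kan extension $\CoIndHO$, combined with the simplification in Proposition~\ref{prop:Coind}. Because $\CoIndH[(H)]$ is the right Kan extension along the fully faithful inclusion $\iota\colon\cA^{\cO}_{(H)}\hookrightarrow\cA^{\cO}$, the unit of the adjunction $\restrict{}{(H)}\dashv\CoIndH[(H)]$ at an object $G/L$ sends $m\in\mM(G/L)$ to the natural family whose value on a span
\[
\alpha\colon G/L\xleftarrow{f}G/J\xrightarrow{g}G/K,\qquad K,J\in\mathcal F(H),
\]
is the action of $\alpha$ on $\mM$, namely $\alpha_{*}(m)=g_{*}f^{*}(m)\in\mM(G/K)$. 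The unit for the restricted adjunction of Theorem~\ref{thm:res_ind_coind_exist} is inherited from this one.

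The first step is to recall, via the factorization arguments used in the proof of Proposition~\ref{prop:Ind} (and implicit in Proposition~\ref{prop:Coind}), how this end collapses onto the product $\prod_{L_{g}^{H}\in T_{L\geq[H]^{\cO}}/L}\mM(G/L_{g}^{H})^{W_{L}(L_{g}^{H})}$, with each factor represented by the canonical span
\[
\sigma_{g}\colon G/L\xleftarrow{\pi}G/L_{g}^{H}\xrightarrow{=}G/L_{g}^{H}.
\]
Any span in $\cA^{\cO}(G/L,G/K)$ with $K\in\mathcal F(H)$ either factors through some $\sigma_{g}$ (by Lemma~\ref{lem:NON_Ab_transfer_up_to_L} together with the relative family property of $[H]^{\cO}$), or has middle object not in $[H]^{\cO}$, in which case it contributes zero since $\restrict{}{[H]^{\cO}}\mM$ vanishes off $[H]^{\cO}$.

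Next I would identify the component explicitly. The action of $\sigma_{g}$ on $\mM$ is by construction the composite $\mathrm{id}_{*}\circ\pi^{*}=\res_{L_{g}^{H}}^{L}$, so the $L_{g}^{H}$-component of $\eta_{\mM}(m)$ under the identification of Proposition~\ref{prop:Coind} is exactly $\res_{L_{g}^{H}}^{L}(m)$. Invariance under $W_{L}(L_{g}^{H})$ is automatic from the end construction; explicitly, for any $n\in N_{L}(L_{g}^{H})$ one has $c_{n}^{*}\res_{L_{g}^{H}}^{L}(m)=\res_{L_{g}^{H}}^{L}c_{n}^{*}(m)=\res_{L_{g}^{H}}^{L}(m)$ because $n\in L$ acts trivially on $\mM(G/L)$.

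No substantive obstacle is expected: the argument is essentially bookkeeping, matching the general formula for the unit of a fully faithful right Kan extension against the explicit end description, and the only real check is that the canonical span $\sigma_{g}$ is exactly the one indexing the $L_{g}^{H}$-summand in the identification of Proposition~\ref{prop:Coind}.
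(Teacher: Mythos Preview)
Your argument is correct. The paper does not give an explicit proof of this proposition; it is stated as the evident dual of Proposition~\ref{prop: Indentifying the maps in the counit}, whose short proof uses that the counit is a Mackey functor map which is the identity on orbits in \([H]^{\cO}\). Your approach---reading the unit directly off the end formula for the right Kan extension and identifying the \(L_g^H\)-component via the canonical span \(\sigma_g\)---is a valid alternative and is entirely in line with the explicit end/coend analysis the paper carries out for Propositions~\ref{prop:Ind} and~\ref{prop:Coind}.
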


\subsection{Frobenius reciprocity}

The classical Frobenius relation shows that in the complete case, the left and right adjoints agree, rationally. The same is true for the incomplete case, see Theorem \ref{thm:left=rightAdjoint}. The proof of that theorem relies on reducing the problem
to the following lemma. 

\begin{lemma}\label{lem: Mackey Relation in Above H}
    If \(\mM\in\AboveHMackey[{[H]^{\cO}}]^G\)\index{above@\(\AboveHMackey[{[H]^{\cO}}]^G\)}, then for all \(L\geq [H]^{\cO}\), the composite
    \[
        \mM(G/L_g^H)\xrightarrow{tr_{L_g^H}^{L}} \mM(G/L)\xrightarrow{res_{L_{\tilde{g}}^H}^{L}} \mM(G/L_{\tilde{g}}^{H})
    \]
    is zero unless \(L_g^H\)\index{Lintersect@\(L_g^{H}\)} and \(L_{\tilde{g}}^{H}\) are \(L\)-conjugate. If they are \(L\)-conjugate by \(\ell'\), then the composite is
    \[
        \sum_{\ell\in N_{L}(L_g^H)/L_g^H} c_{\ell'}^\ast\circ c_{\ell}^\ast.
    \]
\end{lemma}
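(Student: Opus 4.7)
The plan is to apply the classical Mackey double coset formula and then use the vanishing hypothesis on $\mM$ to kill most of the resulting terms. Specifically, the composite decomposes as
\[
    \res^L_{L_{\tilde g}^H}\circ \tr^L_{L_g^H} = \sum_{x\in L_{\tilde g}^H\backslash L/L_g^H}\tr^{L_{\tilde g}^H}_{L_{\tilde g}^H\cap xL_g^Hx^{-1}}\circ c_x^{\ast}\circ \res^{L_g^H}_{x^{-1}L_{\tilde g}^Hx\cap L_g^H}.
\]
Since $x\in L$, we have $xL_g^Hx^{-1}=L\cap xgHg^{-1}x^{-1}=L_{xg}^H$ in the notation of Notation~\ref{not:LgH}, and $L_{xg}^H\in T_{L\geq [H]^{\cO}}$ by Proposition~\ref{prop:N_GLsetT}. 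The intermediate subgroup in each summand is therefore $L_{\tilde g}^H\cap L_{xg}^H$.

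The main step is to show that $L_{\tilde g}^H\cap L_{xg}^H$ fails to be above $[H]^{\cO}$ unless $L_{\tilde g}^H=L_{xg}^H$. Suppose $K\in [H]^{\cO}$ with $K\subseteq L_{\tilde g}^H\cap L_{xg}^H$; then $K$ is contained in both of the $\cO$-admissible conjugates $\tilde gH\tilde g^{-1}$ and $xgHg^{-1}x^{-1}$, so by the minimality clause of Proposition~\ref{prop: Minimal Admissible for J} the subgroup $H_{\cO}(K)$ lies in their intersection. Since $K\in [H]^{\cO}$ forces $H_{\cO}(K)$ to be $G$-conjugate to $H$, and hence to have order $|H|$, the containment $H_{\cO}(K)\subseteq \tilde gH\tilde g^{-1}$ is an equality, and likewise $H_{\cO}(K)=xgHg^{-1}x^{-1}$. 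Intersecting with $L$ then yields $L_{\tilde g}^H=L_{xg}^H$, as claimed. By the definition of $\AboveHMackey[{[H]^{\cO}}]^G$, the Mackey functor $\mM$ vanishes on $G/J$ whenever $J$ is not above $[H]^{\cO}$, so each summand is automatically zero unless $L_{\tilde g}^H=xL_g^Hx^{-1}$, which is exactly the condition that $L_g^H$ and $L_{\tilde g}^H$ be $L$-conjugate. This proves the first assertion.

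For the second assertion, fix $\ell'$ with $\ell'L_g^H(\ell')^{-1}=L_{\tilde g}^H$. The set of $x\in L$ satisfying $xL_g^Hx^{-1}=L_{\tilde g}^H$ is the coset $\ell' N_L(L_g^H)$, and since $L_g^H$ is normal in $N_L(L_g^H)$, the double coset representatives in $L_{\tilde g}^H\backslash L/L_g^H$ lying inside this coset biject with $N_L(L_g^H)/L_g^H$. For each such $x=\ell'\ell$ the intermediate subgroup is all of $L_{\tilde g}^H$, so the transfer and restriction in the summand become identities and the term reduces to $c_{\ell'\ell}^{\ast}=c_{\ell'}^{\ast}\circ c_{\ell}^{\ast}$, yielding precisely the stated formula.

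The main obstacle is the key lemma in the second paragraph; once that is in hand, the rest is the standard Mackey formula together with routine coset bookkeeping. That lemma is the structural manifestation of the facts that distinct maximal elements of $T_{L\geq [H]^{\cO}}$ live in different $G$-conjugates of $H$ (Proposition~\ref{prop: TLgeqH maps to GmodNH}) and that a subgroup in $[H]^{\cO}$ is pinned to a single conjugate of $H$ by Proposition~\ref{prop: J not in any distinct conjugate of H}, so it cannot sit inside the intersection of two distinct conjugates.
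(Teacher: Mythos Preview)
Your proof is correct and follows essentially the same approach as the paper: apply the Mackey double coset formula, then argue that each intermediate subgroup \(L_{\tilde g}^H\cap xL_g^Hx^{-1}\) lies in the intersection of two conjugates of \(H\), which forces \(\mM\) to vanish there unless those conjugates coincide. Your justification of the key vanishing step (via \(H_{\cO}(K)\) and a cardinality argument) is a slightly more explicit version of the paper's appeal to Corollary~\ref{cor:inseperability_representatives}, and your coset bookkeeping for the surviving terms matches the paper's reduction to \(N_L(L_g^H)/L_g^H\).
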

\begin{proof}
    We look at the double coset formula
    \[
        res_{L_{\tilde{g}}^H}^{L}\circ tr_{L_g^H}^{L}=\sum_{\ell \in L_g^H\backslash L/L_{\tilde{g}}^H} \tr_{L_g^H\cap \ell L_{\tilde{g}}^H\ell^{-1}}^{L_g^H}\circ c_{\ell}^\ast\circ \res_{\ell^{-1}L_g^H\ell\cap L_{\tilde{g}}^{H}}^{L_{\tilde{g}}^H}.
    \]
    If \(L_g^H\) and \(L_{\tilde{g}}^{H}\) are in distinct \(L\)-orbits, then by definition, 
    \[
        L_g^H\neq \ell L_{\tilde{g}}^H\ell^{-1},
    \]
    and hence 
    \[
        gHg^{-1}\neq \ell \tilde{g} H\tilde{g}^{-1}\ell^{-1}.
    \]
    By Corollary~\ref{cor:inseperability_representatives}, we therefore have that for all \(\ell\),
    \[
        L_g^H\cap \ell L_{\tilde{g}}^H\ell^{-1}\subseteq gHg^{-1}\cap \ell \tilde{g} H\tilde{g}^{-1}\ell^{-1}\not\in [H]^{\cO}.
    \]
    This means that the target of the restriction map is zero, which gives the first part.

    Now, if both are in the same \(L\)-orbit, then by conjugating by the corresponding value of \(\ell'\), we may without loss of generality assume that \(L_g^H=L_{\tilde{g}}^{H}\). If \(\ell\in L\) is not in the normalizer of \(gHg^{-1}\), then 
    \[
        L_g^H\cap \ell L_g^H {\ell}^{-1}\subseteq gHg^{-1}\cap \ell gHg^{-1}{\ell}^{-1}\not\in [H]^{\cO},
    \]
    and hence the target of the restriction for this summand is again zero. The only summands which can contribute in a non-zero way are therefore those for which 
    \[
        \ell\in N_{L}(gHg^{-1})=N_L(L_g^H),
    \]
    by Proposition~\ref{prop: Normalizer in L of LgH}. In this case, 
    \[
        L_g^H\cap \ell L_g^H\ell^{-1}=L_g^H,
    \]
    and we deduce the desired formula.
\end{proof}

Combining this with Proposition~\ref{prop: Indentifying the maps in the counit} and with Proposition~\ref{prop: Identifying the Maps in the Unit}, this gives us the desired Frobenius isomorphism in the rational case.

\begin{theorem}\label{thm:left=rightAdjoint}
    For any rational \(\mackabove_{[H]^{\cO}}\)-Mackey functor \(\mM\), the composite of the counit followed by the unit 
    \[
        \IndHO\restrict{}{[H]^{\cO}}\mM\to \mM \to \CoIndHO \restrict{}{[H]^{\cO}}\mM
    \]\index{Ind@\(\IndHO\)}\index{restrict@\(\restrict{}{[H]^{\cO}}\)}\index{coind@\(\CoIndHO\)}
    is an isomorphism.
\end{theorem}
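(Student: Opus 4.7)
The plan is to analyze the composite $\IndHO\restrict{}{[H]^{\cO}}\mM \to \mM \to \CoIndHO \restrict{}{[H]^{\cO}}\mM$ objectwise on the orbits $G/L$, and reduce each component to a norm map from coinvariants to invariants, which is an isomorphism over $\bQ$. Since $\mM$ and both functors $\IndHO\restrict{}{[H]^{\cO}}$ and $\CoIndHO\restrict{}{[H]^{\cO}}$ take values in $\AboveHMackey[{[H]^{\cO}}]^G$, all three values vanish whenever $L$ is not above $[H]^{\cO}$, and there is nothing to check. I therefore fix $L\geq [H]^{\cO}$ for the rest of the argument.

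Propositions~\ref{prop: Indentifying the maps in the counit} and~\ref{prop: Identifying the Maps in the Unit} identify the counit on $G/L$ with the sum of the transfers $\tr_{L_g^H}^{L}$ and the unit with the product of the restrictions $\res_{L_g^H}^{L}$. Using the formulas of Propositions~\ref{prop:Ind} and~\ref{prop:Coind}, the composite therefore decomposes into a matrix of maps
\[
\res_{L_{\tilde g}^H}^{L}\circ \tr_{L_g^H}^{L} \colon \mM(G/L_g^H)_{W_L(L_g^H)} \longrightarrow \mM(G/L_{\tilde g}^H)^{W_L(L_{\tilde g}^H)}
\]
indexed by pairs of $L$-orbit representatives in $T_{L\geq [H]^{\cO}}$. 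By Lemma~\ref{lem: Mackey Relation in Above H}, all off-diagonal entries (those where $L_g^H$ and $L_{\tilde g}^H$ represent distinct $L$-orbits) are zero, so the composite is block diagonal. On each diagonal block the same lemma identifies the composite with the norm map
\[
N_{W_L(L_g^H)} := \sum_{w\in W_L(L_g^H)} c_w^{\ast} \colon \mM(G/L_g^H)_{W_L(L_g^H)} \longrightarrow \mM(G/L_g^H)^{W_L(L_g^H)}.
\]

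The conclusion then follows from the standard rational averaging argument: $|W_L(L_g^H)|$ divides $|G|$ and is therefore invertible in $\bQ$, so the norm map from coinvariants to invariants is an isomorphism of $\bQ$-vector spaces (with inverse induced by division by $|W_L(L_g^H)|$). The essential work has already been absorbed into Lemma~\ref{lem: Mackey Relation in Above H}, whose use of the control over inseparability classes kills both the off-diagonal terms and the off-normalizer summands that would otherwise appear in the double coset formula. Once that is in hand, the rest of the proof is bookkeeping combined with the classical fact that for finite group actions on rational vector spaces, the norm map identifies coinvariants with invariants.
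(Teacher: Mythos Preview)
Your proposal is correct and follows essentially the same approach as the paper: reduce the composite objectwise to a matrix of maps $\res_{L_{\tilde g}^H}^{L}\circ \tr_{L_g^H}^{L}$ using Propositions~\ref{prop: Indentifying the maps in the counit} and~\ref{prop: Identifying the Maps in the Unit}, apply Lemma~\ref{lem: Mackey Relation in Above H} to see it is block diagonal with diagonal blocks given by the trace (norm) map, and conclude by the standard rational isomorphism between coinvariants and invariants. The paper's proof is slightly terser but structurally identical.
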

\begin{proof}
Let \(\kappa\) denote the composite \(\eta_{\mM}\circ \epsilon_{\mM}\), and let \(L\) be a subgroup above \([H]^{\cO}\). Let
\[
    \iota_{L_g^H}\colon \big(\mM(G/L_g^H)\big)_{W_L(L_g^H)}\to \IndHO\mM(G/L)
\]
be the inclusion of the summand corresponding to \(L_g^H\), and dually let
\[
    \pi_{L_g^H}\colon \CoIndHO\mM(G/L)\to \big(\mM(G/L_g^H)\big)^{W_L(L_g^H)}
\]
be the projection onto the corresponding factor.

Consider the composite 
\[
\pi_{L_{\tilde{g}^H}}\circ\kappa\circ\iota_{L_g^H}\colon\big(\mM(G/L_g^H)\big)_{W_L(L_g^H)}\to
\big(\mM(G/L_{\tilde{g}}^H)\big)^{W_{L}(L_{\tilde{g}}^H)}.
\]
Proposition~\ref{prop: Indentifying the maps in the counit} identifies the first map in \(\kappa\) with sum of the transfers, while Proposition~\ref{prop: Identifying the Maps in the Unit} identifies the second with the product of the restrictions. The composite is therefore exactly the map considered in Lemma~\ref{lem: Mackey Relation in Above H}, and hence is zero unless \(L_g^H\) and \(L_{\tilde{g}}^H\) represent the same equivalence class. When they are the same equivalence class, the map is the usual trace map
\[
    \big(\mM(G/L_g^H)\big)_{W_L(L_g^H)}\to \big(\mM(G/L_{\tilde{g}}^H)\big)^{W_{L}(L_{\tilde{g}}^H)},
\]
which is a rational isomorphism.
\end{proof}

\begin{corollary}\label{cor: Identifying the Image of IndH}
    Let \(\mM\) be in \(\AboveHMackey[{[H]^{\cO}}]^G_\Q\)\index{above@\(\AboveHMackey[{[H]^{\cO}}]^G\)}. If for all \(L\geq [H]^{\cO}\), we have that the transfer maps
    \[
        \bigoplus_{L_g^H\in T_{L\geq [H]^{\cO}}/L} \mM(G/L_g^H)\to \mM(G/L)
    \]
    are surjective, then \(\mM\) is in the essential image of induction.
\end{corollary}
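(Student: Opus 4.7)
The natural candidate is the counit of the induction-restriction adjunction: setting \(\mN = \restrict{}{[H]^{\cO}}\mM\), I aim to show that the counit
\[
    \epsilon_{\mM}\colon \IndHO \restrict{}{[H]^{\cO}}\mM \longrightarrow \mM
\]
is an isomorphism. Once this is established, \(\mM\) is (isomorphic to) the induction of an object in \([H]^{\cO}\mhyphen\Mackey^G_\bQ\) and hence lies in the essential image of \(\IndHO\).

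The plan is to check \(\epsilon_{\mM}\) is an isomorphism componentwise. On \(G/L\) for \(L\) not above \([H]^{\cO}\), Proposition \ref{prop:Ind} shows the source vanishes, and by assumption \(\mM\) vanishes on such \(L\), so the map is trivially an isomorphism. For \(L\geq [H]^{\cO}\), Proposition \ref{prop: Indentifying the maps in the counit} identifies \(\epsilon_{\mM}\) at \(G/L\) with
\[
    \bigoplus_{L_g^H \in T_{L\geq [H]^{\cO}}/L} \big(\mM(G/L_g^H)\big)_{W_L(L_g^H)} \xrightarrow{\;\sum \tr_{L_g^H}^L\;} \mM(G/L).
\]

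For surjectivity, the hypothesis says that \(\bigoplus_{L_g^H\in T_{L\geq [H]^{\cO}}/L} \mM(G/L_g^H)\to \mM(G/L)\) is surjective; since the quotient map to the coinvariants is itself surjective, the induced map from \(\bigoplus (\mM(G/L_g^H))_{W_L(L_g^H)}\) remains surjective. For injectivity, I invoke Theorem \ref{thm:left=rightAdjoint}: the composite \(\eta_{\mM}\circ \epsilon_{\mM}\colon \IndHO\restrict{}{[H]^{\cO}}\mM \to \CoIndHO\restrict{}{[H]^{\cO}}\mM\) is an isomorphism of Mackey functors. In particular \(\epsilon_{\mM}\) is split injective at every orbit \(G/L\), so in combination with the surjectivity just obtained, \(\epsilon_{\mM}\) is an isomorphism at every \(G/L\), and hence an isomorphism of Mackey functors.

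The nontrivial input here is the rational Frobenius-type result Theorem \ref{thm:left=rightAdjoint}, which in turn relies on Lemma \ref{lem: Mackey Relation in Above H}; the rest is bookkeeping. No obstacle is expected beyond correctly matching Proposition \ref{prop: Indentifying the maps in the counit} with the surjectivity hypothesis and exactness of coinvariants rationally.
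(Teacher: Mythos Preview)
Your proof is correct and follows essentially the same approach as the paper: both consider the counit \(\epsilon_{\mM}\), use Theorem~\ref{thm:left=rightAdjoint} to get injectivity, and use the hypothesis together with Proposition~\ref{prop: Indentifying the maps in the counit} to get surjectivity. Your version is simply more explicit about the factorization through coinvariants and the trivial case \(L\not\geq[H]^{\cO}\), details the paper leaves implicit.
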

\begin{proof}
    Consider the counit of adjunction:
    \[
        \IndHO\restrict{}{[H]^{\cO}}\mM\to\mM.
    \]
    Theorem~\ref{thm:left=rightAdjoint} shows that this map is injective, and our assumption is that it is also surjective.
\end{proof}

\section{Understanding and simplifying the splitting}

\subsection{Understanding the splitting}

Corollary~\ref{cor: Idempotents are Above H} shows that if \(\mM\) is any \(e_{[H]^{\cO}}\mA^{\cO}_{\Q}\)-module, then \(\mM\) is in \(\AboveHMackey[{[H]^{\cO}}]^G_\Q\). In fact, it is automatically induced up from \([H]^{\cO}\mhyphen\Mackey^G_\Q\)\index{Hmackey@\([H]^\cO\mhyphen\Mackey^G_\Q\)}.

\begin{proposition}\label{prop:moduleisinduction}
    Any \(e_{[H]^{\cO}}\mA^{\cO}_{\Q}\)-module \(\mM\) is of the form \(\IndHO\restrict{}{[H]^{\cO}}\mM\).
\end{proposition}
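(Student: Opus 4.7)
The plan is to verify the hypothesis of Corollary~\ref{cor: Identifying the Image of IndH}: for every $L \geq [H]^{\cO}$, the transfer map
\[
\bigoplus_{L_g^H \in T_{L \geq [H]^{\cO}}/L} \mM(G/L_g^H) \to \mM(G/L)
\]
is surjective. Note that Corollary~\ref{cor: Idempotents are Above H} already places $\mM$ in $\AboveHMackey[{[H]^{\cO}}]^G_\Q$, so this is the only remaining hypothesis to check.

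The engine of the argument will be Corollary~\ref{cor:idempotenttransfer}, which exhibits $e_{[H]^{\cO}}$ as a transfer $\tr_H^G(\xi)$. First I would take $m \in \mM(G/L)$ and use the module structure to write $m = \res^G_L(e_{[H]^{\cO}}) \cdot m$, since $e_{[H]^{\cO}}$ acts as the identity. Applying the Mackey double coset formula to $\res^G_L \tr^G_H(\xi)$ then expresses
\[
\res^G_L(e_{[H]^{\cO}}) = \sum_{g \in L\backslash G/H} \tr^L_{L_g^H}(y_g),
\]
with $y_g \in \mA^\cO_\Q(G/L_g^H)$; all these transfers exist in $\cO$ because $gHg^{-1} \to G$ restricts to $L_g^H \to L$. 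Frobenius reciprocity for the Green functor $\mA^\cO_\Q$ acting on $\mM$ then rewrites
\[
m = \sum_g \tr^L_{L_g^H}\bigl(y_g \cdot \res^L_{L_g^H}(m)\bigr),
\]
presenting $m$ as a sum of transfers from the $L_g^H$.

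The remaining step, and I expect it to be the main technical obstacle, is identifying which subgroups $L_g^H$ actually contribute and showing they all lie in $T_{L \geq [H]^{\cO}}/L$. Each $L_g^H$ is contained in the conjugate $gHg^{-1}$, which is itself in $[H]^{\cO}$. If $L_g^H$ is not above $[H]^{\cO}$, then $\mM(G/L_g^H) = 0$ by the first step and the term vanishes. Otherwise, some $K \in [H]^{\cO}$ satisfies $K \subseteq L_g^H \subseteq gHg^{-1}$, and the relative family property of Corollary~\ref{cor: Relative Family}, applied to the conjugate representative $gHg^{-1}$, forces $L_g^H \in [H]^{\cO}$. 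Finally, Proposition~\ref{prop: TLgeqH maps to GmodNH} shows that $L_g^H$ is then the unique element of $T_{L \geq [H]^{\cO}}$ in the conjugate component indexed by $gHg^{-1}$, so the nonvanishing terms in the sum land precisely in the indexed transfers appearing in the hypothesis of Corollary~\ref{cor: Identifying the Image of IndH}. This yields surjectivity and hence the result.
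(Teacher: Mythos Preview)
Your proof is correct and follows essentially the same approach as the paper's: both reduce to Corollary~\ref{cor: Identifying the Image of IndH}, express the action of \(e_{[H]^{\cO}}\) on \(\mM(G/L)\) via Frobenius reciprocity and the double coset formula, and then discard the terms coming from subgroups not in \([H]^{\cO}\). The only organizational difference is that the paper first invokes the M\"obius formula (Lemma~\ref{lem:idempotentformula}) to simplify the action of \(e_{[H]^{\cO}}\) on \(\mM\) to that of \(\tfrac{1}{|W_G(H)|}\,G/H\) before applying the double coset formula, whereas you start from Corollary~\ref{cor:idempotenttransfer} and apply the double coset formula immediately; your final paragraph identifying the surviving \(L_g^H\) as elements of \(T_{L\geq [H]^{\cO}}\) is handled in the paper by the single phrase ``the double coset formula and the definition of \(T_{L\geq [H]^{\cO}}\).''
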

\begin{proof}
    By Corollary~\ref{cor: Identifying the Image of IndH}, it suffices to show that the transfer maps
    \[
        \bigoplus_{L_g^H\in T_{L\geq [H]^{\cO}}/L}\mM(G/L_g^H)\to \mM(G/L)
    \]
    are surjective for any \(L\geq [H]^{\cO}\). By the usual Frobenius relation in a Mackey functor, it is sufficient to show that on \(\mM\),
    \[
        \res_L^G (e_{[H]^{\cO}})=\sum_{L_g^H\in T_{L\geq [H]^{\cO}}/L} \alpha_g tr_{L_g^H}^L(e_{L_g^H})
    \]
    for some virtual \(L_g^H\)-sets \(e_{L_g^H}\). We compute this using Lemma~\ref{lem:idempotentformula}:
    \[
        \res_L^G (e_{[H]^{\cO}})=\sum_{K\to H} \frac{|K|}{|N_GH|} \mu^{\cO}(K,H) i_L^\ast(G/K).
    \]
    Note that by construction, if \(K\to H\) and \(K\subsetneq H\), then \(K\) is not above \([H]^{\cO}\), and hence \(G/K\) in the Burnside ring acts as zero. This means that on \(\mM\), the idempotent \(e_{[H]^{\cO}}\) acts as:
    \[
        \frac{1}{W_{G}(H)} G/H.
    \]
    The result now follows from the double coset formula and the definition of \(T_{L\geq [H]^{\cO}}\)
\end{proof}

Since \(e_{[H]^{\cO}}\mA_{\Q}^{\cO}\) is an idempotent localization of \(\mA_{\Q}^{\cO}\), being an \(e_{[H]^{\cO}}\mA_{\Q}^{\cO}\)-module is a property of a rational \(\cO\)-Mackey functor, and the forgetful functor to rational \(\cO\)-Mackey functors is fully-faithful.

\begin{corollary}\label{cor: eH-Mod FF embeds}
    The forgetful functor is a fully-faithful functor
    \[
        e_{[H]^{\cO}}\mA_{\Q}^{\cO}\mhyphen\Mod\to [H]^{\cO}\mhyphen\Mackey^G_{\bQ}.
    \]
\end{corollary}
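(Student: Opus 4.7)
The plan is to assemble this statement from three earlier pieces: the idempotent splitting of \(\OMackey^G_{\bQ}\), the fact that every module is induced (Proposition \ref{prop:moduleisinduction}), and the fully-faithfulness of \(\IndHO\) encoded in Corollary \ref{cor:indH is fully faithful}. No new calculation is needed.

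First I would factor the forgetful functor as the inclusion
\[
e_{[H]^{\cO}}\mA_{\Q}^{\cO}\mhyphen\Mod \hookrightarrow \mA_{\Q}^{\cO}\mhyphen\Mod \cong \OMackey^G_{\bQ}
\]
followed by the restriction functor \(\restrict{}{[H]^{\cO}}\). The first inclusion is fully faithful by the standard idempotent formalism: an \(e_{[H]^{\cO}}\mA_{\Q}^{\cO}\)-module is precisely a rational \(\cO\)-Mackey functor on which \(1 - e_{[H]^{\cO}}\) acts as zero, and morphisms in the two categories coincide. Hence the work reduces to showing that \(\restrict{}{[H]^{\cO}}\) lands in \([H]^{\cO}\mhyphen\Mackey^G_{\bQ}\) and is fully faithful on this image.

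For well-definedness, fix an \(e_{[H]^{\cO}}\mA_{\Q}^{\cO}\)-module \(\mM\) and a subgroup \(L \subseteq H\) with \(L \notin [H]^{\cO}\). Then \(L\) cannot be above \([H]^{\cO}\): if some \(K \in [H]^{\cO}\) satisfied \(K \subseteq L \subseteq H\), then Corollary \ref{cor: Relative Family} would force \(L \in [H]^{\cO}\), a contradiction. Thus \(\mM(G/L) = 0\) by Corollary \ref{cor: Idempotents are Above H}, so \(\restrict{}{[H]^{\cO}}\mM\) satisfies the defining vanishing condition for \([H]^{\cO}\mhyphen\Mackey^G_{\bQ}\) in Definition \ref{def:[H]-Mackey}.

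For fully-faithfulness, given two such modules \(\mM_1, \mM_2\), Proposition \ref{prop:moduleisinduction} gives natural isomorphisms \(\mM_i \cong \IndHO \restrict{}{[H]^{\cO}} \mM_i\). Combining the \((\IndHO \dashv \restrict{}{[H]^{\cO}})\)-adjunction with the unit isomorphism of Corollary \ref{cor:indH is fully faithful}, we obtain
\[
\Hom_{\OMackey^G_{\bQ}}(\mM_1, \mM_2) \cong \Hom(\IndHO \restrict{}{[H]^{\cO}} \mM_1, \mM_2) \cong \Hom_{[H]^{\cO}\mhyphen\Mackey^G_{\bQ}}(\restrict{}{[H]^{\cO}} \mM_1, \restrict{}{[H]^{\cO}} \mM_2),
\]
and tracing the natural isomorphisms shows the composite is exactly the map induced by applying \(\restrict{}{[H]^{\cO}}\) on morphisms. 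The only potential obstacle is the naturality/triangle bookkeeping to confirm this composite is literally the restriction on Hom-sets, and this follows routinely from the triangle identities together with Proposition \ref{prop:moduleisinduction}.
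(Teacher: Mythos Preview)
Your proof is correct and uses the same ingredients as the paper (the idempotent fully-faithfulness of the inclusion into \(\OMackey^G_{\bQ}\) together with Proposition~\ref{prop:moduleisinduction}). The paper's version is terser only because it invokes the convention, recorded just after Corollary~\ref{cor:indH is fully faithful}, identifying \([H]^{\cO}\mhyphen\Mackey^G\) with the essential image of \(\IndHO\): under that identification both source and target are full subcategories of \(\OMackey^G_{\bQ}\), so Proposition~\ref{prop:moduleisinduction} gives the containment and fully-faithfulness is automatic without your adjunction bookkeeping.
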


\begin{lemma}\label{lem:idempotentactionidentity}
    Let \(\mM \in [H]^{\cO}\mhyphen\Mackey^G_{\bQ}\)\index{Hmackey@\([H]^\cO\mhyphen\Mackey^G_\Q\)}. The idempotent
    \(e_{[H]^{\cO}}\) acts on \(\mM\) as the identity.
\end{lemma}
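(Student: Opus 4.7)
My plan is to verify the identity action level by level, that is, to show multiplication by $\res_L^G(e_{[H]^{\cO}})$ is the identity on $\mM(G/L)$ for every subgroup $L$ subconjugate to $H$. Since the statement is conjugation-invariant and Proposition~\ref{prop: tombstones} shows that every element of $[H]^{\cO}$ is $G$-conjugate into $H$, I may reduce to $L\subseteq H$. If $L\notin [H]^{\cO}$ then $\mM(G/L)=0$ by Definition~\ref{def:[H]-Mackey} and there is nothing to show; the interesting case is $L\in [H]^{\cO}$ with $L\subseteq H$.

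The computation proceeds by substituting the explicit formula
\[
    e_{[H]^{\cO}}=\sum_{K\to H}\frac{|K|}{|N_GH|}\mu^{\cO}(K,H)\,G/K
\]
of Lemma~\ref{lem:idempotentformula} into the double coset description of how each basis element $G/K$ acts on $\mM(G/L)$, and showing that all but one summand collapses. For $K\to H$ with $K\subsetneq H$, the subgroup $K$ is not above $[H]^{\cO}$: were some $K'\in [H]^{\cO}$ contained in $K$, minimality of $H_{\cO}(K')=H$ (Proposition~\ref{prop: Minimal Admissible for J}) would force $H\subseteq K$, contradicting $K\subsetneq H$. Since not being above $[H]^{\cO}$ is preserved by conjugation and by passage to subgroups, every intersection $L\cap \ell K\ell^{-1}$ appearing in $\res_L^G(G/K)$ lies outside $[H]^{\cO}$, so $\mM$ vanishes on $G/(L\cap \ell K\ell^{-1})$ and the whole $G/K$-summand acts as zero.

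Thus only the $K=H$ term survives and it remains to show that $\tfrac{|H|}{|N_GH|}\,G/H$ acts as the identity. Applying the double coset formula one has
\[
    \res_L^G(G/H)\cdot n=\sum_{\ell\in L\backslash G/H}\tr_{L\cap \ell H\ell^{-1}}^L\,\res_{L\cap \ell H\ell^{-1}}^L(n).
\]
A non-zero contribution forces $L\cap \ell H\ell^{-1}\in [H]^{\cO}$. Since this intersection sits inside both $L\subseteq H$ and $\ell H\ell^{-1}$, the minimality argument of Lemma~\ref{lem:Normalisers_for_Kin[H]} pins down $H_{\cO}(L\cap \ell H\ell^{-1})$ as being simultaneously $H$ and $\ell H\ell^{-1}$, forcing $\ell\in N_GH$. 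Conversely, for $\ell\in N_GH$ one has $L\cap \ell H\ell^{-1}=L$ and the summand acts as the identity on $\mM(G/L)$. Since $L\subseteq H$, the set of contributing cosets is in bijection with $N_GH/H$, so there are $|W_GH|$ of them and $G/H$ acts as $|W_GH|\cdot\operatorname{id}$.

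Combining the prefactor with the coset count gives $\tfrac{|H|}{|N_GH|}\cdot|W_GH|\cdot\operatorname{id}=\operatorname{id}$, as desired. The main technical step is the identification of the surviving double cosets in the previous display: it is precisely the rigidity of inseparability classes under conjugation and the minimality characterization of $H_{\cO}$ developed in Section~\ref{sec:transfersinsep} that force the sum to collapse to the single class arising from $N_GH$.
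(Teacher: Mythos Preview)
Your argument is correct and follows essentially the same route as the paper: reduce the action of \(e_{[H]^{\cO}}\) to that of \(\tfrac{|H|}{|N_GH|}\,G/H\) by observing that every other summand \(G/K\) (with \(K\to H\), \(K\subsetneq H\)) has \(K\) not above \([H]^{\cO}\), then analyse the double cosets for \(G/H\) and use Lemma~\ref{lem:Normalisers_for_Kin[H]} to see that only the \(|W_GH|\) cosets coming from \(N_GH\) contribute, each as the identity. The paper packages the first reduction inside the proof of Proposition~\ref{prop:moduleisinduction} and then carries out the same double--coset count.

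One small scope difference: the paper adopts the convention (Notation after Corollary~\ref{cor:indH is fully faithful}) of identifying \([H]^{\cO}\mhyphen\Mackey^G\) with its image under \(\IndHO\) inside \(\AboveHMackey[{[H]^{\cO}}]^G\), so it also treats levels \(L\geq [H]^{\cO}\) that are not subconjugate to \(H\); these are dispatched via Frobenius reciprocity and Corollary~\ref{cor:transfers_generate_value_Ind}. Your restriction to \(L\) subconjugate to \(H\) is fine provided you either regard \(\mM\) before this identification, or note that the action of \(e_{[H]^{\cO}}\) is an endomorphism of \(\IndHO\mM\) and hence, by full faithfulness of \(\IndHO\), is the identity as soon as its restriction to \([H]^{\cO}\) is. A one-line remark to this effect would make the proof self-contained under the paper's conventions.
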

\begin{proof} 
    We calculate \(e_{[H]^{\cO}}\mM(G/L)\), which is only non-zero when \(L\) is above \([H]^{\cO}\).
    By Proposition \ref{prop:moduleisinduction}, the idempotent \(e_{[H]^{\cO}}\)\index{e@\(e_{[H]^{\cO}}\)} acts 
    on \(\mM \in [H]^{\cO}\mhyphen\Mackey^G\) as 
    \[
    \frac{1}{W_{G}(H)} G/H.
    \]
    The restriction of \(G/H\) to 
    an \(L\)-set is a coproduct of terms \(L/L \cap gHg^{-1}\)
    for \(g \in L \backslash G / H \). The action of 
    \(L/L \cap gKg^{-1}\) on \(\IndHO \mM (G/L)\) is given by
    \[
    \IndHO \mM (G/L)
    \xrightarrow{\res_{L \cap {g}Hg^{-1}}^{L}}
    \IndHO \mM (G/L \cap {g}Hg^{-1})
    \xrightarrow{\tr_{L \cap {g}Hg^{-1}}^{L}}
    \IndHO \mM (G/L)    
    \]
    and hence is zero whenever \(L \cap {g}Hg^{-1}\) is not above \([H]^{\cO}\).

    Assume that \(L \in [H]^{\cO}\). 
    Without loss of generality, we may assume \(L \subseteq H\).
    Consider a term \(L \cap {g}Hg^{-1}\). Then
    \(L \cap {g}Hg^{-1}\) is inside \(H \cap {g}Hg^{-1}\). We divide into two sub-cases. 
    Firstly, \(H \neq {g}Hg^{-1}\) so that \(H \cap {g}Hg^{-1}\) is a proper subgroup of \(H\) that is separated
    from \(H\) by the Meet Lemma (Lemma \ref{lem: Meet Lemma}).
    Thus, \(L \cap {g}Hg^{-1}\) is not above \([H]^{\cO}\) and the term contributes nothing. 
    Secondly, \(H = {g}Hg^{-1}\) and \(L \cap {g}Hg^{-1}=L\). This gives \(\id=L/L\)
    acting on \(\IndHO \mM (G/L) \). The second sub-case occurs \(|N_G H|/|H|\) many times,
    hence \(e_{[H]^{\cO}}\) acts as the identity on \(\IndHO \mM (G/L)\).
    
    Now assume that \(L\) is above  \([H]^{\cO}\) but not inside it. 
    Corollary \ref{cor:transfers_generate_value_Ind}
    states that any element of 
    \(\IndHO \mM (G/L)\) is a sum of terms of the form \(\tr_J^L m\)
    for \(m \in \mM (G/J)\) and \(J \in [H]^{\cO}\). Using Frobenius reciprocity
    and the first case 
    \[
        (\res_L^G e_{[H]^{\cO}})( \tr_J^L (m)) = \tr_J^L ( \res_J^G e_{[H]^{\cO}}( m )) = \tr_J^L (m). \qedhere
    \]
\end{proof}

\begin{corollary}\label{cor: All IndH Modules Hit}
    The forgetful functor
    \[
        e_{[H]^{\cO}}\mA_{\Q}^{\cO}\mhyphen\Mod\to [H]^{\cO}\mhyphen\Mackey^G_{\bQ}
    \]
    is essentially surjective.
\end{corollary}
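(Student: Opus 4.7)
The plan is to produce for each $\mM\in [H]^{\cO}\mhyphen\Mackey^G_{\bQ}$ an explicit preimage under the forgetful functor, namely $\mN := \IndHO\mM$. First I would invoke Theorem~\ref{thm:res_ind_coind_exist} to place $\mN$ inside $\AboveHMackey[{[H]^{\cO}}]^G_{\bQ}$, so that it is automatically a rational $\cO$-Mackey functor, and hence an $\mA^{\cO}_{\bQ}$-module.

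The crucial step will be to upgrade this to an action of the idempotent localization $e_{[H]^{\cO}}\mA^{\cO}_{\bQ}$. Since the idempotents $\{e_{[K]^{\cO}}\}_{(K)\in\SubOG/G}$ are mutually orthogonal and sum to the unit of $\mA^{\cO}_{\bQ}(G/G)$, it suffices to show that $e_{[H]^{\cO}}$ acts as the identity on $\mN$. This is exactly the content of Lemma~\ref{lem:idempotentactionidentity}, whose proof is carried out on $\IndHO\mM$: using Proposition~\ref{prop:Ind} together with Corollary~\ref{cor:transfers_generate_value_Ind} and the Meet Lemma, every contribution of $\res^G_L(e_{[H]^{\cO}})$ coming from a conjugate $gHg^{-1}\neq H$ is shown to vanish. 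Orthogonality of the idempotents then forces every $e_{[K]^{\cO}}$ with $K\not\sim_{\cO} H$ to act as zero on $\mN$, so the $\mA^{\cO}_{\bQ}$-action factors through $e_{[H]^{\cO}}\mA^{\cO}_{\bQ}$.

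To close the argument, I would verify that the forgetful functor, applied to $\mN$, recovers $\mM$ up to isomorphism. Corollary~\ref{cor: Idempotents are Above H} forces any $e_{[H]^{\cO}}\mA^{\cO}_{\bQ}$-module to vanish off subgroups above $[H]^{\cO}$, while Corollary~\ref{cor: Relative Family} identifies the subgroups subconjugate to $H$ that are above $[H]^{\cO}$ as exactly the members of $[H]^{\cO}$. Together these show the forgetful functor is simply restriction to $\mathcal{F}(H)$, and then Corollary~\ref{cor:indH is fully faithful} yields the canonical isomorphism $\restrict{}{[H]^{\cO}}\IndHO\mM\cong \mM$, establishing essential surjectivity.

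I do not expect a real obstacle here, because the technical heart of the argument is already packaged in Lemma~\ref{lem:idempotentactionidentity}. The only subtlety worth double-checking is the precise identification of the forgetful functor; once it is recognized as restriction to $\mathcal{F}(H)$ (with the support condition on $[H]^{\cO}$ holding automatically by the combination of the two corollaries above), the remaining pieces assemble directly from the cited results.
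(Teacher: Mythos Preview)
Your proposal is correct and follows essentially the same approach as the paper: both rely on Lemma~\ref{lem:idempotentactionidentity} to show that \(e_{[H]^{\cO}}\) acts as the identity on \(\IndHO\mM\), which is the entire content of the corollary. The only difference is cosmetic: the paper adopts the convention (stated in the Notation following Corollary~\ref{cor:indH is fully faithful}) of identifying \([H]^{\cO}\mhyphen\Mackey^G\) with its image under \(\IndHO\), so that the ``forgetful functor'' is literally the identity on underlying Mackey functors and no separate verification of step~3 is needed; you instead unpack this identification explicitly via \(\restrict{}{[H]^{\cO}}\IndHO\mM\cong\mM\), which is equally valid and arguably clearer.
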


\begin{theorem}\label{thm:splitting2} 
There is an equivalence of categories
     \[
        \OMackey_{\Q}^G\cong \prod_{[H]^{\cO}} [H]^\cO\mhyphen\Mackey^G_{\Q}.
    \]\index{omackey@\(\OMackey^G_\Q\)}\index{Hmackey@\([H]^\cO\mhyphen\Mackey^G_\Q\)}
\end{theorem}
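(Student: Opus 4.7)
The plan is to chain together the three pieces of structure that have been assembled earlier, namely the idempotent decomposition of \(\mA^\cO_\bQ\), the fully-faithful embedding of each idempotent summand into an \([H]^\cO\)-Mackey category, and the essential surjectivity of that embedding. Concretely, I would start from the identification \(\OMackey_{\Q}^G\cong \mA^\cO_\bQ\dashMod\) and then invoke Corollary~\ref{cor:splitting O Mackey} to write
\[
\OMackey_{\Q}^G \;\simeq\; \prod_{(H)\in \SubOG/G} e_{[H]^{\cO}}\mA^\cO_\bQ\dashMod.
\]
By Corollary~\ref{cor:inseperability_representatives}, the indexing set \(\SubOG/G\) is in bijection with the set of inseparability classes \([H]^{\cO}\), so the product can be reindexed accordingly.

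Next, for each fixed inseparability class \([H]^{\cO}\), I would identify the factor \(e_{[H]^{\cO}}\mA^\cO_\bQ\dashMod\) with \([H]^\cO\mhyphen\Mackey^G_\Q\). Corollary~\ref{cor: eH-Mod FF embeds} provides a fully-faithful functor in one direction: any \(e_{[H]^{\cO}}\mA^\cO_\bQ\)-module \(\mM\) lies in \(\AboveHMackey[{[H]^{\cO}}]^G_\bQ\) by Corollary~\ref{cor: Idempotents are Above H}, and by Proposition~\ref{prop:moduleisinduction} it is canonically induced from its restriction, which is an \([H]^\cO\)-Mackey functor. Essential surjectivity is Corollary~\ref{cor: All IndH Modules Hit}: given an \([H]^\cO\)-Mackey functor \(\mN\), one forms \(\IndHO\mN\), which by Lemma~\ref{lem:idempotentactionidentity} carries an \(e_{[H]^{\cO}}\mA^\cO_\bQ\)-module structure whose underlying \([H]^\cO\)-Mackey functor is \(\mN\).

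Combining these gives the claimed equivalence
\[
\OMackey_{\Q}^G \;\simeq\; \prod_{[H]^{\cO}} [H]^\cO\mhyphen\Mackey^G_\Q.
\]
There is no serious obstacle here; all the hard work has been done in the preceding sections, and the argument is assembly rather than calculation. The most delicate point to check is that the two functors \(\restrict{}{[H]^{\cO}}\) and \(\IndHO\) implement mutually inverse equivalences on the nose between \(e_{[H]^{\cO}}\mA^\cO_\bQ\dashMod\) and \([H]^\cO\mhyphen\Mackey^G_\Q\); this amounts to remembering that \(\IndHO\) is fully-faithful (Corollary~\ref{cor:indH is fully faithful}) and that every object of the source category is in its essential image (Proposition~\ref{prop:moduleisinduction}). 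Once this is in place, the proof is just a one-line concatenation of the equivalences.
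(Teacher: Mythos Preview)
Your proposal is correct and follows essentially the same approach as the paper: invoke Corollary~\ref{cor:splitting O Mackey} for the idempotent splitting, then identify each factor with \([H]^\cO\mhyphen\Mackey^G_\Q\) via Corollaries~\ref{cor: eH-Mod FF embeds} and~\ref{cor: All IndH Modules Hit}. The paper's proof is simply a terser version of yours, citing the same three results without unpacking the auxiliary lemmas you mention.
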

\begin{proof} 
    Corollary~\ref{cor:splitting O Mackey} shows that we have an equivalence of categories
    \[
        \OMackey_{\Q}^G\simeq\prod_{[H]^{\cO}} e_{[H]^{\cO}}\mA_{\Q}^{\cO}\mhyphen\Mod.
    \]
    Corollaries~\ref{cor: eH-Mod FF embeds} and \ref{cor: All IndH Modules Hit} complete the proof.
\end{proof}

\subsection{Simplifying the splitting}
Greenlees and May's splitting has an additional simplification: there is no need to remember conjugate subgroups, since the Mackey functor will have isomorphic values on orbits for conjugate subgroups. Thus for a maximal transfer system, \(e_{(H)}\mM\) is generated from a single value \(e_{(H)}\mM(G/H)\) together with the Weyl group action on it. In this subsection we describe a similar simplification for a general transfer system \(\cO\).

\begin{definition}
    Let \(\Orb^{G}_{[H]^{\cO}}\)\index{orb@\(\Orb^{G}_{[H]^{\cO}}\)} be the full subcategory of the orbit category of \(G\) spanned by those orbits \(G/J\) with \(J\in [H]^{\cO}\).
\end{definition}

When \(\cO\) is the terminal transfer system, then the only subgroups inseparable from \(H\) are the conjugates of \(H\), and so this is the full subcategory of the orbit category spanned by those orbits. Since every map \(G/H\to G/H'\) with \(H\) and \(H'\) conjugate is an isomorphism, this is a groupoid, and it is equivalent to the group \(W_G(H)\). We have a similar story here.

\begin{definition}
    Let \(H \to G \in \cO\) and \(N=N_G(H)\). Let 
    \[
        \Orb^N_{\langle H\rangle^{\cO}}=\langle N/J\mid J\in [H]^{\cO}\text{ and }J\subseteq H\rangle
    \]\index{orb@\(\Orb^N_{\langle H\rangle^{\cO}}\)}
    be the full subcategory of the orbit category of \(N\) spanned by those orbits \(N/J\) with \(J\in\Sub_{\langle H\rangle}^{\cO}\)\index{sub@\(\Sub_{\langle H\rangle}^{\cO}\)}.
\end{definition}

\begin{proposition}\label{prop: tombstone N simplification}
    Let \(H \to G \in \cO\) and \(N=N_G(H)\). Induction gives an equivalence of categories
    \[
        G\timesover{N}(\mhyphen)\colon \Orb^{N}_{\langle H\rangle^{\cO}}\to\Orb^{G}_{[H]^{\cO}}.
    \]
\end{proposition}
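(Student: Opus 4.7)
The plan is to verify that $G\times_N(-)$ is essentially surjective and fully faithful, drawing on the structural results for inseparability classes established in Section~\ref{sec:transfersinsep}. Throughout, I will use the standard identification $G\times_N(N/J)\cong G/J$ of $G$-sets for $J\subseteq N$, so that the induction functor takes $N/J$ to $G/J$ on objects.

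Essential surjectivity is immediate from Proposition~\ref{prop: tombstones}: any object $G/K$ with $K\in[H]^\cO$ satisfies $K=gJg^{-1}$ for some $g\in G$ and some $J\in\Sub_{\langle H\rangle}^{\cO}$, and conjugation then yields a $G$-isomorphism $G/K\cong G/J$. This places $G/K$ in the essential image of induction on $N/J$.

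For full faithfulness I fix $J,J'\in\Sub_{\langle H\rangle}^{\cO}$ and use the standard bijections
\[
\Hom_{\Orb^N}(N/J,N/J')\;\cong\;\{\,nJ'\in N/J'\mid n^{-1}Jn\subseteq J'\,\},
\]
\[
\Hom_{\Orb^G}(G/J,G/J')\;\cong\;\{\,aJ'\in G/J'\mid a^{-1}Ja\subseteq J'\,\},
\]
with induction sending a coset $nJ'\in N/J'$ to its image in $G/J'$. Lemma~\ref{lem:Normalisers_for_Kin[H]} gives $J'\subseteq N_G(J')\subseteq N_G(H)=N$, so if two elements of $N$ agree modulo $J'$ in $G$, they already agree modulo $J'$ in $N$; this yields injectivity on Hom-sets.

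The main step, and the place where the inseparability hypothesis does real work, is surjectivity on Hom-sets. Given $a\in G$ with $a^{-1}Ja\subseteq J'\subseteq H$, I will apply Lemma~\ref{lem:Normalisers_for_Kin[H]} to $K=J$ (which lies in $[H]^\cO$ and is contained in $H$) with group element $g=a^{-1}$: the inclusion $gJg^{-1}=a^{-1}Ja\subseteq H$ forces $g\in N_G(H)=N$, hence $a\in N$. Thus the coset $aJ'\in G/J'$ is already in the image of $N/J'$, completing the verification of full faithfulness and hence the equivalence.
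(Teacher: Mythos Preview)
Your proof is correct and follows essentially the same approach as the paper's: essential surjectivity from Proposition~\ref{prop: tombstones}, and fullness from the fact that any $a\in G$ conjugating an element of $\Sub_{\langle H\rangle}^{\cO}$ into $H$ must lie in $N_G(H)$. The only cosmetic difference is that you invoke Lemma~\ref{lem:Normalisers_for_Kin[H]} for this step while the paper cites Proposition~\ref{prop: J not in any distinct conjugate of H}, and you spell out faithfulness explicitly where the paper simply notes that induction is always faithful.
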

\begin{proof}
    Proposition~\ref{prop: tombstones} shows that induction is essentially surjective. Induction is always faithful, so we need only show fullness. For this, we have to compute, for \(J,K\in [H]^{\cO}\) and \(J,K\subseteq H\), the set of maps
    \[
        \Map^G(G/K,G/J)\cong \{gJ\mid K\subset gJg^{-1}\}.
    \]
    Now since \(J\subseteq H\), we have \(gJg^{-1}\subseteq gHg^{-1}\). If \(K\subseteq gJg^{-1}\), then it is in both \(H\) and \(gHg^{-1}\), so being inseparable from \(H\) implies that \(H=gHg^{-1}\), and hence \(g\in N\), by Proposition~\ref{prop: J not in any distinct conjugate of H}.
\end{proof}

\begin{remark}
    Unlike in the complete case, we cannot in general descend down to Weyl-equivariance. This is because there is no reason to think that some element \(h\in H\) normalizes all of the subgroups \(K\) inseparable from \(H\). The example of the initial transfer systems, where \(Orb^{G}_{\langle G\rangle^{\cO}}=\Orb^G\), for \(G\) non-abelian shows quite nicely how badly things can fail.
\end{remark}

If there are no transfers connecting subgroups in some particular inseparability class \([H]^\cO\), then this completely defines the category of \([H]^\cO\)-Mackey functors, see Remark \ref{rem:structure_of_(H)-Mackey}. 

\begin{definition}
    Let \(\Coeff_{\langle H\rangle^{\cO}}^{N}\)\index{coeff@\(\Coeff_{\langle H\rangle^{\cO}}^{N}\)} denote the category of contravariant functors
    \[
        \Orb^{N}_{\langle H\rangle^{\cO}}\to\Ab.
    \]
\end{definition}

\begin{corollary}\label{cor: simplified-[H]Mackey for disklike}
    Suppose \(\cO\) is a transfer system, such that there are no transfers in \([H]^\cO\)\index{H@\([H]^\cO\)} and let \(N=N_G(H)\). Then we have an equivalence of categories
    \[
        [H]^\cO\mhyphen\Mackey^G\simeq \Coeff_{\langle H\rangle^{\cO}}^{N}.
    \]\index{Hmackey@\([H]^\cO\mhyphen\Mackey^G\)}
\end{corollary}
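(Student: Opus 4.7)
The plan is to identify the category of $[H]^\cO$-Mackey functors with the category of contravariant functors from $\Orb^G_{[H]^\cO}$ to $\Ab$, and then apply Proposition \ref{prop: tombstone N simplification} to transfer this to $\Coeff_{\langle H\rangle^\cO}^N$.

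First I would analyze morphisms in $\cA^\cO_{(H)}$. A morphism $G/K \to G/L$ is a span $G/K \xleftarrow{f_R} G/J \xrightarrow{f_T} G/L$ with $f_T \in \Fin^G_\cO$, and evaluated on an $[H]^\cO$-Mackey functor $\mM$ it factors through $\mM(G/J)$. If $J \notin [H]^\cO$ this factor is zero, so the only contributing spans have $J, K, L \in [H]^\cO$. For such a span, $f_T$ corresponds to an inclusion $J \subseteq gLg^{-1}$ realised in $\cO$, with both subgroups in the same inseparability class (by conjugation invariance of $[H]^\cO$). The no-transfers hypothesis forces $J = gLg^{-1}$, so $f_T$ is an isomorphism and the span is equivalent to the restriction span $G/K \xleftarrow{f_R \circ f_T^{-1}} G/L \xrightarrow{=} G/L$, which corresponds to a $G$-map $G/L \to G/K$, i.e.\ a morphism in $\Orb^G_{[H]^\cO}$.

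Second, I would assemble this into an equivalence $[H]^\cO\mhyphen\Mackey^G \simeq \Fun\bigl((\Orb^G_{[H]^\cO})^\op, \Ab\bigr)$. The inclusion $(\Orb^G_{[H]^\cO})^\op \hookrightarrow \cA^\cO_{(H)}$ given by identity-transfer spans induces a restriction functor from $[H]^\cO$-Mackey functors to presheaves on $\Orb^G_{[H]^\cO}$. Essential surjectivity comes from extending any such presheaf by zero outside $[H]^\cO$: the first step ensures that functoriality over all of $\cA^\cO_{(H)}$ is automatic, since every span with middle outside $[H]^\cO$ acts as zero, and every span with middle inside $[H]^\cO$ agrees with a restriction-type span already handled by the presheaf. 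Fully-faithfulness follows because a natural transformation between functors vanishing outside $[H]^\cO$ is determined by its values on $G/K$ for $K \in [H]^\cO$, and naturality is automatic for any span that acts as zero on both sides.

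Finally, Proposition \ref{prop: tombstone N simplification} supplies an equivalence of orbit categories $\Orb^N_{\langle H\rangle^\cO} \simeq \Orb^G_{[H]^\cO}$, and precomposing presheaves along this equivalence yields $[H]^\cO\mhyphen\Mackey^G \simeq \Coeff_{\langle H\rangle^\cO}^N$. The main technical point I expect is the careful verification that the ``span-to-orbit-morphism'' identification is compatible with composition; this amounts to checking that the pullback of two identity-transfer spans either factors as an identity-transfer span or has middle term outside $[H]^\cO$, which follows from the same no-transfers hypothesis applied componentwise after decomposing the pullback into orbits.
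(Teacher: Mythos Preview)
Your proposal is correct and follows essentially the same route as the paper, which states the corollary as an immediate consequence of Remark~\ref{rem:structure_of_(H)-Mackey} together with Proposition~\ref{prop: tombstone N simplification} without spelling out the span analysis; you have supplied exactly the details the paper leaves implicit. One small point worth tightening: the middle term of a span need not be a single orbit, so the reduction ``$f_T$ is an isomorphism'' should be applied orbit-by-orbit after decomposing, as you indicate only at the end.
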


In other words, it is enough to remember the values of the \([H]^{\cO}\)-Mackey functor only on orbits \(G/K\) where \(K\subseteq H\) and \(K\in [H]^{\cO}\) together with the restrictions and \(N_G(H)\) action on this diagram. The other values of an \([H]^{\cO}\)-Mackey functor can be recovered from these using conjugation by elements of \(G\). 

The result above applies in particular to all inseparability classes when \(\cO\) is a disk-like transfer system. When we work rationally, combining Theorem \ref{thm:splitting2} and Corollary \ref{cor: simplified-[H]Mackey for disklike}, we obtain the following.

\begin{theorem}\label{thm:splitting2simplified_disklike} Let \(\cO\) be a disk-like  transfer system. There is an equivalence of categories
     \[
        \OMackey^G_\Q\cong \prod_{[H]^{\cO}} 
        \Coeff_{\langle H\rangle^{\cO},{\Q}}^{N_G(H)}.
    \]\index{omackey@\(\OMackey^G_\Q\)}\index{coeff@\(\Coeff_{\langle H\rangle^{\cO}}^{N}\)}
\end{theorem}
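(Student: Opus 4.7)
The plan is to assemble this result directly from the theorems and corollaries already established earlier in the paper, so the proof is essentially a matter of verifying that the hypotheses of the intermediate statements are met in the disk-like setting and then chaining the equivalences together.

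First I would invoke Theorem~\ref{thm:splitting2}, which, once we pass to rational coefficients, gives the product decomposition
\[
\OMackey_{\Q}^G \cong \prod_{[H]^{\cO}} [H]^\cO\mhyphen\Mackey^G_{\Q},
\]
indexed over inseparability classes. This step is valid for any transfer system and does not use the disk-like hypothesis.

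Next, I would apply Corollary~\ref{cor: simplified-[H]Mackey for disklike} to each factor. To do so, one needs to verify the hypothesis that there are no transfers within the inseparability class \([H]^\cO\). This is exactly the content of Proposition~\ref{prop:no_transfers_within_class_for_disklike}, which asserts that for a disk-like transfer system, if \(K,L\in[H]^{\cO}\) with \(K\subsetneq L\), then no transfer \(K\to L\) exists in \(\cO\). Granted this, Corollary~\ref{cor: simplified-[H]Mackey for disklike} identifies
\[
[H]^\cO\mhyphen\Mackey^G_\Q \simeq \Coeff_{\langle H\rangle^{\cO},\Q}^{N_G(H)}
\]
for each representative \(H\) with \(H\to G\) in \(\cO\). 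Here I would note, by appealing to Corollary~\ref{cor:inseperability_representatives}, that every inseparability class admits such a maximal representative, so the indexing set on the right-hand side is unambiguous.

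Combining these two equivalences termwise in the product yields the desired equivalence. The only step that requires any genuine content is the check that Proposition~\ref{prop:no_transfers_within_class_for_disklike} applies, and I do not anticipate any obstacle there since it is stated precisely for disk-like transfer systems. The remainder is bookkeeping: ensuring that the product ranges over the same set on both sides (conjugacy classes of subgroups in \(\SubOG\), equivalently inseparability classes) and that the equivalences assemble into a product equivalence, which is automatic because the splitting in Theorem~\ref{thm:splitting2} is a product of categories.
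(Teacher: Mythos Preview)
Your proposal is correct and follows essentially the same approach as the paper: the paper simply states that the theorem is obtained by combining Theorem~\ref{thm:splitting2} with Corollary~\ref{cor: simplified-[H]Mackey for disklike}, having just noted that the latter applies to every inseparability class in the disk-like case (implicitly via Proposition~\ref{prop:no_transfers_within_class_for_disklike}). Your write-up makes the invocation of Proposition~\ref{prop:no_transfers_within_class_for_disklike} and the indexing via Corollary~\ref{cor:inseperability_representatives} explicit, but the argument is the same.
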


Notice that for the maximal transfer system, which is disk-like, Theorem  \ref{thm:splitting2simplified_disklike} gives precisely the Greenlees--May splitting.

When there are internal transfers in \([H]^\cO\), we can still connect \([H]^\cO\)-Mackey functors to \(N\)-Mackey functors, where \(N=N_G(H)\). Consider the full subcategory of the \(i^*_N\cO\)-Lindner category for \(N\) spanned by those objects \(N/K\) with \(K\subseteq H\): \(\cA^{N,\cO}_{(H)}\). Product preserving functors out of this are \(i^*_N\cO\)-Mackey functors for \(N\) on the family of \(H\) and its subgroups (since \(H\) is normal in \(N\)). 

\begin{definition}
    Let \(\langle H\rangle^{\cO}\mhyphen\Mackey^N\)\index{Hmackey@\(\langle H\rangle^{\cO}\mhyphen\Mackey^N\)} denote the full subcategory of \(i^*_N\cO\)-Mackey functors \(\mM\), for \(N\) on \(H\) and its subgroups, with the property that \(\mM(N/J)=0\) if \(J\not\in [H]^{\cO}\).
\end{definition}

Restriction from \(\cO\)-Mackey functors for \(G\) to \(i^*_N\cO\)-Mackey functors for \(N\) then gives the following.
\begin{proposition}\label{prop: <H>MackeyN}
    Let \(H\to G \in \cO\). Restriction gives an equivalence of categories
    \[
        [H]^\cO\mhyphen\Mackey^G\to\langle H\rangle^{\cO}\mhyphen\Mackey^{N}.
    \]
\end{proposition}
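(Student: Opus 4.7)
The plan is to reduce the equivalence to a statement at the level of Lindner categories, where the orbit-level equivalence of Proposition \ref{prop: tombstone N simplification} can be promoted to an equivalence on spans. First, the restriction functor is well-defined: for $\mM$ an $[H]^\cO$-Mackey functor for $G$, one has $\mM(G/K) = (\mM|_N)(N/K)$ for $K \subseteq H$, and the vanishing condition outside $[H]^\cO$ is preserved. To construct an inverse, I would present both sides via Lindner-style subcategories. The category $[H]^\cO\mhyphen\Mackey^G$ is equivalent to the category of additive functors to $\Ab$ out of $\cA^\cO_{[H]^\cO}$, the full subcategory of $\cA^\cO_{(H)}$ spanned by orbits $G/K$ with $K \in [H]^\cO$, since the vanishing condition means compositions factoring through zero orbits are themselves zero. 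Analogously, $\langle H\rangle^\cO\mhyphen\Mackey^N$ is equivalent to additive functors out of $\cA^{N,i_N^*\cO}_{\langle H\rangle^\cO}$, the full subcategory of the $N$-Lindner category on orbits $N/K$ with $K \subseteq H$ and $K \in [H]^\cO$. The claim then reduces to showing that the induction-on-objects functor
\[
\Phi \colon \cA^{N,i_N^*\cO}_{\langle H\rangle^\cO} \to \cA^\cO_{[H]^\cO}, \qquad N/K \mapsto G/K,
\]
is an equivalence of pre-additive categories, since precomposition with $\Phi$ realises the restriction functor.

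The functor $\Phi$ is essentially surjective by Proposition \ref{prop: tombstones}, so the heart of the argument is full faithfulness on spans. Fix $K, J \subseteq H$ with $K, J \in [H]^\cO$ and consider an isomorphism class of spans $G/K \leftarrow G/L \xrightarrow{h} G/J$ with $h$ in $\cO$. The value on $G/L$ vanishes in $[H]^\cO\mhyphen\Mackey^G$ unless $L \in [H]^\cO$, so such spans can be discarded, and I may assume $L \in [H]^\cO$; then, conjugating the middle term, I may further assume $L \subseteq H$. The two legs correspond to cosets $g_1 K$ and $g_2 J$ with $L \subseteq g_1 K g_1^{-1}$ and $L \subseteq g_2 J g_2^{-1}$. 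Since $L \subseteq H$ and $L \in [H]^\cO$ force $H_\cO(L) = H$, applying Proposition \ref{prop: J not in any distinct conjugate of H} to the inclusions $L \subseteq g_i H g_i^{-1}$ yields $g_1, g_2 \in N$. Lemma \ref{lem:Normalisers_for_Kin[H]} then gives $N_G(L) = N_N(L)$, so the middle-object automorphism groups coincide, and the transfer-leg condition $\cO$ versus $i_N^*\cO$ agrees on morphisms between subgroups of $H$. This produces the desired bijection on span isomorphism classes.

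The main obstacle is the span-level full faithfulness in the previous paragraph, where the rigidity of inseparability classes developed in Section \ref{sec:transfersinsep} does the essential work; once $\Phi$ is established as an equivalence, the claimed equivalence of Mackey functor categories follows formally from the universal property of additive functors into $\Ab$.
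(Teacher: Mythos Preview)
Your approach is essentially the paper's one-line proof (which simply invokes Proposition~\ref{prop: tombstone N simplification}) with the details filled in: you promote the orbit-category equivalence to an equivalence of Lindner-type span categories and read off the Mackey-functor equivalence. One point to tighten: the category \([H]^{\cO}\mhyphen\Mackey^G\) is equivalent to additive functors not on the full subcategory \(\cA^{\cO}_{[H]^{\cO}}\) itself but on its quotient by the ideal of spans whose middle orbit lies outside \([H]^{\cO}\), since an arbitrary additive functor on the full subcategory need not annihilate such spans; your later remark that these spans ``can be discarded'' shows you are in effect already working in that quotient, and your argument for \(\Phi\) goes through there without change.
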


\begin{proof}
    This follows from the equivalence of categories in Proposition \ref{prop: tombstone N simplification}.
\end{proof}

When we work rationally, combining Theorem \ref{thm:splitting2} and Proposition \ref{prop: <H>MackeyN}, we obtain the following.

\begin{theorem}\label{thm: simplified-noconjugation}
    Let \(\cO\) be a transfer system for \(G\). There is an equivalence of categories
     \[
        \OMackey^G_\Q\cong \prod_{[H]^{\cO}} \langle H\rangle^{\cO}\mhyphen\Mackey^{N_G(H)}_{\Q}.
    \]\index{Hmackey@\(\langle H\rangle^{\cO}\mhyphen\Mackey^N\)}
\end{theorem}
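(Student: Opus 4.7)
The plan is to deduce this directly by combining the two preceding results in sequence, since the theorem is essentially a packaging statement. First, I would invoke Theorem~\ref{thm:splitting2}, which provides the rational splitting
\[
\OMackey_{\Q}^G \cong \prod_{[H]^{\cO}} [H]^{\cO}\mhyphen\Mackey^G_{\Q},
\]
where the product runs over inseparability classes for \(\cO\) (equivalently, by Corollary~\ref{cor:inseperability_representatives}, over conjugacy classes of subgroups \(H\) with \(H\to G\) in \(\cO\), represented by a maximal element). This is the main splitting result of the paper, whose proof combines the idempotent decomposition of the rational \(\cO\)-Burnside ring with the identification of each idempotent summand \(e_{[H]^{\cO}}\mA_{\Q}^{\cO}\mhyphen\Mod\) with \([H]^{\cO}\mhyphen\Mackey^G_{\Q}\) via induction.

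Next, I would apply Proposition~\ref{prop: <H>MackeyN} to each factor on the right-hand side. That proposition, whose proof rests on the \(G\)-equivariant decomposition \([H]^{\cO}\cong G\times_{N_G(H)}\Sub^{\cO}_{\langle H\rangle}\) from Proposition~\ref{prop: tombstones}, produces an equivalence
\[
[H]^{\cO}\mhyphen\Mackey^G \;\xrightarrow{\;\simeq\;}\; \langle H\rangle^{\cO}\mhyphen\Mackey^{N_G(H)}
\]
by restriction. Since this equivalence is additive and \(\mathbb{Z}\)-linear, it passes to the rational subcategories without change, giving
\[
[H]^{\cO}\mhyphen\Mackey^G_{\Q} \;\simeq\; \langle H\rangle^{\cO}\mhyphen\Mackey^{N_G(H)}_{\Q}.
\]

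Finally, I would assemble the product: since equivalences of categories are closed under products indexed by any (small) set, taking the product of the above equivalences over the set of inseparability classes \([H]^{\cO}\) and composing with the splitting of Theorem~\ref{thm:splitting2} yields the stated equivalence. There is no real obstacle here; the work was already done in the earlier theorem (handling the idempotent splitting and the ambidexterity needed to identify \(e_{[H]^{\cO}}\mA_{\Q}^{\cO}\mhyphen\Mod\) with \([H]^{\cO}\mhyphen\Mackey^G_{\Q}\)) and in Proposition~\ref{prop: <H>MackeyN} (handling the removal of redundant conjugation data). The only point to verify carefully is that the choice of representative \(H\) for each class \([H]^{\cO}\) is harmless: by Corollary~\ref{cor:inseperability_representatives}, every inseparability class has a maximal element (unique up to \(G\)-conjugacy) of the form required to define both \(N_G(H)\) and \(\langle H\rangle^{\cO}\mhyphen\Mackey^{N_G(H)}_{\Q}\), and \(G\)-conjugate choices give canonically isomorphic Mackey categories.
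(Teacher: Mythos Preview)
Your proposal is correct and follows exactly the same route as the paper: the paper's proof consists of a single sentence stating that the result follows by combining Theorem~\ref{thm:splitting2} with Proposition~\ref{prop: <H>MackeyN}, which is precisely what you do (with a bit more care about representatives and rationalization).
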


\section{Examples}\label{sec:examples}

In this section we illustrate our results on several simple examples. We begin with the splitting for the incomplete Burnside ring Mackey functors and corresponding splittings of general incomplete Mackey functors. Recall the incomplete Burnside ring from Example \ref{ex:c_6_Burnside}.

\begin{ex}
 Suppose \(G=C_6\) and the transfer system is given by the identity maps, \(C_1 \to C_3\) and \(C_2 \to C_6\).
 The rational \(C_6\)-Burnside ring for \(\cO\) is the following incomplete \(\cO\)-Mackey functor, where in the brackets we indicate the additive basis for each \(\bQ\)-module.

\[
\xymatrix@R+0.5cm@C+0.5cm{
& \bQ[C_6/C_6, C_6/C_2]
\ar@(ul,ur)^{\trivgp}
\ar@/^1.pc/[dr]|{R_{C_2}^{C_6}}
\ar@/_1.pc/[dl]|{R_{C_3}^{C_6}}
\\
\bQ[C_3/C_3,C_3/C_1]
\ar@(u,l)_{C_6/C_3}
\ar@/_1.pc/[dr]|{R_{C_1}^{C_3}}
&&
\bQ[C_2/C_2]
\ar@(u,r)^{C_6/C_2}
\ar@/^1.pc/[dl]|{R_{C_1}^{C_2}}
\ar@/^1.pc/[ul]|{T_{C_2}^{C_6}}
\\
&
\bQ[C_1/C_1]
\ar@(dr,dl)^{C_6}
\ar@/_1.pc/[ul]|{T_{C_1}^{C_3}}
}
\]

This incomplete Mackey functor splits into a direct sum of two diagrams of the following form:

\[
\xymatrix@R+0.5cm@C-0.2cm{
& \bQ[\frac{1}{3}C_6/C_2]
\ar@(ul,ur)^{\trivgp}
\ar@/^1.pc/[dr]|{R_{C_2}^{C_6}}
\ar@/_1.pc/[dl]|{R_{C_3}^{C_6}}
\\
\bQ[\frac{1}{3}C_3/C_1]
\ar@(u,l)_{C_6/C_3}
\ar@/_1.pc/[dr]|{R_{C_1}^{C_3}}
&&
\bQ[C_2/C_2]
\ar@(u,r)^{C_6/C_2}
\ar@/^1.pc/[dl]|{R_{C_1}^{C_2}}
\ar@/^1.pc/[ul]|{T_{C_2}^{C_6}}
\\
&
\bQ[C_1/C_1]
\ar@(dr,dl)^{C_6}
\ar@/_1.pc/[ul]|{T_{C_1}^{C_3}}
}
{\bigoplus}
\xymatrix@R+0.5cm@C-0.9cm{
& \bQ[C_6/C_6-\frac{1}{3}C_6/C_2]
\ar@(ul,ur)^{\trivgp}
\ar@/_1.pc/[dl]|{R_{C_3}^{C_6}}
\ar@/^1.pc/[dr]|{0}
\\
\bQ[C_3/C_3-\frac{1}{3}C_3/C_1]
\ar@(u,l)_{C_6/C_3}
\ar@/_1.pc/[dr]|{0}
&&
0
\ar@(u,r)^{0}
\ar@/^1.pc/[dl]|{0}
\ar@/^1.pc/[ul]|{0}
\\
&
0
\ar@(dr,dl)^{0}
\ar@/_1.pc/[ul]|{0}
}
\]

In both diagrams all named maps are isomorphisms. 
The left diagram corresponds to idempotent \(e_{[2]}=e_1+e_2\) and the right one to the idempotent \(e_{[6]}=e_3+e_6\). We used the shortened notation \(e_n\) for \(e_{C_n}\) and we expressed the idempotents on the left in terms of idempotents of the complete rational Burnside ring for \(C_6\), see 
Corollary \ref{cor:idempotentformula}.

Continuing this example, a general  \(\cO\)-Mackey functor \(M\) for \(C_6\) is of the following form.

\[
\xymatrix@R+0.5cm@C+0.5cm{
& M(C_6/C_6)
\ar@(ul,ur)^{\trivgp}
\ar@/^1.pc/[dr]|{R_{C_2}^{C_6}}
\ar@/_1.pc/[dl]|{R_{C_3}^{C_6}}
\\
M(C_6/C_3)
\ar@(u,l)_{C_6/C_3}
\ar@/_1.pc/[dr]|{R_{C_1}^{C_3}}
&&
M(C_6/C_2)
\ar@(u,r)^{C_6/C_2}
\ar@/^1.pc/[dl]|{R_{C_1}^{C_2}}
\ar@/^1.pc/[ul]|{T_{C_2}^{C_6}}
\\
&
M(C_6/C_1)
\ar@(dr,dl)^{C_6}
\ar@/_1.pc/[ul]|{T_{C_1}^{C_3}}
}
\]

The splitting of a general \(\cO\)-Mackey functor \(M\) for \(C_6\) is analogous to the one of the \(C_6\)-Burnside ring for \(\cO\), however the coloured maps below do not have to be isomorphisms. 

\[
\xymatrix@R+0.cm@C+0.cm{
& \widetilde{M}(C_6/C_6)
\ar@(ul,ur)^{\trivgp}
\ar@/^1.pc/[dr]|{R_{C_2}^{C_6}}
\ar@[ForestGreen]@/_1.pc/[dl]|{\green{R_{C_3}^{C_6}}}
\\
\widetilde{M}(C_6/C_3)
\ar@(u,l)_{C_6/C_3}
\ar@/_1.pc/[dr]|{R_{C_1}^{C_3}}
&&
M(C_6/C_2)
\ar@(u,r)^{C_6/C_2}
\ar@[blue]@/^1.pc/[dl]|{\blue{R_{C_1}^{C_2}}}
\ar@/^1.pc/[ul]|{T_{C_2}^{C_6}}
\\
&
M(C_6/C_1)
\ar@(dr,dl)^{C_6}
\ar@/_1.pc/[ul]|{T_{C_1}^{C_3}}
}
{\bigoplus}
\xymatrix@R+0cm@C+0cm{
& \hat{M}(C_6/C_6)
\ar@(ul,ur)^{\trivgp}
\ar@[blue]@/_1.pc/[dl]|{\blue{R_{C_3}^{C_6}}}
\ar@/^1.pc/[dr]|{0}
\\
\hat{M}(C_6/C_3)
\ar@(u,l)_{C_6/C_3}
\ar@/_1.pc/[dr]|{0}
&&
0
\ar@(u,r)^{0}
\ar@/^1.pc/[dl]|{0}
\ar@/^1.pc/[ul]|{0}
\\
&
0
\ar@(dr,dl)^{0}
\ar@/_1.pc/[ul]|{0}
}
\]

Here \(M(C_6/C_6)= \widetilde{M}(C_6/C_6) \oplus \hat{M}(C_6/C_6)\) and similarly 
\(M(C_6/C_3)= \widetilde{M}(C_6/C_3) \oplus \hat{M}(C_6/C_3)\).

Notice that the green map is completely determined by the blue one in the same diagram. Therefore after splitting, the data we have to remember to completely recover an \(\cO\)-Mackey functor \(M\) for \(C_6\) is illustrated in the following diagram.

\[
\xymatrix@R+0cm@C+0cm{
& \hat{M}(C_6/C_6)
\ar@(ul,ur)^{\trivgp}
\ar@[blue]@/_1.pc/[dl]|{\blue{R_{C_3}^{C_6}}}
\\
\hat{M}(C_6/C_3)
\ar@(u,l)_{C_6/C_3}
&& 
M(C_6/C_2)
\ar@(u,r)^{C_6/C_2}
\ar@[blue]@/^1.pc/[dl]|{\blue{R_{C_1}^{C_2}}}
\\
&
M(C_6/C_1)
\ar@(dr,dl)^{C_6}
}
\]
\end{ex}

We consider another example of the splitting of the incomplete Burnside ring and the corresponding splitting of the category of incomplete Mackey functors.

\begin{ex}
    Let \(G=C_8\) and take the transfer system consisting of identity maps, \(C_2 \to C_4\) and \(C_2 \to C_8\).
    The rational \(C_8\)-Burnside ring for \(\cO\) is the following incomplete Mackey functor, which splits. In the bracket we indicate the additive basis for each \(\bQ\)-module.

\[
\xymatrix@R+0.5cm@C+0.5cm{
\bQ[C_8/C_8,C_8/C_2]
\ar@(ul,ur)^{\trivgp}
\ar@/_0.pc/[d]|{R_{C_4}^{C_8}}
\\
\bQ[C_4/C_4,C_4/C_2]
\ar@(u,r)^{C_8/C_4}
\ar@/_0.pc/[d]|{R_{C_2}^{C_4}}
\\
\bQ[C_2/C_2]
\ar@(u,r)^{C_8/C_2}
\ar@/_0.pc/[d]|{R_{C_1}^{C_2}}
\ar@/^1.pc/[u]^{T_{C_2}^{C_4}}
\ar@/^3.5pc/[uu]^{T_{C_2}^{C_8}}
\\
\bQ[C_1/C_1]
\ar@(dr,dl)^{C_8}
}
\quad = \quad
\xymatrix@R+0.5cm@C+0.5cm{
\bQ[\frac{1}{4}C_8/C_2]
\ar@(ul,ur)^{\trivgp}
\ar@/_0.pc/[d]|{R_{C_4}^{C_8}}
\\
\bQ[\frac{1}{2}C_4/C_2]
\ar@(u,r)^{C_8/C_4}
\ar@/_0.pc/[d]|{R_{C_2}^{C_4}}
\\
\bQ[C_2/C_2]
\ar@(u,r)^{C_8/C_2}
\ar@/_0.pc/[d]|{R_{C_1}^{C_2}}
\ar@/^1.pc/[u]^{T_{C_2}^{C_4}}
\ar@/^3.5pc/[uu]^{T_{C_2}^{C_8}}
\\
\bQ[C_1/C_1]
\ar@(dr,dl)^{C_8}
}
\quad \oplus \quad
\xymatrix@R+0.5cm@C+0.5cm{
\bQ[C_8/C_8-\frac{1}{4}C_8/C_2]
\ar@(ul,ur)^{\trivgp}
\ar@/_0.pc/[d]|{R_{C_4}^{C_8}}
\\
\bQ[C_4/C_4-\frac{1}{2}C_4/C_2]
\ar@(u,r)^{C_8/C_4}
\ar@/_0.pc/[d]|{0}
\\
0
\ar@/_0.pc/[d]|{0}
\\
0
}
\]

This corresponds to the idempotent splitting \(1=e_{[2]}+ e_{[8]}\), where \(e_{[2]}=e_1+e_2,  e_{[8]}=e_4 +e_8\) in terms of idempotents of the complete rational Burnside ring for \(C_8\). 

The splitting of a general \(\cO\)-Mackey functor for \(C_8\) is analogous. Below we indicate in blue the data that determines the whole left hand side. 

\[
\xymatrix@R+0.5cm@C+0.5cm{
M(C_8/C_8)
\ar@(ul,ur)^{\trivgp}
\ar@/_0.pc/[d]|{R_{C_4}^{C_8}}
\\
M(C_8/C_4)
\ar@(u,r)^{C_8/C_4}
\ar@/_0.pc/[d]|{R_{C_2}^{C_4}}
\\
M(C_8/C_2)
\ar@(u,r)^{C_8/C_2}
\ar@/_0.pc/[d]|{R_{C_1}^{C_2}}
\ar@/^1.pc/[u]^{T_{C_2}^{C_4}}
\ar@/^3.5pc/[uu]^{T_{C_2}^{C_8}}
\\
M(C_8/C_1)
\ar@(dr,dl)^{C_8}
}
\quad = \quad
\xymatrix@R+0.5cm@C+0.5cm{
\widetilde{M}(C_8/C_8)
\ar@(ul,ur)^{\trivgp}
\ar@/_0.pc/[d]|{R_{C_4}^{C_8}}
\\
\widetilde{M}(C_8/C_4)
\ar@(u,r)^{C_8/C_4}
\ar@/_0.pc/[d]|{R_{C_2}^{C_4}}
\\
\blue{M(C_8/C_2)}
\ar@[blue]@(u,r)^{\blue{C_8/C_2}}
\ar@[blue]@/_0.pc/[d]|{\blue{R_{C_1}^{C_2}}}
\ar@/^1.pc/[u]^{T_{C_2}^{C_4}}
\ar@/^3.5pc/[uu]^{T_{C_2}^{C_8}}
\\
\blue{M(C_8/C_1)}
\ar@[blue]@(dr,dl)^{\blue{C_8}}
}
\quad \oplus \quad
\xymatrix@R+0.5cm@C+0.5cm{
\blue{\hat{M}(C_8/C_8)}
\ar@[blue]@(ul,ur)^{\blue{\trivgp}}
\ar@[blue]@/_0.pc/[d]|{\blue{R_{C_4}^{C_8}}}
\\
\blue{\hat{M}(C_8/C_4)}
\ar@[blue]@(u,r)^{\blue{C_8/C_4}}
\ar@/_0.pc/[d]|{0}
\\
0
\ar@/_0.pc/[d]|{0}
\\
0
}
\]

\end{ex}

\begin{ex}
   Take the transfer system for \(C_8\) consisting of the identity maps and \(C_2 \to C_4\).
    This gives the following incomplete Burnside ring Mackey functor, which does not split.

\[
\xymatrix@R+0.5cm@C+0.5cm{
\bQ[C_8/C_8]
\ar@(ul,ur)^{\trivgp}
\ar@/_0.pc/[d]|{R_{C_4}^{C_8}}
\\
\bQ[C_4/C_4,C_4/C_2]
\ar@(u,r)^{C_8/C_4}
\ar@/_0.pc/[d]|{R_{C_2}^{C_4}}
\\
\bQ[C_2/C_2]
\ar@(u,r)^{C_8/C_2}
\ar@/_0.pc/[d]|{R_{C_1}^{C_2}}
\ar@/^1.pc/[u]^{T_{C_2}^{C_4}}
\\
\bQ[C_1/C_1]
\ar@(dr,dl)^{C_8}
}
\]

The maximal disk-like transfer system in the poset of transfer systems for \(C_8\) that is smaller than the transfer system of this example is the trivial one. Therefore there is no splitting of this incomplete Burnside ring Mackey functor, nor of this category of incomplete Mackey functors.
\end{ex}

In the next four examples we concentrate on the case \(G=C_6\). We will illustrate all the possible structures of incomplete Mackey functors and rationally which idempotents exist.  
We begin by looking at the disk-like transfer systems. 

\begin{ex}\label{ex:list_for_C6}
Let \(G=C_6\). 
We will use the abbreviation \(e_n\) to denote idempotent \(e_{C_n}\) in the \emph{complete} Burnside ring for \(C_6\). 
In every case the diagram illustrates the incomplete Mackey functor structure for the chosen transfer system, with only additive transfers drawn, as restriction and conjugation maps are always present. We abbreviate the values by \(\bullet\), but our convention is as in the previous examples for \(G=C_6\):
\[
\xymatrix@R0.5cm@C0.5cm{
& M(C_6/C_6)
\\
M(C_6/C_3)
&& 
M(C_6/C_2)
\\
&
M(C_6/C_1)
}
\]

\begin{enumerate}
\item Let \(\cO\) be the trivial transfer system, then there is no idempotent splitting. 
The \(\cO\)-admissible orbits are the trivial ones.

\[
\xymatrix@R0.5cm@C0.5cm{
& \bullet
\\
\bullet
&& 
\bullet
\\
&
\bullet
}
\]

\item Let \(\cO\) be the transfer system generated by \(C_3 \to C_6\) then the maximal splitting into idempotents is \(e_{[3]}=e_1+e_3, e_{[6]}=e_2+e_6\).
The \(\cO\)-admissible orbits are the trivial ones, \(C_6/C_3\) and \(C_2/C_1\).

\[
\xymatrix@R0.5cm@C0.5cm{
& \bullet
\\
\bullet
\ar[ur]
&& 
\bullet
\\
&
\bullet
\ar[ur]
}
\]

\item Let \(\cO\) be the transfer system generated by \(C_2 \to C_6\) then the maximal splitting into idempotents is \(e_{[2]}=e_1+e_2, e_{[6]}=e_3+e_6\).
The \(\cO\)-admissible orbits are the trivial ones, \(C_6/C_2\) and \(C_3/C_1\).

\[
\xymatrix@R0.5cm@C0.5cm{
& \bullet
\\
\bullet
&& 
\bullet
\ar[ul]
\\
&
\bullet
\ar[ul]
}
\]

\item Let \(\cO\) be the transfer system generated by \(C_1 \to C_6\) then the maximal splitting into idempotents is \(e_1, e_{[6]}=e_2+e_3+e_6\).
The \(\cO\)-admissible orbits are the trivial ones, \(C_6/C_1\), \(C_3/C_1\) and \(C_2/C_1\).

\[
\xymatrix@R0.5cm@C0.5cm{
& \bullet
\\
\bullet
&& 
\bullet
\\
&
\bullet
\ar[ul]
\ar[ur]
\ar[uu]
}
\]

\item Let \(\cO\) be the transfer system generated by \(C_2 \to C_6\) and  \(C_3 \to C_6 \) then the maximal splitting into idempotents is \(e_1, e_2, e_3, e_6\).
All orbits are \(\cO\)-admissible.

\[
\xymatrix@R0.5cm@C0.5cm{
& \bullet
\\
\bullet
\ar[ur]
&& 
\bullet
\ar[ul]
\\
&
\bullet
\ar[ul]
\ar[ur]
\ar[uu]
}
\]

\item Let \(\cO\) be the transfer system generated by \(C_1 \to C_6\) and  \(C_3 \to C_6 \) then the maximal splitting into idempotents is \(e_1, e_3, e_{[6]}= e_2+e_6\).
The \(\cO\)-admissible orbits are the trivial ones, \(C_6/C_3\), \(C_6/C_1\), \(C_3/C_1\) and \(C_2/C_1\).

\[
\xymatrix@R0.5cm@C0.5cm{
& \bullet
\\
\bullet
\ar[ur]
&& 
\bullet
\\
&
\bullet
\ar[ul]
\ar[ur]
\ar[uu]
}
\]

\item Let \(\cO\) be the transfer system generated by \(C_1 \to C_6\) and  \(C_2 \to C_6 \) then the maximal splitting into idempotents is \(e_1, e_2, e_{[6]}=e_3+e_6\).
Admissible orbits are the trivial ones, \(C_6/C_2\), \(C_6/C_1\), \(C_3/C_1\) and \(C_2/C_1\).

\[
\xymatrix@R0.5cm@C0.5cm{
& \bullet
\\
\bullet
&& 
\bullet
\ar[ul]
\\
&
\bullet
\ar[ul]
\ar[ur]
\ar[uu]
}
\]

\end{enumerate}
\end{ex}

\begin{ex}
    There are three more transfer systems for \(C_6\) that are not disk-like. 
    These cases have no splittings. 

\[
\xymatrix@R0.5cm@C0.5cm{
& \bullet
\\
\bullet
&& 
\bullet
\\
&
\bullet
\ar[ur]
}
\quad  \quad \quad
\xymatrix@R0.5cm@C0.5cm{
& \bullet
\\
\bullet
&& 
\bullet
\\
&
\bullet
\ar[ul]
}
\quad \quad \quad
\xymatrix@R0.5cm@C0.5cm{
& \bullet
\\
\bullet
&& 
\bullet
\\
&
\bullet
\ar[ur]
\ar[ul]
}
\]

\end{ex}

Notice that we never have a splitting given by \(e_1+e_6, e_2, e_3\), nor \(e_1+e_2+e_3,e_6\). Such splittings cannot happen, see Corollary \ref{cor: Relative Family} for the first case and Corollary \ref{cor:inseperability_representatives} for the second one.

We now proceed to analysing the splitting in two cases from Example \ref{ex:list_for_C6}.

\begin{ex}
    For number (4) in Example \ref{ex:list_for_C6}, we illustrate the splitting of a general Mackey functor. The structure of a general Mackey functor that we need to remember after splitting is indicated in  green below. We put restriction maps in these diagrams (going down) only if they need to be remembered. Similarly, the conjugation action is not marked explicitly, but it needs to be remembered on the  green values. 

\[
\xymatrix@R0.5cm@C0.5cm{
& \bullet
\\
\bullet
&& 
\bullet
\\
&
\bullet
\ar[ul]
\ar[ur]
\ar[uu]
}
\quad = \quad 
\xymatrix@R0.5cm@C0.5cm{
& \bullet
\\
\bullet
&& 
\bullet
\\
&
\green{\bullet}
\ar[ul]
\ar[ur]
\ar[uu]
} 
\quad \oplus \quad
\xymatrix@R0.5cm@C0.5cm{
&\green{ \bullet}
\ar@[ForestGreen][dr]
\ar@[ForestGreen][dl]
\\
\green{\bullet}
&& 
\green{\bullet}
\\
&
0
}
\]
\end{ex}

\begin{ex}
    For number (6) in Example \ref{ex:list_for_C6}, we illustrate the splitting. The structure of a general Mackey functor that we need to remember after splitting is indicated in green below. We put restriction maps in these diagrams (going down) only if they need to be remembered. Similarly, the conjugation action is not marked explicitly, but it needs to be remembered on the green values. 

\[
\xymatrix@R0.5cm@C0.5cm{
& \bullet
\\
\bullet
\ar[ur]
&& 
\bullet
\\
&
\bullet
\ar[ul]
\ar[ur]
\ar[uu]
}
\quad = \quad 
\xymatrix@R0.5cm@C0.5cm{
& \bullet
\\
\bullet
&& 
\bullet
\\
&
\green{\bullet}
\ar[ul]
\ar[ur]
\ar[uu]
}
\quad \oplus \quad
\xymatrix@R0.5cm@C0.5cm{
& \bullet
\\
\green{\bullet}
\ar[ur]
&& 
0
\\
&
0
}
\quad \oplus \quad
\xymatrix@R0.5cm@C0.5cm{
& \green{\bullet}
\ar@[ForestGreen][dr]
\\
0
&& 
\green{\bullet}
\\
&
0
}
\]
\end{ex}

We now illustrate the difference between the split pieces when we use a disk-like transfer system and a non-disk-like one. The statement of the result is the same, but in both cases \([H]^{\cO}\mhyphen\Mackey^G_\Q\) denote different categories, depending on the transfer systems. 

\begin{ex}\label{ex:difference_disklike-non-disklike}

Let \(G=C_8\) and \(\cO\) be the following transfer system, as in Example \ref{ex:transfers_withinClasses}.

\[
\xymatrix@R+0cm@C+0cm{
C_1 
\ar@/^0.pc/[r]
\ar@/_1.pc/[rr]
\ar@/_2.pc/[rrr]
&
C_2
\ar@/^0.pc/[r]
&
C_4 
&
C_8
}
\]

\vspace{1cm}

Again, the maximal disk-like transfer system \(\cO^d \subseteq \cO\) is the following.

\[
\xymatrix@R+0cm@C+0cm{
C_1 
\ar@/^0.pc/[r]
\ar@/_1.pc/[rr]
\ar@/_2.pc/[rrr]
&
C_2
&
C_4 
&
C_8
}
\]

\vspace{1cm}

Thus, in both cases we have two inseparability classes of subgroups, \([C_1]=\{C_1\}\) and \([C_8]=\{C_2,C_4,C_8\}\) and therefore a splitting given by two idempotents \(e_{[1]}=e_1\) and \(e_{[8]}=e_2+e_4+e_8\).

Consider first an \(\cO\)-Mackey functor \(M\) and its splitting.

\[
\xymatrix@R+0.5cm@C+0.5cm{
M(C_8/C_8)
\ar@(ul,ur)^{\trivgp}
\ar@/_0.pc/[d]|{R_{C_4}^{C_8}}
\\
M(C_8/C_4)
\ar@(u,r)^{C_8/C_4}
\ar@/_0.pc/[d]|{R_{C_2}^{C_4}}
\\
M(C_8/C_2)
\ar@(u,r)^{C_8/C_2}
\ar@/_0.pc/[d]|{R_{C_1}^{C_2}}
\ar@/^1.5pc/[u]|{T_{C_2}^{C_4}}
\\
M(C_8/C_1)
\ar@(dr,dl)^{C_8}
\ar@/^1.5pc/[u]|{T_{C_1}^{C_2}}
\ar@/^3.2pc/[uu]|{T_{C_1}^{C_4}}
\ar@/^4.5pc/[uuu]^{T_{C_1}^{C_8}}
}
\quad = \quad
\xymatrix@R+0.5cm@C+0.5cm{
\widetilde{M}(C_8/C_8)
\ar@(ul,ur)^{\trivgp}
\ar@/_0.pc/[d]|{R_{C_4}^{C_8}}
\\
\widetilde{M}(C_8/C_4)
\ar@(u,r)^{C_8/C_4}
\ar@/_0.pc/[d]|{R_{C_2}^{C_4}}
\\
\widetilde{M}(C_8/C_2)
\ar@(u,r)^{C_8/C_2}
\ar@/_0.pc/[d]|{R_{C_1}^{C_2}}
\ar@/^1.5pc/[u]|{T_{C_2}^{C_4}}
\\
\blue{M(C_8/C_1)}
\ar@[blue]@(dr,dl)^{\blue{C_8}}
\ar@/^1.5pc/[u]|{T_{C_1}^{C_2}}
\ar@/^3.2pc/[uu]|{T_{C_1}^{C_4}}
\ar@/^4.5pc/[uuu]^{T_{C_1}^{C_8}}
}
\quad \oplus \quad
\xymatrix@R+0.5cm@C+0.5cm{
\blue{\hat{M}(C_8/C_8)}
\ar@[blue]@(ul,ur)^{\blue{\trivgp}}
\ar@[blue]@/_0.pc/[d]|{\blue{R_{C_4}^{C_8}}}
\\
\blue{\hat{M}(C_8/C_4)}
\ar@[blue]@(u,r)^{\blue{C_8/C_4}}
\ar@[blue]@/_0.pc/[d]|{\blue{R_{C_2}^{C_4}}}
\\
\blue{\hat{M}(C_8/C_2)}
\ar@[blue]@(u,r)^{\blue{C_8/C_2}}
\ar@/_0.pc/[d]|{0}
\ar@[blue]@/^1.5pc/[u]|{\blue{T_{C_2}^{C_4}}}
\\
0
}
\]

Now consider an \(\cO^d\)-Mackey functor \(N\) and its splitting.

\[
\xymatrix@R+0.5cm@C+0.5cm{
N(C_8/C_8)
\ar@(ul,ur)^{\trivgp}
\ar@/_0.pc/[d]|{R_{C_4}^{C_8}}
\\
N(C_8/C_4)
\ar@(u,r)^{C_8/C_4}
\ar@/_0.pc/[d]|{R_{C_2}^{C_4}}
\\
N(C_8/C_2)
\ar@(u,r)^{C_8/C_2}
\ar@/_0.pc/[d]|{R_{C_1}^{C_2}}
\\
N(C_8/C_1)
\ar@(dr,dl)^{C_8}
\ar@/^1.5pc/[u]|{T_{C_1}^{C_2}}
\ar@/^3.2pc/[uu]|{T_{C_1}^{C_4}}
\ar@/^4.5pc/[uuu]^{T_{C_1}^{C_8}}
}
\quad = \quad
\xymatrix@R+0.5cm@C+0.5cm{
\widetilde{N}(C_8/C_8)
\ar@(ul,ur)^{\trivgp}
\ar@/_0.pc/[d]|{R_{C_4}^{C_8}}
\\
\widetilde{N}(C_8/C_4)
\ar@(u,r)^{C_8/C_4}
\ar@/_0.pc/[d]|{R_{C_2}^{C_4}}
\\
\widetilde{N}(C_8/C_2)
\ar@(u,r)^{C_8/C_2}
\ar@/_0.pc/[d]|{R_{C_1}^{C_2}}
\\
\blue{N(C_8/C_1)}
\ar@[blue]@(dr,dl)^{\blue{C_8}}
\ar@/^1.5pc/[u]|{T_{C_1}^{C_2}}
\ar@/^3.2pc/[uu]|{T_{C_1}^{C_4}}
\ar@/^4.5pc/[uuu]^{T_{C_1}^{C_8}}
}
\quad \oplus \quad
\xymatrix@R+0.5cm@C+0.5cm{
\blue{\hat{N}(C_8/C_8)}
\ar@[blue]@(ul,ur)^{\blue{\trivgp}}
\ar@[blue]@/_0.pc/[d]|{\blue{R_{C_4}^{C_8}}}
\\
\blue{\hat{N}(C_8/C_4)}
\ar@[blue]@(u,r)^{\blue{C_8/C_4}}
\ar@[blue]@/_0.pc/[d]|{\blue{R_{C_2}^{C_4}}}
\\
\blue{\hat{N}(C_8/C_2)}
\ar@[blue]@(u,r)^{\blue{C_8/C_2}}
\ar@/_0.pc/[d]|{0}
\\
0
}
\]

Notice that in both cases the splitting is given by the same idempotents, but the split pieces differ. In the case \(\cO^d\) the blue data for the idempotent piece for \(e_{[8]}=e_2+e_4+e_4\) no longer includes the map \(T_{C_2}^{C_4}\). The blue data for idempotent piece for \(e_1\) is the same in both cases.  
\end{ex}

\doublespacing
\begin{theindex}

  \item \(\cA^{\cO}\), \hyperpage{6}, \hyperpage{18}
  \item \(\cA^{\cO}_{(H)}\), \hyperpage{18, 19}
  \item \(\mA^{\cO}\), \hyperpage{6}, \hyperpage{8}
  \item \(\AboveHMackey[{[H]^{\cO}}]^G\), \hyperpage{18}, 
		\hyperpage{20}, \hyperpage{23--25}, \hyperpage{27}

  \indexspace

  \item \(\Cl^{\cO}(G;\Z)\), \hyperpage{8}, \hyperpage{10}, 
		\hyperpage{12}
  \item \(\Coeff_{\langle H\rangle^{\cO}}^{N}\), \hyperpage{29, 30}
  \item \(\CoIndHO\), \hyperpage{19, 20}, \hyperpage{24}, 
		\hyperpage{26}
  \item \(\cotr_{L_g^H}^{L}(m)\), \hyperpage{24}

  \indexspace

  \item \(e_{[H]^{\cO}}\), \hyperpage{8}, \hyperpage{17}, 
		\hyperpage{28}

  \indexspace

  \item \(\Fin^G\), \hyperpage{5}, \hyperpage{18}
  \item \(\Fin^{G}_{(H)}\), \hyperpage{18}
  \item \(\Fin^{G}_{\cO}\), \hyperpage{6}

  \indexspace

  \item \([H]^\cO\), \hyperpage{11}, \hyperpage{13--15}, \hyperpage{29}
  \item \([H]^\cO\mhyphen\Mackey^G\), \hyperpage{19}, \hyperpage{25}, 
		\hyperpage{29}
  \item \([H]^\cO\mhyphen\Mackey^G_\Q\), \hyperpage{27, 28}
  \item \(\langle H\rangle^{\cO}\mhyphen\Mackey^N\), \hyperpage{30}
  \item \(H_{\cO}(-)\), \hyperpage{10--12}

  \indexspace

  \item \(\IndHO\), \hyperpage{19, 20}, \hyperpage{26}
  \item \(\iota^{\cO}\), \hyperpage{12}

  \indexspace

  \item \(L\geq [H]^{\cO}\), \hyperpage{15--17}, \hyperpage{23}, 
		\hyperpage{25}
  \item \(L_g^{H}\), \hyperpage{16, 17}, \hyperpage{26}

  \indexspace

  \item \(\mu^{\cO}(K,H)\), \hyperpage{9}

  \indexspace

  \item \(\OMackey^G\), \hyperpage{6}
  \item \(\OMackey^G_\Q\), \hyperpage{28}, \hyperpage{30}
  \item \(\OMackey^G_{\mathcal F(H)}\), \hyperpage{18}
  \item \(\cO\), \hyperpage{5}
  \item \(\cO^d\), \hyperpage{14}
  \item \(\Orb^N_{\langle H\rangle^{\cO}}\), \hyperpage{29}
  \item \(\Orb^{G}_{[H]^{\cO}}\), \hyperpage{29}

  \indexspace

  \item \(\restrict{}{[H]^{\cO}}\), \hyperpage{19, 20}, \hyperpage{26}

  \indexspace

  \item \(\Sub(G)\), \hyperpage{5}, \hyperpage{13}
  \item \(\SubOG\), \hyperpage{7}, \hyperpage{11}
  \item \(\Sub_{\langle H\rangle}^{\cO}\), \hyperpage{13}, 
		\hyperpage{16}, \hyperpage{29}

  \indexspace

  \item \(T_{L\geq [H]^{\cO}}\), \hyperpage{16, 17}, \hyperpage{20}, 
		\hyperpage{24}
  \item \(\indtr_{L_g^H}^{L}(m)\), \hyperpage{22}

\end{theindex}

\singlespacing


\end{document}